\renewcommand{\thefootnote}{\arabic{footnote}}
\newtheorem{theorem}{\color{black}\indent Theorem}[section]
\newtheorem{lemma}{\color{black}\indent Lemma}[section]
\newtheorem{definition}{\color{black}\indent Definition}[section]
\newtheorem{remark}{\color{black}\indent Remark}[section]
\numberwithin{equation}{section}
\newcommand\blfootnote[1]{%
	\begingroup
	\renewcommand\thefootnote{}\footnote{#1}%
	\addtocounter{footnote}{-1}%
	\endgroup
}
\journal{
}
\begin{document}
\begin{frontmatter}
\title{ Stochastic tamed 3D Navier–Stokes equations with locally weak monotonicity coefficients: existence, uniqueness and  averaging principle }

\author{{ \blfootnote{$^{*}$Corresponding author } Shuaishuai Lu$^{a}$ \footnote{ E-mail address : luss23@mails.jlu.edu.cn}
		,~ Xue Yang$^{a}$}  \footnote{E-mail address : xueyang@jlu.edu.cn},~ Yong Li$^{a,b,*}$  \footnote{E-mail address : liyong@jlu.edu.cn}\\
	{$^{a}$College of Mathematics, Jilin University,} {Changchun 130012, P. R. China.}\\
	{$^{b}$Center for Mathematics and Interdisciplinary Sciences, Northeast Normal University,}
	{Changchun 130024, P. R. China.}
}

\begin{abstract}
This paper investigates the stochastic tamed 3D Navier–Stokes equations with locally weak monotonicity coefficients in the whole space as well as in the three-dimensional torus, which play a crucial role in turbulent flows analysis. A significant issue is addressed in this work, specifically, the reduced  regularity of the coefficients and the inapplicability of Gronwall's lemma complicates the establishment of  pathwise uniqueness for weak solutions. Initially, the existence of a martingale solution for the system is established via Galerkin approximation; thereafter,  the pathwise uniqueness of this martingale solution is confirmed by constructing a specialized control function. Ultimately, the Yamada-Watanabe theorem is employed to establish the existence and uniqueness of the strong solution to the system. Furthermore, an averaging principle, referred to as the first Bogolyubov theorem, is established for stochastic tamed 3D Navier–Stokes equations with highly oscillating components, where the coefficients  satisfy the assumptions of linear growth and locally weak monotonicity. This result is achieved using classical Khasminskii time discretization, which illustrates the convergence of the solution from the original Cauchy problem to the averaged equation over a finite interval  [0, T].
~\\
~\\
 \textbf{keywords}: Averaging principle; Locally weak monotonicity; Stochastic tamed 3D Navier–Stokes equations; Yamada-Watanabe
theorem
 ~\\
 ~\\
  \textbf{MSC codes}: 35Q30, 60H15, 70K65
\end{abstract}
\end{frontmatter}
\section{\textup{Introduction}}
The Navier-Stokes systems represent fundamental equations in fluid mechanics, describing the motion of incompressible fluids. When randomness is introduced, as observed in atmospheric turbulence and uncertainties in financial markets, these  equations are reformulated as stochastic Navier-Stokes equations to describe turbulent incompressible flows, typically represented as follows.
\begin{align}\label{o111}
\begin{cases}
\text{d}u =[\nu \Delta  u-( u,\nabla  )  u+\nabla p_{1}+f]\text{d}t+[(\mathcal{K} \cdot \nabla )u+\nabla p_{2}+g]\text{d}W(t),\\
 \text{div} u(t)=0,\quad u(0)=u_{0},
 \end{cases}
	\end{align}
where $u$ is the unknow velocity field, $p_{1}$ and $p_{2}$ are the pressure, $\nu>0$ is the kinematic viscosity, $f$ is the external driving force, and $W(t)$ is  a cylindrical Wiener process, which reflects the influence of random factors in the environment on fluid motion. Furthermore, $\mathcal{K}$ is used to model turbulence. This equation illustrates that the fluid flow velocity varies over time, influenced by external forces and random perturbations. The stochastic 2D Navier-Stokes equations have been extensively studied, with  details available \cite{ref10,ref11,ref12,ref13,ref15,ref59,ref60,ref61,ref62,ref999} and among others. In three-dimensional scenarios, the classical Navier-Stokes equations are characterized by their nonlinearity and high complexity, which pose significant challenges, particularly in the study of turbulence. Specifically, the complexities related to  the nonlinear convection term, the failure of the Sobolev embedding theorem in higher dimensions, and the uniqueness of solution paths represent notable challenges. Collectively, these factors  present considerable challenges in addressing the stochastic three-dimensional Navier-Stokes equations. In view of this, Flandoli, Gatarek \cite{ref16} and Mikulevicius, Rozovskii \cite{ref17} established the existence of martingales solutions and steady-state solutions. Concurrently, in \cite{ref18,ref19}, Hofmanov$\acute{\text{a}} $ et al. demonstrated the non-uniqueness of solutions of the classical stochastic 3D Navier-Stokes system. Further results pertaining to the stochastic three-dimensional Navier-Stokes equations can be found in references \cite{ref52,ref53,ref55,ref56,ref57,ref63,ref65,ref66,ref69,ref70} and the associated literature.

Consequently,  the tame term is introduced, which holds considerable significance for the stochastic 3D Navier-Stokes equations in both theoretical and practical contexts. The taming technique is a modification of the Navier-Stokes equations designed to prevent solution blow-up by mitigating nonlinear effects in high-energy regions. The stochastic tamed  3D Navier-Stokes equation was first proposed by R$\ddot{\text{o}} $ckner and Zhang in \cite{ref1} and is defined as follows:
\begin{align}\label{o222}
\begin{cases}
\text{d}u =[\nu \Delta  u-( u,\nabla  )  u+\nabla p_{1}-\Psi _{N}(\left | u \right |^{2} ) u+f]\text{d}t+[(\mathcal{K} \cdot \nabla )u+\nabla p_{2}+g]\text{d}W(t),\\
 \text{div} u(t)=0,\quad u(0)=u_{0},
 \end{cases}
	\end{align}
where the tamed function $\Psi _{N}:\mathbb{R} ^{+}\to \mathbb{R} ^{+}$ is smooth and satisfies, for some $N\in \mathbb{N} $:
\begin{align*}
		\left\{\begin{matrix}
  \Psi _{N}(z)=0,& z\le N,\\
  \Psi _{N}=\frac{z-N}{\nu}, & z\ge N+1,\\
  0\le  \Psi' _{N}(z)\le \frac{2}{\nu \wedge 1},  &z\ge 0,
\end{matrix}\right.
	\end{align*}
and the notation $m\vee n$ denotes $\max\{m,n\} $ and  $m\wedge n$  signifies $\min\{m,n\} $. It is known that if a bounded smooth solution exists for the classical stochastic 3D Navier-Stokes equations, one can identify a sufficiently large $N$ to construct the tame term $\Psi _{N}$ such that the solution complies with equation \eqref{o222}. Conversely, by choosing $N$ sufficiently large, one can investigate the existence of weak solutions for system \eqref{o111}. Thus, another significant motivation for studying stochastic tame systems is to elucidate certain properties of system  \eqref{o111} through the analysis of system \eqref{o222}. The stochastic tame 3D Navier-Stokes  equations have been extensively investigated in the existing literature. For instance, in \cite{ref1}, R$\ddot{\text{o}} $ckner and Zhang proved the uniqueness of the strong solution of system \eqref{o222} under both full space and periodic boundary conditions, and  also demonstrated the uniqueness of the invariant measure in the context of periodic boundary conditions and degenerate additive noise. Subsequently, R$\ddot{\text{o}} $ckner et al. \cite{ref21} employed the weak convergence method to prove a Freidlin-Wentzell-type large deviation principle for \eqref{o222} driven by multiplicative noise. For the small time large deviation principle, R$\ddot{\text{o}} $ckner and Zhang  \cite{ref5} conducted further research based on Galerkin approximation method. Recently,  for stochastic tame 3D Navier-Stokes equations with fast oscillations, Hong et al. \cite{ref27} established a strong averaging principle  and demonstrated a Freidlin-Wentzell-type large deviation principle. Additional relevant properties regarding the 3D tame Navier-Stokes equations can also be found in references \cite{ref23,ref25,ref28,ref58}.
	
It is noteworthy that the aforementioned literature is established on the assumption that the diffusion and drift coefficients of the system satisfy the smoothness conditions such as Lipschitz continuity. However,  in practical fluid dynamics models, particularly within heterogeneous media or those characterized by high complexity (e.g., multiphase flow, flows with obstacles), the properties of the fluid (such as density and viscosity) frequently exhibit irregularities and lack smoothness. This implies that the formation and evolution of turbulence often rely on these irregular boundary conditions and the properties of the medium. The introduction of  locally weak monotonicity coefficients facilitates a more accurate characterization of these irregularities and non-uniformities, rendering the turbulence model more appropriate for actual complex environments.  In recent years, there has been an increasing interest in the development of stochastic models that exhibit non-Lipschitz continuity. For example, Fang and Zhang \cite{ref29} studied  stochastic  differential equations characterized by non-Lipschitz coefficients, specifically investigating the existence and uniqueness of strong solutions. In \cite{ref30}, Kulik and Scheutzow established the weak uniqueness of weak solutions for a class of stochastic functional differential equations with H$\ddot{\text{o}}$lder continuous coefficients through the method of generalized coupling. Han \cite{ref67} established the well-posedness and small mass limit for the stochastic wave equation with H$\ddot{\text{o}}$lder noise coefficient. Furthermore, R$\ddot{\text{o}} $ckner et al. \cite{ref31}  investigated  the averaging principle of semilinear slow-fast stochastic partial differential equations with additive noise, employing the Poisson equation in Hilbert space under the assumption of H$\ddot{\text{o}}$lder continuity for the fast variables. In \cite{ref999}, Agresti and Veraar considered H$\ddot{\text{o}}$lder coefficients for the transport noise, and obtained local well-posedness  for
the true Navier-Stokes equations. For further relevant properties, please refer to  \cite{ref32,ref33,ref35,ref38,ref39,ref99911,ref99922} and references therein.

To the best of our knowledge, the problem of stochastic tamed 3D Navier–Stokes equations with locally weak
monotonicity coefficients has not been studied in earlier works. Therefore, one of the primary objectives of this study is to establish the existence of a unique strong solution for system \eqref{o222}, under the condition that  the coefficients $f$ and  $g$ satisfy locally weak
monotonicity and linear growth conditions. The existing literature suggests that path uniqueness plays a crucial role in determining the well-posedness of (strong) solutions in stochastic differential systems. A classical result demonstrates that under Lipschitz conditions, the Gronwall lemma can be employed to establish the path uniqueness of weak solutions.  However, when the regularity of the coefficients falls below Lipschitz conditions, particularly under irregular conditions such as locally weak
monotonicity, this presents a significant challenge, as Gronwall lemma are not applicable. Consequently,  in the first step of this study, the Galerkin projection technique is employed to transform system \eqref{o222}  into a finite-dimensional system, thereby preliminarily establishing the existence of a weak solution for the finite-dimensional system. Subsequently, by constructing a specialized control function, the path uniqueness of the weak solution is established via a proof by contradiction. Furthermore, the Yamada-Watanabe theorem is employed to ascertain the existence and uniqueness of strong solutions. In the second step, referring to the conclusion (Theorem 3.8) in \cite{ref1}, the martingale solution of system \eqref{o222}  is derived by approximating the finite-dimensional strong solution, and similarly, the pathwise uniqueness  is established. Ultimately, the existence of a unique strong solution to the stochastic tamed  3D Navier-Stokes equations with locally weak monotonicity coefficients is  established through the Yamada-Watanabe theorem.

In traditional fluid dynamics, especially  when dealing with complex turbulent phenomena, the instantaneous behavior of the flow field is frequently intricate, exhibiting significant oscillatory components. These high-frequency oscillations pose considerable challenges for the direct analysis and simulation of system properties. Thus, it is very important to find a simplified system that can effectively control  the evolution of the  original system over long time scales. In this context, the averaging principle is an effective method. A fundamental aspect of the averaging principle is to ``average out" the highly oscillatory components under suitable conditions, thereby yielding an averaged system that, in a certain sense, replaces the original system. The concept of the averaging principle can be traced back to Bogoliubov's research on nonlinear  oscillation in deterministic systems, where Bogoliubov and Mitropolsky \cite{ref50} provided the first rigorous proof of the averaging principle within deterministic finite-dimensional systems. Subsequently Khasminskii \cite{ref51} extended the notion of the averaging principle to stochastic differential equations. The averaging principle for both finite and infinite-dimensional stochastic systems  has been extensively investigated in recent decades, building upon the foundational work  of Khasminskii. The interested readers might see \cite{ref68,ref71,ref72,ref73,ref75,ref9,ref76,ref78,ref79} and references therein for this direction.

Therefore, based on the above consideration, by systematically applying the averaging principle to the stochastic tamed  3D Navier-Stokes system, we can derive the statistical properties of the fluid and reveal its stationary state under random perturbations. The averaging principle provides a novel mathematical framework for elucidating and characterizing the global behavior of turbulence, while also facilitating a deeper understanding of some properties of stochastic 3D Navier-Stokes equations. Concurrently, it is recognized  that in numerous practical fluid dynamics problems, the physical characteristics of the fluid are frequently non-uniformly distributed. The locally weak
monotonicity conditions enable  the equation to adapt to the characteristics of the inhomogeneous medium. This adaptability allows  the analysis of the average behavior of the fluid within the inhomogeneous medium, thereby revealing the influence of irregularities on the statistical characteristics of the fluid. The second objective of this study is to establish an averaging principle for the following systems:
\begin{align}\label{o333}
		\begin{cases}
 \text{d}u^{\varepsilon}(t)=[\eta _{1}(\frac{t }{\varepsilon } ) \Delta  u-\eta _{2}(\frac{t }{\varepsilon })[( u,\nabla  )  u+\Psi _{N}(\left | u \right |^{2} ) u]+\nabla p_{1}+f(\frac{t }{\varepsilon },u)]\text{d}t\\~~~~~~~~~~+[(\mathcal{K} (\frac{t }{\varepsilon },x)\cdot \nabla )u+\nabla p_{2}+g(\frac{t }{\varepsilon },u)]\text{d}W(t),\\
 \text{div} u^{\varepsilon}(t)=0,\quad u(0)=u^{\varepsilon}_{0},
\end{cases}
	\end{align}
where  $\varepsilon$ is a small parameter and coefficients $f$, $g$   satisfy the conditions of  linear growth and locally weak
monotonicity with respect to $u$. Consequently, a fundamental question arises: what is the asymptotic behavior of the solution to the averaged system \eqref{o333} as the time scale $\varepsilon$ approaches zero? This is a challenging task because the locally weak
monotonicity coefficients can be quite irregular and the Gronwall lemma are not applicable. Therefore, we will continue to utilize a specialized control function and apply a proof by contradiction. In comparison to the existing conclusions, besides relaxing the coefficients to locally weak
monotonicity, we also consider the addition of these terms $\eta _{1}(\frac{t }{\varepsilon } )$, $\eta _{2}(\frac{t }{\varepsilon }) $, $\mathcal{K} (\frac{t }{\varepsilon },x)$, thereby enhancing the robustness of the turbulence model. However, this also introduces some difficulties. To address these problems, we will employ the classical Khasminskii time discretization method to establish the averaging principle for systems \eqref{o333}.  Under appropriate  conditions, we   derive the following conclusions:
\begin{align*}
		\lim_{\varepsilon  \to 0} \mathbb{E}\underset{t\in [0,T]}{\sup}   \left \|u^{\varepsilon }(t;u_{0} ^{\varepsilon }) -u^{* }(t;u_{0} ^{*}) \right \|^{2}_{\mathcal{H} ^{0}}=0,
	\end{align*}
for all  $T > 0$ provided that $\lim_{\varepsilon  \to 0} \mathbb{E} \left \|u_{0} ^{\varepsilon } -u_{0}  ^{*} \right \|^{2}_{\mathcal{H} ^{1}}=0$. Here $u^{* }$ denotes  the solution of the corresponding averaged equation (for
a detailed discussion, please refer to Theorem \ref{th2} below). If $\eta _{1}(\frac{t }{\varepsilon } )=1$,  $\eta _{2}(\frac{t }{\varepsilon }) =1$ and $\mathcal{K} (t ,x)=0$, we can derive a further assertion in the first-order Sobolev space, i.e, for all  $T > 0$, when $\lim_{\varepsilon  \to 0} \mathbb{E} \left \|u_{0} ^{\varepsilon } -u_{0}  ^{*} \right \|^{2}_{\mathcal{H} ^{1}}=0$,
\begin{align*}
		\lim_{\varepsilon  \to 0} \mathbb{E}\underset{t\in [0,T]}{\sup}   \left \|u^{\varepsilon }(t;u_{0} ^{\varepsilon }) -u^{* }(t;u_{0} ^{*}) \right \|^{2}_{\mathcal{H}^{1}}=0.
	\end{align*}

The remainder of this paper is structured as follows. In Section 2, we introduce  some standard notations and results and  necessary estimates. In Section 3, employing Galerkin projection techniques,  we rigorously prove the existence and uniquess of strong solutions for stochastic tamed 3D Navier-Stokes equations pertaining to turbulent incompressible flows characterized by locally weak
monotonicity coefficients. In Section 4, within the framework of the locally weak
monotonicity conditions, we establish the strong averaging principle for the stochastic tamed 3D Navier-Stokes equations with highly oscillating components in  spaces $\mathcal{H}^{0}$ and $\mathcal{H}^{1}$, which can be viewed as the functional
law of large numbers.

	\section{\textup{Preliminaries}}
	Let $\mathbb{D}=\mathbb{R} ^{3}$ or $\mathbb{T}^{3}$(representing the standard torus) and $C^{\infty }_{0}(\mathbb{D};\mathbb{R} ^{3}  )$ denote the set of all smooth functions from $\mathbb{D}$ to $\mathbb{R} ^{3}$ with compact
supports.  We write $L^{p}(\mathbb{D}; \mathbb{R} ^{3} )(p\ge 1)$ to be the vector valued $L^{p}$-space in $\mathbb{D}$, whose norm is denoted by $\left \| \cdot  \right \| _{L^{p}}$. Specifically, when $p=2$, the space $L^{2}(\mathbb{D};\mathbb{R} ^{3}  )$ forms a real Hilbert space equipped with an inner product: $$\left \langle u,v \right \rangle _{L^{2}}=\int _{\mathbb{D} }u(x)v^{T}(x)\text{d}x.$$
The corresponding norm is given by  $\left \| u \right \| _{L^{2}}=\left \langle u,u \right \rangle _{L^{2}}$, where $v^{T}$ denotes the transposition of the row vector $v$. Let $W^{m,2}(\mathbb{D};\mathbb{R} ^{3}  )$ represent the Sobolev space on $\mathbb{D}$, equipped with the norm $$\left \| u \right \| _{W^{m,2}}^{2}=\int _{\mathbb{D} }\left | (I-\Delta )^{\frac{m}{2} }u(x) \right |\text{d}x,$$
where $m=0,1,2,...$, and $(I-\Delta )^{\frac{m}{2} }$ is defined by Fourier transformation. Define $$\mathcal{H}^{m} =\left \{ u\in W^{m,2}(\mathbb{D};\mathbb{R}^{2}  ):\text{div}u=0 \right \}, $$
where ``div" denotes the divergence operator. Then the $(\mathcal{H}^{m},\left \| \cdot  \right \| _{\mathcal{H}^{m}})$ represents a separable Hilbert space.  Let $\left ( \Omega ,\mathscr{F},\mathbb{P} \right )$ be a complete probability space, equipped with a filtration  $\lbrace\mathscr{F}_{t}\rbrace_{t\ge0}$ that satisfies the usual conditions. We consider the following the stochastic tamed 3D Navier–Stokes equations:
\begin{align}\label{r1}
		\begin{cases}
 \text{d}u(t)=[\nu \Delta  u-( u,\nabla  )  u+\nabla p_{1}(t,x)-\Psi _{N}(\left | u \right |^{2} ) u+f(t,u)]\text{d}t\\~~~~~~~~~~+[(\mathcal{K} (t,x)\cdot \nabla )u+\nabla p_{2}(t,x)+g(t,u)]\text{d}W(t),\\
 \text{div} u(t)=0,\quad u(0)=u_{0},
\end{cases}
	\end{align}
 where $\nu>0$ represents the kinematic viscosity and  $p_{1}$, $p_{2}$ are unknown scalar functions. Additionally, let $W(t)$ be a  cylindrical Wiener processes  on a separable Hilbert space $K$ with respect to a complete
filtered probability space $\left ( \Omega ,\mathscr{F},\mathbb{P} \right )$. For a fixed time horizon $T$, $$f:[0,T]\times L^{2}(\mathbb{D} )\times \Omega\to L^{2}(\mathbb{D} ),\quad \mathcal{K} :[0,T]\times \mathbb{D}    \to \mathscr{L}(K,L^{2}(\mathbb{D} )) \quad g:[0,T]\times L^{2}(\mathbb{D} )\times \Omega  \to \mathscr{L}(K,L^{2}(\mathbb{D} )),$$
are measurable mappings, where $\mathscr{L}(K,L^{2}(\mathbb{D} ))$ denotes   the space of all Hilbert-Schmidt operators from $K$ into $L^{2}(\mathbb{D} )$.

Let $\Pi $ denote the orthogonal projection from $L^{2}(\mathbb{D} )$ to $\mathcal{H}^{0}$. It is well known that $\Pi $ commutes with the differential  operators.  We define the Stokes operator $S_{1}$  as follows:
$$S_{1}:W^{2,2}(\mathbb{D};\mathbb{R}^{2}  )\cap \mathcal{H}^{1}\to \mathcal{H}^{0}, \quad S_{1}(u)=\Pi\Delta  u$$
and define
$$S^{1}_{2}:\mathcal{D}(S_{2})\subset \mathcal{H}^{0}\times \mathcal{H}^{1}\to \mathcal{H}^{0}, \quad S_{2}(u)=S^{1}_{2}(u,u)+\Pi\Psi _{N}(\left | u \right |^{2} ) u,\quad S^{1}_{2}(u,v)=\Pi( u,\nabla  )  v. $$
Applying the operator $\Pi$ to both sides of  \eqref{r1}, the Navier–Stokes equations can be reformulated in the following abstract form:
\begin{align}\label{t1}
		\begin{cases}
 \text{d}u(t)=[\nu S_{1}(u)-S_{2}(u)+F(t,u)]\text{d}t+G(t,u)\text{d}W(t),\\
  u(0)=u_{0},
\end{cases}
	\end{align}
where $F(t,u):=\Pi f(t,u)$ and $G(t,u):=\Pi (\mathcal{K} (t,x)\cdot \nabla )u+\Pi g(t,u)$. Hence. for any $u\in \mathcal{H}^{0}$ and $v\in L^{2}(\mathbb{D} )$,  $$\left \langle u,v \right \rangle _{\mathcal{H}^{0} }:=\left \langle u,\Pi v \right \rangle_{\mathcal{H}^{0} } =\left \langle u,v  \right \rangle_{L^{2}} .$$
\section{\textup{Existence and uniqueness }}
Let $\mathcal{B}(X)$ denote the $\sigma $-algebra generated by space $X$, $\mathcal{P}(X) $ be the family of all probability measures  defined on $ \mathcal{B}(X)$. We begin by introducing the following concept of weak solutions of \eqref{t1}.
\begin{definition}\label{de1} \cite{ref1} We describe \eqref{t1} as possessing a weak solution  if there exist a stochastic basis $\left ( \Omega ,\mathscr{F},\mathbb{P} \right )$, a process $u(t)$ adapted to $\lbrace\mathscr{F}_{t}\rbrace_{t\ge0}$ taking values in $\mathcal{H} ^{1}$ such that
\begin{enumerate}[(\textbf{I})]
        \item $u\in C([0,T];\mathcal{H} ^{1})\cap L^{2}([0,T];\mathcal{H} ^{2})$, $\mathbb{P}$-a.s., for any $T>0$;
	\end{enumerate}
\begin{enumerate}[(\textbf{II})]
        \item It holds that in $\mathcal{H} ^{0}$
        \begin{align*}
        u(t)=u_{0}+\int_{0}^{t} [\nu S_{1}(u(s))-S_{2}(u(s))+F(s,u(s))]\text{d}s+\int_{0}^{t}G(s,u(s))\text{d}W(s) ,
        \end{align*}
        for all $t\in [0,T]$, $\mathbb{P}$-a.s.
	\end{enumerate}
\end{definition}
The investigation of the existence and uniqueness of solutions to equation \eqref{t1},  characterized by locally weak
monotonicity coefficients, assumes that the initial value  $u_{0}$ to be independent of  \(W(t)\). Initially, we posit that the coefficients in \eqref{t1} satisfy the following hypotheses:
 ~\\
\\\textbf{(H1)} For continuous functions $f$, $g$, there exist constants $C$ and  $M$  such that for all $t\in[0,T]$ and $u\in L^{2}(\mathbb{D} ;\mathbb{R}^{3})$,
\begin{align*}
		\left \langle u,f(t,u) \right \rangle _{L^{2}} \vee \left \| g(t,u ) \right \| ^{2}_{\mathscr{L}(K,L^{2})}  \le C \left \| u\right \|^{2}_{L^{2} }+M;
	\end{align*}
and for any $t\in[0,T]$ and $u\in \mathcal{H}^{1}  (\mathbb{D} ;\mathbb{R}^{3})$,
\begin{align*}
		\left \langle u,f(t,u) \right \rangle _{\mathcal{H}^{1}} \vee \left \| g(t,u ) \right \| ^{2}_{\mathscr{L}(K,\mathcal{H}^{1})}  \le C \left \| u\right \|^{2}_{\mathcal{H}^{1} }+M.
	\end{align*}
\textbf{(H2)}  Let $\mathcal{A}:[0,1)\to \mathbb{R} _{+} $ is an increasing, concave and continuous function satisfying $$\mathcal{A}(0)=0, \int_{0^{+}}\frac{\text{d}r}{\mathcal{A}(r)}=+\infty.$$    The functions $f$, $g$ satisfy,  for all $t\in[0,T]$ and $u,v\in L^{2}(\mathbb{D} ;\mathbb{R}^{3})  $ with $\left \| u-v \right \| _{L^{2}}\le\zeta (\zeta\in(0,1))$,
\begin{align*}
		\langle u-v, f(t,u)- f(t,v )\rangle_{L^{2}}  \le c\mathcal{A}(\left \| u -v  \right \|^{2} _{L^{2}}),\quad  \left \| g(t,u )- g(t,v ) \right \|^{2} _{\mathscr{L}(K,L^{2})} \le c\mathcal{A}(\left \| u -v  \right \|^{2} _{L^{2}}),
	\end{align*}
and for all $t\in[0,T]$ and $u,v\in \mathcal{H}^{1}(\mathbb{D} ;\mathbb{R}^{3})  $ with $\left \| u-v \right \| _{\mathcal{H}^{1}}\le\zeta $,
\begin{align*}
\langle u-v, f(t,u)- f(t,v )\rangle_{\mathcal{H}^{1}}  \le c\mathcal{A}(\left \| u -v  \right \|^{2} _{\mathcal{H}^{1}}),\quad  \left \| g(t,u )- g(t,v ) \right \|^{2} _{\mathscr{L}(K,\mathcal{H}^{1})} \le c\mathcal{A}(\left \| u -v  \right \|^{2} _{\mathcal{H}^{1}}).
	\end{align*}
\textbf{(H3)} For any $T > 0$, there exists a constant $C$ such that for any $x,y\in \mathbb{D} $
\begin{align*}
		\sup_{t\in[0,T],x\in \mathbb{D} }\left | \partial x_{i}\mathcal{K} (t,x) \right | \le C,\quad \sup_{t\in[0,T],x\in \mathbb{D} } \left | \mathcal{K} (t,x) \right |^{2}=a^{*} \le
\left\{\begin{matrix}
  \frac{\nu  }{73},& p\in [1,\frac{75}{2}  ],\\
  \frac{p\nu -\epsilon }{2p(p-1)}, &p>\frac{75}{2} ,
\end{matrix}\right.
\end{align*}
where $i=1,2$ and $\epsilon >0 $ is arbitrarily small.

In this paper, $C_{T}$ represents certain positive constants dependent on $T$, which may vary from line to line. In addition to the aforementioned assumptions, the following two inequalities will be extensively employed. These inequalities play a pivotal role in the analysis of the Navier-Stokes equations (see  \cite{ref2,ref3,ref5}):
\begin{enumerate}[(1)]
		\item Let $q\in[1,\infty ]$ and $m\in\mathbb{N} $. If
\begin{align*}
		\frac{1}{q} =\frac{1}{2} -\frac{m\theta }{3}, \quad \theta\in[0,1],
	\end{align*}
then, for any $u\in W^{m,2}$,
\begin{align}\label{k1}
		\left \| u \right \| _{L^{q}}\le C_{q,m}\left \| u \right \|^{\theta }_{W^{m,2}} \left \| u \right \|^{1-\theta }_{L^{2}}.
	\end{align}
\end{enumerate}
\begin{enumerate}[(2)]
        \item \begin{align}\label{k2}
		\sup_{x\in \mathbb{D} } \left | u(x) \right | ^{2}\le C\left \| \Delta u  \right \|_{L^{2}} \left \| \nabla u \right \|_{L^{2}}.
	\end{align}
	\end{enumerate}

A considerable body of research has been devoted to investigating the existence and uniqueness of solutions under the condition that the system coefficients satisfy Lipschitz continuity. Conversely, research concerning coefficients with lower regularity, such as those meeting only locally weak
monotonicity conditions, remains relatively scarce.  We investigate the existence and uniqueness of solutions, as well as additional asymptotic properties of equation \eqref{t1}, employing the Galerkin-type approximation technique.
\begin{theorem}\label{th1} Consider \eqref{t1}. Under assumptions \textbf{(H1)}$-$\textbf{(H3)}, for any initial value $u_{0} \in \mathcal{H} ^{1}$, there exists a unique solution $u(t,x)\in L^{2}(\Omega ; C([0,T],\mathcal{H} ^{1}))\cap L^{2}(\Omega ; L^{2}([0,T],\mathcal{H} ^{2}))$ for \eqref{t1} in the sense of Definition \ref{de1} such that for any $p\ge 1$ and $T>0$,
\begin{align}\label{t111}
		& \mathbb{E} (\sup_{t\in[0,T]}\left \| u(t) \right \|_{\mathcal{H}^{1} }^{2p})+\mathbb{E}\int_{0}^{T}\left \| u(t) \right \|_{\mathcal{H}^{1} }^{2p-2}[ \left \| u(s)\right \| ^{2}_{\mathcal{H}^{2}}+  \left \| \left | u \right |\left | \nabla u \right | \right \|^{2}_{\mathcal{H}^{0}}\text{d}s\nonumber
\\&\le C_{\nu,T,M,\epsilon,p,N}(\left \| u_{0} \right \|_{\mathcal{H}^{1} }^{2p}+1).
	\end{align}
\end{theorem}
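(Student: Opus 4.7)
The plan is to run a Galerkin scheme, derive uniform $\mathcal{H}^{1}$-moment bounds, pass to the limit to obtain a martingale solution in the sense of Definition~\ref{de1}, and then upgrade this to a unique strong solution through the Yamada--Watanabe principle. The main obstacle is the pathwise uniqueness step, because \textbf{(H2)} is strictly weaker than Lipschitz and Gronwall's lemma is no longer available; the paper's ``specialized control function'' is precisely a Bihari/Osgood device adapted to the stochastic setting, and I would reproduce that by a contradiction argument.

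First I would fix the orthonormal basis $\{e_{k}\}$ of $\mathcal{H}^{0}$ consisting of Stokes eigenfunctions, let $\Pi_{n}$ be the projection onto $H_{n}=\mathrm{span}(e_{1},\dots,e_{n})$, and study the finite-dimensional SDE
\begin{align*}
\mathrm{d}u_{n}=\Pi_{n}[\nu S_{1}(u_{n})-S_{2}(u_{n})+F(t,u_{n})]\,\mathrm{d}t+\Pi_{n}G(t,u_{n})\,\mathrm{d}W,\qquad u_{n}(0)=\Pi_{n}u_{0}.
\end{align*}
Continuity plus the linear growth from \textbf{(H1)}--\textbf{(H3)} yield a weak solution in $H_{n}$ by classical SDE theory, and the finite-dimensional version of the uniqueness argument below promotes it to a strong solution $u_{n}$.

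Second, I would apply It\^o's formula to $\|u_{n}\|_{\mathcal{H}^{1}}^{2p}$. The cancellation $\langle S_{2}^{1}(u_{n},u_{n}),u_{n}\rangle_{\mathcal{H}^{0}}=0$ together with the monotone contribution of $\Psi_{N}(|u_{n}|^{2})u_{n}$ is what generates the extra dissipation $\||u_{n}||\nabla u_{n}|\|_{\mathcal{H}^{0}}^{2}$ in \eqref{t111}; the pointwise bound $|\mathcal{K}|^{2}\le a^{*}$ in \textbf{(H3)} lets the transport-noise quadratic variation be absorbed into $\nu\|u_{n}\|_{\mathcal{H}^{2}}^{2}$ (this is precisely why the threshold $a^{*}$ in \textbf{(H3)} is $p$-dependent); \textbf{(H1)} and the interpolation inequalities \eqref{k1}--\eqref{k2} control the remaining cubic terms. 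Burkholder--Davis--Gundy on the stochastic integral, followed by Gronwall \emph{on moments}, produces \eqref{t111} uniformly in $n$. Combining \eqref{t111} with a H\"older-in-time estimate in a weaker space derived from the SDE form and the Aubin--Lions lemma yields tightness of $\mathcal{L}(u_{n})$ on $C([0,T];\mathcal{H}^{0}_{\mathrm{loc}})\cap L^{2}([0,T];\mathcal{H}^{1}_{\mathrm{loc}})$. Prokhorov, Skorokhod and a standard martingale-identification argument in the spirit of \cite{ref1,ref16} then deliver a martingale solution of \eqref{t1} satisfying the bound \eqref{t111}.

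The hard step is pathwise uniqueness. Given two solutions $u^{1},u^{2}$ on a common stochastic basis with identical initial data, set $z:=u^{1}-u^{2}$ and localize by $\tau_{\zeta}:=\inf\{t\ge 0:\|z(t)\|_{\mathcal{H}^{1}}\ge \zeta\}$ so that \textbf{(H2)} applies on $[0,\tau_{\zeta}]$. It\^o's formula for $\|z\|_{\mathcal{H}^{1}}^{2}$, together with the trilinear cancellation, monotonicity of $\Psi_{N}(|\cdot|^{2})\cdot$, the absorption of the transport-noise difference via \textbf{(H3)}, and the local weak monotonicity of $f,g$, gives
\begin{align*}
\varphi(t):=\mathbb{E}\|z(t\wedge\tau_{\zeta})\|_{\mathcal{H}^{1}}^{2}\le c\int_{0}^{t}\mathcal{A}\bigl(\varphi(s)\bigr)\,\mathrm{d}s,
\end{align*}
after invoking Jensen's inequality with the concavity of $\mathcal{A}$. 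Since Gronwall is inapplicable, I would argue by contradiction: if $\varphi(t_{0})>0$ for some $t_{0}$, the differential inequality $\varphi'\le c\mathcal{A}(\varphi)$ integrated backwards together with the Osgood condition $\int_{0^{+}}\mathrm{d}r/\mathcal{A}(r)=+\infty$ forces $\varphi(0)>0$, contradicting $z(0)=0$. The subtle point is building the control function $\varphi$ so that concavity does not leak a constant through Jensen and so that the residual stochastic term is a genuine martingale on $[0,\tau_{\zeta}\wedge t]$; this is why the bare Bihari lemma is replaced by a tailored iterative construction. Letting $\zeta\uparrow\infty$ by continuity of $u^{i}$ in $\mathcal{H}^{1}$ yields $u^{1}\equiv u^{2}$, and Yamada--Watanabe then converts the martingale solution into the unique strong solution asserted in the theorem.
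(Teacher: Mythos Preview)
Your global strategy---Galerkin approximation, uniform $\mathcal{H}^{1}$ moment bounds, tightness/Skorokhod, pathwise uniqueness via an Osgood--Bihari device, Yamada--Watanabe---is exactly the paper's route, and your treatment of the a priori estimate \eqref{t111} is accurate. The uniqueness step, however, has two concrete gaps.

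First, the paper carries out pathwise uniqueness in $\mathcal{H}^{0}$, not in $\mathcal{H}^{1}$, and this is not cosmetic. The ``trilinear cancellation'' $\langle (u_{2}\cdot\nabla)z,z\rangle_{\mathcal{H}^{0}}=0$ and the pointwise monotonicity of $u\mapsto\Psi_{N}(|u|^{2})u$ are $L^{2}$ facts; they do not survive pairing against $(I-\Delta)z$. In the $\mathcal{H}^{1}$ inner product neither the convection nor the taming term difference cancels, and one must instead absorb them into the $\mathcal{H}^{2}$ dissipation coming from $S_{1}$ (this is what the paper does later in Theorem~\ref{th3}, equations \eqref{112}--\eqref{113}, under the extra hypothesis $\mathcal{K}\equiv 0$). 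With $\mathcal{K}\not\equiv 0$ the transport-noise contribution in $\mathcal{H}^{1}$ is of size $a^{*}\|z\|_{\mathcal{H}^{2}}^{2}$ and, after the BDG amplification, competes with the full $\nu\|z\|_{\mathcal{H}^{2}}^{2}$; the threshold in \textbf{(H3)} was calibrated for the $\mathcal{H}^{0}$ argument (see \eqref{t21}), not for an $\mathcal{H}^{1}$ one. So your displayed inequality $\varphi(t)\le c\int_{0}^{t}\mathcal{A}(\varphi(s))\,\mathrm{d}s$ in $\mathcal{H}^{1}$ does not follow from the ingredients you list.

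Second, your localization $\tau_{\zeta}=\inf\{t:\|z(t)\|\ge\zeta\}$ controls only the \emph{difference}, whereas the nonlinear terms generate coefficients depending on $\|u^{1}\|_{\mathcal{H}^{1}},\|u^{2}\|_{\mathcal{H}^{1}}$ pointwise in $\omega$. Without a second stopping time $\tau_{R}=\inf\{t:\|u^{1}(t)\|_{\mathcal{H}^{1}}\vee\|u^{2}(t)\|_{\mathcal{H}^{1}}>R\}$ your constant $c$ is random and Jensen cannot be applied to pull $\mathbb{E}$ inside $\mathcal{A}$. The paper uses \emph{both} $\tau_{R}$ and $\tau_{\beta}$, obtains a deterministic constant $C_{R,\nu}$ in \eqref{t21}, runs the control function $\Gamma(t)=\int_{\iota}^{t}(\mathcal{A}(s)+s)^{-1}\,\mathrm{d}s$ as in \eqref{r13}--\eqref{r15}, and only then sends $R\to\infty$ via the a priori bound and Fatou. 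Your contradiction argument is morally the same device, but you should implement it in $\mathcal{H}^{0}$ with the double stopping time to make the inequality close.
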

\begin{proof} To establish the existence and uniqueness of a strong solution for system \eqref{t1} characterized by locally weak
monotonicity coefficients, we delineate the proof into two distinct steps:
~\\
\\\textbf{Step1: Galerkin’s approximation}

We commence by employing the Galerkin projection technique to convert system \eqref{t1} into a finite-dimensional framework. Let  $$B=\left \{ u\in C^{\infty }_{0}(\mathbb{D};\mathbb{R} ^{3}  ):\text{div}u=0 \right \},$$ and assume the existence of orthonormal bases  $\left \{ e _{1},e _{2},e  _{3} ,...\right \} \subset B$ for $\mathcal{H}^{1} $ and $\left \{ \varrho_{1}, \varrho_{2}, \varrho_{3}, ..., \right \}$ for $K$. By selecting the first $k$ orthonormal bases from each set, we define the following operators:
\begin{align*}
\begin{split}
		\Lambda _{1}^{k}:B^{*}\to \mathcal{H}^{1}_{k}:=\text{span}\{e _{1},e  _{2},...,e  _{k} \},\quad \Lambda _{2}^{k}:K\to K^{k}:=\text{span}\{\varrho   _{1},\varrho   _{2},...,\varrho   _{k} \}.
\end{split}
	\end{align*}
For any $u\in B^{*}$ and $k\ge1$, we derive $u^{k}=\Lambda  _{1}^{k}(u)=\sum_{i=1}^{k} \langle u, e  _{i}\rangle _{\mathcal{H}^{1} }e  _{i}$ and  $W^{k}(t)=\Lambda  _{2}^{k}[W(t)]=\sum_{i=1}^{k}\left \langle W(t),\varrho   _{i} \right \rangle _{K}\varrho   _{i}$. Building upon the aforementioned approximation techniques, we will first analyze the finite-dimensional stochastic ordinary differential equation within $\mathcal{H}^{1} _{k}$:
\begin{align}\label{t2}
		\begin{cases}
 \text{d}u^{k}(t)=\Lambda _{1}^{k}[\nu S_{1}(u)-S_{2}(u)+F(t,u)]\text{d}t+\Lambda _{1}^{k}G(t,u)\text{d}W^{k}(t), \\
u^{k}(0) =\Lambda _{1}^{k}u_{0}.
\end{cases}
	\end{align}
According to the definition of operators $S_{1}$, $S_{2}$ we ascertain the following for any $u\in \mathcal{H}^{2}$
\begin{align}\label{t3}
 \left \langle u ,\nu S_{1}(u) \right \rangle _{\mathcal{H}^{1}}&= \left \langle (I-\Delta )u ,\nu\Delta u \right \rangle _{\mathcal{H}^{0}  } \nonumber
 \\&=-\nu\left \| (I-\Delta ) u \right \| ^{2}_{\mathcal{H}^{0} }+\nu\left \langle (I-\Delta )u , u \right \rangle _{\mathcal{H}^{0}}\\&\le-\nu\left \| u\right \| ^{2}_{\mathcal{H}^{2}} +C_{\nu}\left \| u  \right \|^{2}_{\mathcal{H}^{1} } . \nonumber
	\end{align}
Furthermore, applying Young's inequality,
\begin{align}\label{t5}
 -\left \langle u ,S^{1}_{2}(u) \right \rangle _{\mathcal{H}^{1} }&\le \frac{\nu }{2}  \left \| (I-\Delta ) u \right \| ^{2}_{\mathcal{H}^{0} }+ \frac{1 }{2\nu}  \left \| (u ,\nabla )u \right \|^{2}_{\mathcal{H}^{0}}\nonumber
\\ &\le \frac{\nu }{2} \left \| u\right \| ^{2}_{\mathcal{H}^{2}} + \frac{1 }{2\nu}  \left \| \left | u \right |\left | \nabla u \right | \right \|^{2}_{\mathcal{H}^{0}}.
	\end{align}
For $\left \langle u ,\Psi _{N}(\left | u \right |^{2} )u \right \rangle _{\mathcal{H}^{1} }$, according to the definition of function $\Psi _{N}$, refer to Lemma 2.3 of \cite{ref1}, we obtain that
\begin{align}\label{t555}
-\left \langle u ,\Psi _{N}(\left | u \right |^{2} )u \right \rangle _{\mathcal{H}^{1} }&=-\left \langle (I-\Delta )u ,\Psi _{N}(\left | u \right |^{2} )u \right \rangle _{\mathcal{H}^{0} }\nonumber
\\&=-\int _{\mathbb{D} }\left | u \right |^{2}\Psi _{N}(\left | u \right |^{2} )\text{d}x-\sum_{n,i=1}^{3} \int _{\mathbb{D} }\partial _{i}u^{n}\partial _{i}[\Psi _{N}(\left | u \right |^{2} )u^{n}]\text{d}x\nonumber
\\&\le -\sum_{n,i=1}^{3} \int _{\mathbb{D} }\partial _{i}u^{n}[\Psi _{N}(\left | u \right |^{2} )\partial _{i}u^{n}+\Psi' _{N}(\left | u \right |^{2} )\partial _{i}\left | u \right |^{2}u^{n}]\text{d}x
\\&=-\int _{\mathbb{D} }\left | \nabla u \right |^{2}\Psi _{N}(\left | u \right |^{2} )\text{d}x-\frac{1}{2} \int _{\mathbb{D} }\Psi' _{N}(\left | u \right |^{2} )\left |\nabla  \left |  u\right |^{2}  \right |^{2}\text{d}x\nonumber
\\&\le -\frac{1 }{\nu}  \left \| \left | u \right |\left | \nabla u \right | \right \|^{2}_{\mathcal{H}^{0}}+\frac{N }{\nu}\left \| u  \right \|^{2}_{\mathcal{H}^{1} }.\nonumber
	\end{align}
In addition, for $G(t,u)$, we obtain
\begin{align}\label{t7}
 \left \| G(t,u) \right \|^{2}_{\mathscr{L}(K,\mathcal{H}^{1})}&= \left \| G(t,u) \right \|^{2}_{\mathscr{L}(K,\mathcal{H}^{0})}+ \left \| \nabla G(t,u) \right \|^{2}_{\mathscr{L}(K,\mathcal{H}^{0})}\nonumber
 \\&\le \int _{\mathbb{D} }\left \| \mathcal{K} (t,x) \right \| _{\mathscr{L}(K,U)}^{2}\left \| \nabla u \right \|_{\mathcal{H}^{0} }^{2}\text{d}x +C_{T}\left \| u \right \|_{\mathcal{H}^{0}}^{2}\nonumber
 \\&~~~+\int _{\mathbb{D} }[(\nabla \mathcal{K} (t,x),\nabla)u+(\mathcal{K} (t,x),\nabla)\nabla u]\text{d}x+C_{T}\left \| u \right \|_{\mathcal{H}^{1} }^{2}+M\\&\le\frac{\nu  }{73}\left \| u \right \| ^{2}_{\mathcal{H}^{2} }+C_{\nu,T}\left \| u  \right \|^{2}_{\mathcal{H}^{1} }+M.\nonumber
	\end{align}
Consequently, the assumptions \textbf{(H1)}, \eqref{t3},\eqref{t5},\eqref{t555} and \eqref{t7} imply that for any $\chi \in \mathcal{H}^{1}_{k}$,
\begin{align}\label{t6}
 2\left \langle \chi ,\Lambda _{1}^{k}[\nu S_{1}(\chi )-S_{2}(\chi )+F(t,\chi )] \right \rangle _{\mathcal{H}^{1}_{k} }+\left \| \Lambda _{1}^{k}G(t,\chi) \right \|^{2}_{\mathscr{L}(K^{k},\mathcal{H}^{1}_{k})}\le C_{k,T,\nu,M,N}(\left \| \chi  \right \|^{2}_{\mathcal{H}^{1}_{k} } +1).
	\end{align}
Furthermore, based on assumption \textbf{(H2)}, we establish that the mappings $\Lambda _{1}^{k}[\nu S_{1}(\chi )-S_{2}(\chi )+F(t,\chi )]$ and $\Lambda _{1}^{k}G(t,u)$ satisfy locally  weak monotonicity conditions, i.e., for any $\chi_{1,2} \in \mathcal{H}^{1}_{k}$ with $\left \| \chi_{1}-\chi_{2} \right \| _{\mathcal{H}^{1}_{k}}\le\zeta $,  we have
\begin{align}\label{t11}
 &\left \langle \chi_{1}-\chi_{2} ,\Lambda _{1}^{k}[\nu S_{1}(\chi_{1} )-S_{2}(\chi_{1} )+F(t,\chi_{1} )]-\Lambda _{1}^{k}[\nu S_{1}(\chi_{2} )-S_{2}(\chi_{2} )+F(t,\chi_{2} )] \right \rangle _{\mathcal{H}^{1}_{k} }\nonumber
 \\&\le C_{k,T,\nu}[\left \| \chi_{1} -\chi_{2} \right \|_{\mathcal{H}^{1}_{k}}^{2}+\mathcal{A} (\left \| \chi_{1} -\chi_{2} \right \|_{\mathcal{H}^{1}_{k}}^{2})],
	\end{align}
and
\begin{align}\label{t12}
 \left \| \Lambda _{1}^{k}G(t,\chi_{1})-\Lambda _{1}^{k}G(t,\chi_{2}) \right \|^{2}_{\mathscr{L}(K^{k},\mathcal{H}^{1}_{k})}\le  C_{k,T,\nu}[\left \| \chi_{1} -\chi_{2} \right \|_{\mathcal{H}^{1}_{k}}^{2}+\mathcal{A} (\left \| \chi_{1} -\chi_{2} \right \|_{\mathcal{H}^{1}_{k}}^{2})].
	\end{align}
For the sake of clarity in the subsequent discussion, we provide the following definition:
\begin{align*}
 \mathcal{F} (t,\chi):=\Lambda _{1}^{k}[\nu S_{1}(\chi )-S_{2}(\chi )+F(t,\chi ),\quad \mathcal{G} (t,\chi):=\Lambda _{1}^{k}G(t,\chi).
	\end{align*}
Let the support of $\rho \in C^{\infty }(\mathbb{R} )$ be contained in $\mathcal{O} ^{\mathcal{M} }=\{\chi\in \mathcal{H}^{1}_{k}:\left \| \chi  \right \| _{\mathcal{H}^{1}_{k}}\le \mathcal{M} \}$, i.e.,
\begin{align*}
\left\{\begin{matrix}
 \rho (\left \| \chi  \right \|_{\mathcal{H}^{1}_{k}} )=1, &\chi \in \mathcal{O} ^{\mathcal{M} }, \\
   \rho (\left \| \chi  \right \|_{\mathcal{H}^{1}_{k}} )=0, &\chi \notin  \mathcal{O} ^{\mathcal{M} }.&
\end{matrix}\right.
\end{align*}
For any $\chi\in \mathcal{H}^{1}_{k}$, let $\mathcal{F}^{\mathcal{M} } (t,\chi)=\mathcal{F} (t,\chi)\rho (\left \| \chi  \right \|_{\mathcal{H}^{1}_{k}} )$ and $\mathcal{G}^{\mathcal{M} } (t,\chi)=\mathcal{G} (t,\chi)\rho (\left \| \chi  \right \|_{\mathcal{H}^{1}_{k}} )$, which implies that $\mathcal{F}^{\mathcal{M} } (t,\chi)$, $\mathcal{G}^{\mathcal{M} } (t,\chi)$ are uniformly bounded on $\mathcal{H}^{1}_{k}$ and  locally weak
monotonicity. By Proposition 3.3 of \cite{ref67}, there exist two sequences of uniformly bounded local Lipschitz continuous functions $\mathcal{F}^{\mathcal{M} }_{m} (t,\chi)$ and $\mathcal{G}^{\mathcal{M} } _{m}(t,\chi)$ such that
 \begin{align}\label{t9}
\left \| \mathcal{F}^{\mathcal{M} }_{m} (t,\chi)-\mathcal{F}^{\mathcal{M} }(t,\chi) \right \|_{\mathcal{H}^{1}_{k}} \to 0,\quad \left \| \mathcal{G}^{\mathcal{M} }_{m} (t,\chi)-\mathcal{G}^{\mathcal{M} }(t,\chi) \right \|_{\mathscr{L}(K^{k},\mathcal{H}^{1}_{k})} \to 0,\quad \text{as}\quad m\to\infty,
\end{align}
 uniformly on each  bounded subset of $\mathcal{H}^{1}_{k}$ and $t\in[0,T]$.

 Furthermore, it is readily apparent that $\mathcal{F}^{\mathcal{M} }_{m} (t,\chi)$ and $\mathcal{G}^{\mathcal{M} } _{m}(t,\chi)$  satisfy  \eqref{t6} for any $m\ge 1$. Consequently, by the theory of stochastic differential equations (SDE), there exists a unique global strong solution $\chi _{m}(t)$ satisfying
\begin{align}\label{aaaaaa1}
\chi_{m}(t)=u^{k}_{0}+\int_{0}^{t} \mathcal{F}^{\mathcal{M} }_{m} (s,\chi_{m}(s))\text{d}s+\int_{0}^{t} \mathcal{G}^{\mathcal{M} }_{m} (s,\chi_{m}(s))\text{d}W^{k}(s).
\end{align}
Indeed, the weak limit point of  $\chi_{m}(t)$ as \( m \to \infty \) constitutes a weak solution $\chi(t)$ to the following system:
\begin{align}\label{t10}
\begin{cases}
 \text{d}\chi(t)=\mathcal{F}^{\mathcal{M} }(t,\chi)\text{d}t+\mathcal{G}^{\mathcal{M} }(t,\chi)\text{d}W^{k}(t), \\
\chi(0)=u^{k}_{0}.
\end{cases}
\end{align}
This proof adheres to a standard approach; for a detailed exposition, please refer to \textbf{Appendix \uppercase\expandafter{\romannumeral1}}.

We shall now proceed to establish the pathwise uniqueness of these weak solutions. Let us consider two stochastic processes $\chi_{1}(t)$ and $\chi_{2}(t)$ that satisfy the following form:
\begin{align*}
		\begin{cases}
\chi_{1}(t)= \mathcal{F}^{\mathcal{M} }(t,\chi_{1}(t))\text{d}s+ \mathcal{G}^{\mathcal{M} }(t,\chi_{1}(t))\text{d}W^{k}(t), \\
\chi_{1}(0)=\chi^{0}_{1}\in \mathcal{H}^{1}_{k},
\end{cases}
	\end{align*}
and
\begin{align*}
		\begin{cases}
\chi_{2}(t)= \mathcal{F}^{\mathcal{M} }(t,\chi_{2}(t))\text{d}s+ \mathcal{G}^{\mathcal{M} }(t,\chi_{2}(t))\text{d}W^{k}(t), \\
\chi_{2}(0)=\chi^{0}_{2}\in \mathcal{H}^{1}_{k}.
\end{cases}
	\end{align*}
Without loss of generality, we assume that $\left \| \chi^{0}_{1}-\chi^{0}_{2} \right \| _{\mathcal{H}^{1}_{k}}<1$ and $\beta   \in(\left \| \chi^{0}_{1}-\chi^{0}_{2} \right \| _{\mathcal{H}^{1}_{k}},1 ]$. Let us denote
\begin{align*}
		\tau _{R}=\underset{t\ge0}{\inf} \{\left \| \chi_{1}(t) \right \| _{\mathcal{H}^{1}_{k}}\vee \left \| \chi_{2}(t) \right \| _{\mathcal{H}^{1}_{k}}>R \},\quad \tau _{\beta }=\underset{t\ge0}{\inf} \{\left \| \chi_{1}(t)-\chi_{2} (t)\right \| _{\mathcal{H}^{1}_{k}}>\beta  \},
	\end{align*}
where $\left \| \chi^{0}_{1}\right \| _{\mathcal{H}^{1}_{k}}\vee \left \| \chi^{0}_{2} \right \| _{\mathcal{H}^{1}_{k}}<R$. Indeed, we assert that the following property holds:
\begin{align}\label{r12}
		 \lim_{\left \| \chi^{0}_{1}-\chi^{0}_{2} \right \| _{\mathcal{H}^{1}_{k}} \to 0} \mathbb{E} (\underset{r\in[0,t]}{\sup}\left \| \chi_{1}(r\wedge \tau _{R }\wedge \tau _{\beta  } )-\chi_{2}(r\wedge \tau _{R }\wedge \tau _{\beta } ) \right \|_{\mathcal{H}^{1}_{k}} ^{2})=0.
	\end{align}
By It$\hat{\text{o}} $'s formula, Young's inequality, the B-D-G inequality and \eqref{t11}, \eqref{t12}, we obtain
\begin{align}\label{r8}
		 & \mathbb{E} (\underset{r\in[0,t]}{\sup}\left \| \chi_{1}(r\wedge \tau _{R }\wedge \tau _{\beta  } )-\chi_{2}(r\wedge \tau _{R }\wedge \tau _{\beta } ) \right \| _{\mathcal{H}^{1}_{k}}^{2}) \nonumber
\\&=\left \| \chi^{0}_{1}-\chi^{0}_{2} \right \| _{\mathcal{H}^{1}_{k}}^{2}+\mathbb{E}\int_{0}^{t\wedge \tau _{R }\wedge \tau _{\beta }} [2\left \langle \chi_{1}(s)-\chi_{2}(s),\mathcal{F}^{\mathcal{M} }(t,\chi_{1}(s))-\mathcal{F}^{\mathcal{M} }(t,\chi_{2}(s)) \right \rangle _{\mathcal{H}^{1}_{k}}\nonumber \\&~~~+\left \| \mathcal{G}^{\mathcal{M} }(t,\chi_{1}(s))-\mathcal{G}^{\mathcal{M} }(t,\chi_{2}(s)) \right \|_{\mathscr{L}(K^{k},\mathcal{H}^{1}_{k})} ^{2}]\text{d}s   \nonumber
\\&~~~+2\mathbb{E} (\underset{r\in[0,t]}{\sup}\int_{0}^{r\wedge \tau _{R }\wedge \tau _{\beta }}[\chi_{1}(s)-\chi_{2}(s)]^{T}[ \mathcal{G}^{\mathcal{M} }(t,\chi_{1}(s))-\mathcal{G}^{\mathcal{M} }(t,\chi_{2}(s)) ]\text{d}W^{k}(s))
\\&\le \left \| \chi^{0}_{1}-\chi^{0}_{2} \right \| _{\mathcal{H}^{1}_{k}}^{2}+C_{k,T,\nu }\mathbb{E}\int_{0}^{t\wedge \tau _{R }\wedge \tau _{\beta }}[\mathcal{A} (\left \| \chi _{1}(s)-\chi _{2}(s) \right \|^{2} _{\mathcal{H}^{1}_{k}})+\left \| \chi _{1}(s)-\chi _{2}(s) \right \|^{2 } _{\mathcal{H}^{1}_{k}}]\text{d}s\nonumber
\\&~~~+\frac{1}{2} \mathbb{E} (\underset{r\in[0,t]}{\sup}\left \| \chi_{1}(r\wedge \tau _{R }\wedge \tau _{\beta  } )-\chi_{2}(r\wedge \tau _{R }\wedge \tau _{\beta } ) \right \|_{\mathcal{H}^{1}_{k}} ^{2}).\nonumber
	\end{align}
Therefore, by applying Jensen’s inequality, we have
\begin{align*}
		 & \mathbb{E} (\underset{r\in[0,t]}{\sup}\left \| \chi_{1}(r\wedge \tau _{R }\wedge \tau _{\beta  } )-\chi_{2}(r\wedge \tau _{R }\wedge \tau _{\beta } ) \right \|_{\mathcal{H}^{1}_{k}} ^{2}) \\&\le \left \| \chi^{0}_{1}-\chi^{0}_{2} \right \| _{\mathcal{H}^{1}_{k}}^{2}+C_{k,T,\nu }\int_{0}^{t}[\mathcal{A} (\mathbb{E} \underset{r\in[0,s]}{\sup}\left \| \chi _{1}(s\wedge \tau _{R }\wedge \tau _{\beta })-\chi _{2}(s\wedge \tau _{R }\wedge \tau _{\beta }) \right \|^{2 } _{\mathcal{H}^{1}_{k}})
\\&~~~+\mathbb{E} \underset{r\in[0,s]}{\sup}\left \| \chi _{1}(s\wedge \tau _{R }\wedge \tau _{\beta })-\chi _{2}(s\wedge \tau _{R }\wedge \tau _{\beta }) \right \|^{2} _{\mathcal{H}^{1}_{k}}]\text{d}s
\\&:=\varphi (t).
	\end{align*}
Let $\Gamma  (t)=\int_{\iota}^{t}(\mathcal{A}(s)+s)^{-1} \text{d}s $, which implies that $\Gamma (t)$ is a monotonically increasing function and  $\lim_{t \to 0^{+}} \Gamma (t)=-\infty$.
Thus, $\Gamma (t)$ satisfies $\Gamma (t)>-\infty$ for any $t > 0$. Therefore,
\begin{align}\label{r13}
		 \Gamma(\mathbb{E} (\underset{r\in[0,t]}{\sup}\left \| \chi_{1}(r\wedge \tau _{R }\wedge \tau _{\beta  } )-\chi_{2}(r\wedge \tau _{R }\wedge \tau _{\beta } ) \right \| _{\mathcal{H}^{1}_{k}}^{2}))\le\Gamma(\varphi (t)).
	\end{align}
We subsequently get
\begin{align}\label{r15}
		&\Gamma ( \varphi (t)) \nonumber\\&=\Gamma (\varphi  (0))+\int_{0}^{t}\Gamma '(\varphi (s))\text{d}\varphi(s) \nonumber
\\&=\Gamma ( C_{k,T,\nu }\left \| \chi^{0}_{1}-\chi^{0}_{2} \right \| _{\mathcal{H}^{1}_{k}}^{2})+C_{k,T,\nu }\int_{0}^{t} [\frac{\mathcal{A} (\mathbb{E} \underset{r\in[0,s]}{\sup}\left \| \chi _{1}(s\wedge \tau _{R }\wedge \tau _{\beta })-\chi _{2}(s\wedge \tau _{R }\wedge \tau _{\beta }) \right \|^{2 } _{\mathcal{H}^{1}_{k}})
}{\mathcal{A}(\varphi(s))+\varphi(s)}\nonumber
\\&~~~+ \frac{\mathbb{E} \underset{r\in[0,s]}{\sup}\left \| \chi _{1}(s\wedge \tau _{R }\wedge \tau _{\beta })-\chi _{2}(s\wedge \tau _{R }\wedge \tau _{\beta }) \right \|^{2} _{\mathcal{H}^{1}_{k}}}{\mathcal{A}(\varphi(s))+\varphi(s)}]\text{d}s
\\&\le\Gamma (C_{k,T,\nu }\left \| \chi^{0}_{1}-\chi^{0}_{2} \right \| _{\mathcal{H}^{1}_{k}}^{2})+C_{k,T,\nu }t.\nonumber
	\end{align}
By \eqref{r13} and \eqref{r15}, we have when $\left \| \chi^{0}_{1}-\chi^{0}_{2} \right \| _{\mathcal{H}^{1}_{k}}\to 0$, we have $$\Gamma(\mathbb{E} (\underset{r\in[0,t]}{\sup}\left \| \chi_{1}(r\wedge \tau _{R }\wedge \tau _{\beta  } )-\chi_{2}(r\wedge \tau _{R }\wedge \tau _{\beta } ) \right \| _{\mathcal{H}^{1}_{k}}^{2}))\to -\infty	,$$ which implies
\begin{align*}
	\lim_{\left \| \chi^{0}_{1}-\chi^{0}_{2} \right \| _{\mathcal{H}^{1}_{k}} \to 0} \mathbb{E} (\underset{r\in[0,t]}{\sup}\left \| \chi_{1}(r\wedge \tau _{R }\wedge \tau _{\beta  } )-\chi_{2}(r\wedge \tau _{R }\wedge \tau _{\beta } ) \right \|_{\mathcal{H}^{1}_{k}} ^{2})=0 ,	
	\end{align*}
for any $t\in [0,T]$.

Furthermore, by applying \eqref{r6} and Fatou's lemma, and letting $R\to \infty$, we derive
\begin{align*}
\lim_{\left \| \chi^{0}_{1}-\chi^{0}_{2} \right \| _{\mathcal{H}^{1}_{k}} \to 0} \mathbb{E} (\underset{r\in[0,t]}{\sup}\left \| \chi_{1}(r\wedge \tau _{\beta  } )-\chi_{2}(r\wedge \tau _{\beta } ) \right \|_{\mathcal{H}^{1}_{k}} ^{2})=0.
\end{align*}
 Then, by the definition of $\tau _{\beta  }$,
\begin{align*}
		 &\mathbb{E} (\underset{r\in[0,t]}{\sup}\left \| \chi_{1}(r\wedge \tau _{\beta  } )-\chi_{2}(r\wedge \tau _{\beta } ) \right \| _{\mathcal{H}^{1}_{k}} ^{2})\\&=\mathbb{P}(t\ge\tau _{\beta })\mathbb{E} (\underset{r\in[0,t]}{\sup}\left \| \chi_{1}(r\wedge \tau _{\beta } )-\chi_{2}(r\wedge \tau _{\beta} ) \right \| _{\mathcal{H}^{1}_{k}}^{2})
+\mathbb{P}(t<\tau _{\beta})\mathbb{E} (\underset{r\in[0,t]}{\sup}\left \| \chi_{1}(r\wedge \tau _{\beta } )-\chi_{1}(r\wedge \tau _{\beta } ) \right \|_{\mathcal{H}^{1}_{k}} ^{2})
\\&\ge\mathbb{P}(t\ge\tau _{\beta }) \beta ^{2},
	\end{align*}
which implies $\lim_{\left \| \chi^{0}_{1}-\chi^{0}_{2} \right \| _{\mathcal{H}^{1}_{k}} \to 0} \mathbb{P}(t\ge\tau _{\beta })=0$. Consequently,
\begin{align*}
		 &\lim_{\left \| \chi^{0}_{1}-\chi^{0}_{2} \right \| _{\mathcal{H}^{1}_{k}} \to 0} \mathbb{E} (\underset{r\in[0,t]}{\sup}\left \| \chi_{1}(r )-\chi_{2}(r ) \right \|_{\mathcal{H}^{1}_{k}} ^{2})\\&\le\lim_{\left \| \chi^{0}_{1}-\chi^{0}_{2} \right \| _{\mathcal{H}^{1}_{k}} \to 0}  \mathbb{E} (\underset{r\in[0,t]}{\sup}\left \| \chi_{1}(r\wedge \tau _{\beta  } )-\chi_{2}(r\wedge \tau _{\beta } ) \right \|_{\mathcal{H}^{1}_{k}} ^{2})\\&=0.
	\end{align*}
Therefore, when $\left \| \chi^{0}_{1}-\chi^{0}_{2} \right \| _{\mathcal{H}^{1}_{k}} \to 0$, we obtain
\begin{align}\label{i111}
		\mathbb{P}(\underset{r\in[0,t]}{\sup}\left \| \chi_{1}(r)-\chi_{2}(r ) \right \|_{\mathcal{H}^{1}_{k}} ^{2}=0) =1,
	\end{align}
i.e., the pathwise uniqueness for \eqref{t10} holds. Subsequently, by applying the Yamada-Watanabe theorem, we demonstrate the existence and uniqueness of the global strong solution for \eqref{t10}. Finally, for \eqref{t2},  using  a standard argument (e.g., see Lemma 3.1 of \cite{ref7} similarly), we establish the existence of a unique maximal local strong solution $u^{k}(t)$. Furthermore, let $\tau $ be the explosion time or life time. Then,  for any $t\in [0,\tau)$, by It$\hat{\text{o}} $'s formula and  \eqref{r6}, we have
\begin{align*}
		\left \| u^{k}(t) \right \| _{\mathcal{H}^{1}_{k}}^{2}\le  \left \| u^{k}_{0} \right \| _{\mathcal{H}^{1}_{k}}^{2}+C\int_{0}^{t} (\underset{r\in[0,s]}{\sup}\left \| u^{k}(r) \right \| _{\mathcal{H}^{1}_{k}}^{2}+1)\text{d}s+2\int_{0}^{t} ( u^{k}(r))^{T} \mathcal{G}^{N} (s,u^{k}(r))\text{d}W^{k}(s),
	\end{align*}
which implies that $\tau =\infty $ almost surely by the stochastic Gronwall lemma of \cite{ref8}. Consequently, the unique maximal local strong solution $u^{k}(t)$ is non-explosive almost surely within any finite time.
~\\
 \\\textbf{Step2: Existence and uniqueness of stochastic tamed 3D Navier–Stokes equations}

Our primary strategy involves first establishing the existence of weak solutions for \eqref{t1} through the application of the Galerkin-type approximation technique. In \textbf{Step1}, we have established the existence and uniqueness of the solution for the finite-dimensional system \eqref{t2} with the locally weak
monotonicity coefficients. Next, to address the existence problem of the weak solution for the system \eqref{t1}, we require a priori estimates of the solution $u^{k}(t)$ and the compactness of the space. And a similar proof procedure for the existence of a weak solution for System \eqref{t1} is presented  in Theorem 3.8 of \cite{ref1}, where a detailed proof procedure is provided. Specifically, as $k\to\infty $, the strong solution $u^{k}(t)$ of the finite-dimensional system \eqref{t2} can yield a weak solution of the original system \eqref{t1}. Subsequently, we will proceed to establish the pathwise uniqueness of these weak solutions. Then, by applying the Yamada-Watanabe theorem, we demonstrate the existence of a global strong solution for \eqref{t1}.

Suppose that two stochastic processes $u_{1}(t), u_{2}(t)$ are weak solution of \eqref{t1} with initial conditions $u_{1}(0)=u_{1}^{0}\in \mathcal{H} ^{1}, u_{2}(0)=u_{2}^{0}\in \mathcal{H} ^{1}$. Let us denote
\begin{align*}
		\tau _{R}=\underset{t\ge0}{\inf} \{\left \| u_{1}(t) \right \| _{\mathcal{H}^{1}}\vee \left \| u_{2}(t) \right \| _{\mathcal{H}^{1}}>R \},\quad \tau _{\beta }=\underset{t\ge0}{\inf} \{\left \| u_{1}(t)-u_{2} (t)\right \| _{\mathcal{H}^{0}}>\beta  \}.
	\end{align*}
By It$\hat{\text{o}} $'s formula, the B-D-G inequality, Young's inequality and \eqref{k1}, we obtain
\begin{align}\label{t21}
		 & \mathbb{E} (\underset{r\in[0,t]}{\sup}\left \| u_{1}(r\wedge \tau _{R }\wedge \tau _{\beta  } )-u_{2}(r\wedge \tau _{R }\wedge \tau _{\beta } ) \right \| _{\mathcal{H} ^{0}}^{2}) \nonumber
\\&=\left \| u^{0}_{1}-u^{0}_{2} \right \| _{\mathcal{H} ^{0}}^{2}+\mathbb{E}\underset{r\in[0,t]}{\sup}\int_{0}^{r\wedge \tau _{R }\wedge \tau _{\beta }} [2\nu \left \langle u_{1}(s)-u_{2}(s),\Delta u_{1}(s)-\Delta u_{2}(s) \right \rangle _{\mathcal{H} ^{0}}\nonumber
\\&~~~-2  \left \langle u_{1}(s)-u_{2}(s),(u_{1}(s)\cdot \nabla)u_{1}(s)-(u_{2}(s)\cdot \nabla)u_{2}(s) \right \rangle _{\mathcal{H} ^{0}}\nonumber
\\&~~~-2\left \langle u_{1}(s)-u_{2}(s), \Psi _{N}(\left | u_{1}(s) \right |^{2} )  u_{1}(s)-\Psi _{N}(\left |  u_{2}(s) \right |^{2} )  u_{2}(s) \right \rangle_{\mathcal{H} ^{0}}\nonumber
\\&~~~+2 \left \langle u_{1}(s)-u_{2}(s),f(s,u_{1}(s))-f(s,u_{2}(s)) \right \rangle _{\mathcal{H} ^{0}}\nonumber +\left \| G(s,u_{1}(s))-G(s,u_{2}(s)) \right \|_{\mathscr{L}(K,\mathcal{H} ^{0})} ^{2}]\text{d}s   \nonumber
\\&~~~+2\mathbb{E} (\underset{r\in[0,t]}{\sup}\int_{0}^{r\wedge \tau _{R }\wedge \tau _{\beta }}\left \langle u_{1}(s)-u_{2}(s),G(s,u_{1}(s))-G(s,u_{2}(s)) \right \rangle _{\mathcal{H} ^{0} }\text{d}W(s))\nonumber
\\&\le \left \| u^{0}_{1}-u^{0}_{2} \right \| _{\mathcal{H} ^{0}}^{2}+\mathbb{E}\underset{r\in[0,t]}{\sup}\int_{0}^{r\wedge \tau _{R }\wedge \tau _{\beta }} [-2\nu \left \| \nabla u_{1}(s)- \nabla u_{2}(s) \right \|^{2}_{\mathcal{H} ^{0} }\nonumber
\\&~~~+2\left \langle \nabla u_{1}(s)- \nabla u_{2}(s),u^{T}_{1}(s)u_{1}(s)-u^{T}_{2}(s)u_{2}(s)\right \rangle_{\mathcal{H} ^{0} }+8\left \| \left | u_{1}(s)-u_{1}(s) \right |(\left | u_{1}(s) \right |
+\left | u_{2}(s) \right |  )  \right \|^{2}_{\mathcal{H} ^{0} } \nonumber
\\&~~~+C_{T}\mathcal{A} (\left \| u_{1}(s)-u_{2}(s) \right \|^{2}_{\mathcal{H} ^{0} })+\sup_{t\in[0,t],x\in \mathbb{D} } \left \| \mathcal{K} (t,x) \right \| _{\mathscr{L}(K,\mathcal{H} ^{0})}(\left \| \nabla u_{1}(s)-\nabla u_{2}(s) \right \| ^{2}_{\mathcal{H} ^{0} })
\\&~~~+12(\mathbb{E}\int_{0}^{t\wedge \tau _{R }\wedge \tau _{\beta }}\left \| u_{1}(s)-u_{2}(s) \right \|^{2}_{\mathcal{H} ^{0}}\left \| G(s,u_{1}(s))-G(s,u_{2}(s)) \right \|_{\mathscr{L}(K,\mathcal{H} ^{0})} ^{2} \text{d}s)^{\frac{1}{2} }\nonumber
\\&\le \left \| u^{0}_{1}-u^{0}_{2} \right \| _{\mathcal{H} ^{0}}^{2}+\int_{0}^{t\wedge \tau _{R }\wedge \tau _{\beta }} [-\nu\left \| \nabla u_{1}(s)- \nabla u_{2}(s) \right \|^{2}_{\mathcal{H} ^{0} }+\frac{\nu }{73}\left \| \nabla u_{1}(s)- \nabla u_{2}(s) \right \|^{2}_{\mathcal{H} ^{0} }\nonumber
\\&~~~+2C_{\nu ,R}\left \| u_{1}(s)-u_{2}(s) \right \|^{\frac{1}{2} }_{\mathcal{H} ^{0} } \left \| u_{1}(s)-u_{2}(s) \right \|^{\frac{3}{2} }_{\mathcal{H} ^{1} }
+C_{T}(\mathcal{A} (\left \| u_{1}(s)-u_{2}(s) \right \|^{2}_{\mathcal{H} ^{0} })]\text{d}s\nonumber
\\&~~~+72\mathbb{E}\int_{0}^{t\wedge \tau _{R }\wedge \tau _{\beta }}\left \| G(s,u_{1}(s))-G(s,u_{2}(s)) \right \|_{\mathscr{L}(K,\mathcal{H} ^{0})} ^{2} \text{d}s\nonumber
\\&~~~+\frac{1}{2}  \mathbb{E} (\underset{r\in[0,t]}{\sup}\left \| u_{1}(r\wedge \tau _{R }\wedge \tau _{\beta  } )-u_{2}(r\wedge \tau _{R }\wedge \tau _{\beta } ) \right \| _{\mathcal{H} ^{0}}^{2})\nonumber
\\&\le \left \| u^{0}_{1}-u^{0}_{2} \right \| _{\mathcal{H} ^{0}}^{2}+\int_{0}^{t\wedge \tau _{R }\wedge \tau _{\beta }}[C_{R,\nu }\left \| u_{1}(s)-u_{2}(s) \right \|^{2 }_{\mathcal{H} ^{0}}+C_{T}\mathcal{A} (\left \| u_{1}(s)-u_{2}(s) \right \|^{2}_{\mathcal{H} ^{0} })]\text{d}s\nonumber
\\&~~~+\frac{1}{2}  \mathbb{E} (\underset{r\in[0,t]}{\sup}\left \| u_{1}(r\wedge \tau _{R }\wedge \tau _{\beta  } )-u_{2}(r\wedge \tau _{R }\wedge \tau _{\beta } ) \right \| _{\mathcal{H} ^{0}}^{2}).\nonumber
	\end{align}
Based on  the conclusion of \eqref{t21}, we will again employ a proof, similar to the proof procedures outlined in \eqref{r12}-\eqref{i111}, to establish the pathwise uniqueness of the weak solution. Subsequently, we will apply the Yamada-Watanabe theorem to demonstrate the existence and uniqueness of the strong solution for system  \eqref{t1}.

We will now examine the regularity of solutions to \eqref{t1}. Utilizing \eqref{t3}, \eqref{t5}, \eqref{t555}, Young's inequality and  \eqref{t7},  we obtain
\begin{align}\label{t22}
		 &\mathbb{E} \left \| u(t) \right \|_{\mathcal{H}^{1} }^{2p}\nonumber
 \\&= \left \| u_{0} \right \|_{\mathcal{H}^{1} }^{2p}+\mathbb{E}\int_{0}^{t}p\left \| u(s) \right \|_{\mathcal{H}^{1} }^{2p-2}[2\left \langle u(s),\nu \Delta u(s) \right \rangle_{\mathcal{H}^{1} }-2\left \langle u(s),(u(s),\nabla )u(s) \right \rangle_{\mathcal{H}^{1} }\nonumber
 \\&~~~+2\left \langle u(s),\Psi _{N}(\left |u(s)  \right |^{2} )u(s)  \right \rangle _{\mathcal{H}^{1} }+2\left \langle u(s),f(s,u(s))  \right \rangle _{\mathcal{H}^{1} }+\left \| G(s,u(s)) \right \|^{2}_{\mathscr{L}(K,\mathcal{H}^{1})}   ]   \text{d}s\nonumber
 \\&~~~+2p(p-1)\mathbb{E}\int_{0}^{t}\left \| u(s) \right \|_{\mathcal{H}^{1} } ^{2p-2} \left \| G(s,u(s)) \right \|^{2}_{\mathscr{L}(K,\mathcal{H}^{1})} \text{d}s
\\&\le \left \| u_{0} \right \|_{\mathcal{H}^{1} }^{2p}+\mathbb{E}\int_{0}^{t}p\left \| u(s) \right \|_{\mathcal{H}^{1} }^{2p-2}[-2\nu\left \| u(s)\right \| ^{2}_{\mathcal{H}^{2}} +\nu \left \| u \right \| ^{2}_{\mathcal{H}^{2} }+\frac{1}{\nu }  \left \| \left | u \right |\left | \nabla u \right | \right \|^{2}_{\mathcal{H}^{0}}-\frac{2 }{\nu}  \left \| \left | u \right |\left | \nabla u \right | \right \|^{2}_{\mathcal{H}^{0}}\nonumber
\\&~~~+C_{N ,\nu}\left \| u  \right \|^{2}_{\mathcal{H}^{1} }
+2\left \| u(s) \right \|_{\mathcal{H}^{2} }\left \| f(s,u(s)) \right \|_{L^{2} } \text{d}s\nonumber
\\&~~~+p(2p-1)\mathbb{E}\int_{0}^{t}\left \| u(s) \right \|_{\mathcal{H}^{1} } ^{2p-2} [  \frac{p\nu -\epsilon }{p(2p-1)}\left \| u (s)\right \| ^{2}_{\mathcal{H}^{2} }+C_{\nu,T}\left \| u(s)  \right \|^{2}_{\mathcal{H}^{1} }+M] \text{d}s\nonumber
\\&\le \left \| u_{0} \right \|_{\mathcal{H}^{1} }^{2p}+\mathbb{E}\int_{0}^{t}\left \| u(s) \right \|_{\mathcal{H}^{1} }^{2p-2}[-\epsilon \left \| u(s)\right \| ^{2}_{\mathcal{H}^{2}}-\frac{1 }{\nu}  \left \| \left | u \right |\left | \nabla u \right | \right \|^{2}_{\mathcal{H}^{0}} +C_{\nu,T,M,\epsilon,p,N}(\left \| u (s) \right \|^{2}_{\mathcal{H}^{1} }+1)] \text{d}s.\nonumber
	\end{align}
By applying Gronwall’s lemma, we observe that
\begin{align}\label{t23}
		 &\sup_{t\in[0,T]} \mathbb{E} \left \| u(t) \right \|_{\mathcal{H}^{1} }^{2p}+\mathbb{E}\int_{0}^{T}\left \| u(s) \right \|_{\mathcal{H}^{1} }^{2p-2}[ \left \| u(s)\right \| ^{2}_{\mathcal{H}^{2}}+  \left \| \left | u(s) \right |\left | \nabla u(s) \right | \right \|^{2}_{\mathcal{H}^{0}}\text{d}s\nonumber
\\&\le C_{\nu,T,M,\epsilon,p,N}(\left \| u_{0} \right \|_{\mathcal{H}^{1} }^{2p}+1)
	\end{align}
for any $T>0$. Furthermore, by utilizing \eqref{t22}, Young’s inequality and the B-D-G inequality, we have
\begin{align}\label{t112}
		 \mathbb{E} (\sup_{t\in[0,T]}\left \| u(t) \right \|^{2p}_{\mathcal{H}^{1} }  )
 &\le \left \| u_{0} \right \|_{\mathcal{H}^{1} }^{2p}+C_{\nu,T,M,\epsilon,N}\mathbb{E}\int_{0}^{T}(\left \| u (s) \right \|^{2p}_{\mathcal{H}^{1} }+1) \text{d}s\nonumber
 \\&~~~+2p\mathbb{E}\sup_{t\in[0,T]}\int_{0}^{t}\left \| u (s) \right \|^{2p-2}_{\mathcal{H}^{1} }\left \langle u(s),G(s,u(s))\text{d}W(s) \right \rangle _{\mathscr{L}(K,\mathcal{H}^{1}) }\nonumber
 \\&\le  \left \| u_{0} \right \|_{\mathcal{H}^{1} }^{2p}+C_{\nu,T,M,\epsilon,N}\mathbb{E}\int_{0}^{T}(\left \| u (s) \right \|^{2p}_{\mathcal{H}^{1} }+1) \text{d}s\nonumber
 \\&~~~+C\mathbb{E}(\int_{0}^{T}\left \| u(s) \right \|_{\mathcal{H}^{2} }^{4p-2}\left \|G(s,u(s)) \right \| ^{2}_{\mathscr{L}(K,\mathcal{H}^{0}) }\text{d}s)^{\frac{1}{2} }
 \\&\le  \left \| u_{0} \right \|_{\mathcal{H}^{1} }^{2p}+C_{\nu,T,M,\epsilon,N}\mathbb{E}\int_{0}^{T}(\left \| u (s) \right \|^{2p}_{\mathcal{H}^{1} }+1) \text{d}s\nonumber
 \\&~~~+\frac{1}{2} \mathbb{E} (\sup_{t\in[0,T]}\left \| u(s) \right \|^{2p}_{\mathcal{H}^{1} })+C_{p}(\mathbb{E}\int_{0}^{T} \left \|G(s,u(s)) \right \| ^{2}_{\mathscr{L}(K,\mathcal{H}^{1}) } \text{d}s)^{p}\nonumber
 \\&\le C_{\nu,T,M,\epsilon}(\left \| u_{0} \right \|_{\mathcal{H}^{1} }^{2p}+1)+\frac{1}{2}\mathbb{E} (\sup_{t\in[0,T]}\left \| u(t) \right \|^{2p}_{\mathcal{H}^{1} }  )+ C_{\nu,T,M,\epsilon,N}\mathbb{E}\int_{0}^{T}\left \| u (s) \right \|^{2p}_{\mathcal{H}^{1} } \text{d}s.\nonumber
	\end{align}
Then, in view of Gronwall’s lemma, we obtain
\begin{align*}
		\mathbb{E} (\sup_{t\in[0,T]}\left \| u(t) \right \|^{2p}_{\mathcal{H}^{1} }  )\le C_{\nu,T,M,\epsilon,N}(\left \| u_{0} \right \|_{\mathcal{H}^{1} }^{2p}+1).
	\end{align*}
\end{proof}
\begin{remark}\label{re1}
Indeed, similar to the proof of the aforementioned theorem, we can also derive the following estimate:
\begin{align*}
		 \mathbb{E} (\sup_{t\in[0,T]}\left \| u(t) \right \|_{\mathcal{H}^{0} }^{2p})+\mathbb{E}\int_{0}^{T}\left \| u(t) \right \|_{\mathcal{H}^{0} }^{2p-2} \left \| u(s)\right \| ^{2}_{\mathcal{H}^{1}}\text{d}s\nonumber
\le C_{\nu,T,M,\epsilon,p,N}(\left \| u_{0} \right \|_{\mathcal{H}^{0} }^{2p}+1),
	\end{align*}
for any $T>0$.
\end{remark}
\section{\textup{The averaging principle of Stochastic tamed 3D Navier–Stokes equations  with locally weak monotonicity coefficients}}
in this paper, we consider the following system
\begin{align}\label{t222}
		\begin{cases}
 \text{d}u(t)=[\eta _{1}(\frac{t }{\varepsilon } ) \Delta  u-\eta _{2}(\frac{t }{\varepsilon })[( u,\nabla  )  u+\Psi _{N}(\left | u \right |^{2} ) u]+\nabla p_{1}+f(\frac{t }{\varepsilon },u)]\text{d}t\\~~~~~~~~~~+[(\mathcal{K} (\frac{t }{\varepsilon },x)\cdot \nabla )u+\nabla p_{2}+g(\frac{t }{\varepsilon },u)]\text{d}W(t),\\
 \text{div} u(t)=0,\quad u(0)=u^{\varepsilon}_{0},
\end{cases}
	\end{align}
where $\varepsilon \in (0,1]$ and the functions
$\eta _{1},\eta _{2}\in C([0,+\infty ))$, such that $0<a_{1}\le \eta _{1}(t)\le a_{2}$, and $0< a_{3}\le \eta _{2}(t)  \le a_{4}$ for any $t\in[0,+\infty ) $. Letting $\nu =a_{1}$ and  applying the operator $\Pi$ act to both sides of  \eqref{t222}, we define $\eta _{1}^{\varepsilon}(t)=\eta _{1}(\frac{t }{\varepsilon } )$, $\eta _{2}^{\varepsilon}(t)=\eta _{1}(\frac{t }{\varepsilon } )$, $f^{\varepsilon}(t,u )=\Pi f(\frac{t}{\varepsilon } ,u)$, $S^{\varepsilon}_{3}(t,x,u)=\Pi (\mathcal{K} (\frac{t }{\varepsilon },x)\cdot \nabla )u$ and $g^{\varepsilon}(t,\varphi )=\Pi g(\frac{t}{\varepsilon } ,\varphi)$. Then we eliminate the pressure, thereby obtain an evolution
system for the velocity field
\begin{align}\label{a2}
		\begin{cases}
 \text{d}u(t)=[\eta _{1}^{\varepsilon}(t ) S_{1}(u)-\eta _{2}^{\varepsilon}(t )S_{2}(u)+f^{\varepsilon}(t  ,u)]\text{d}t+[S^{\varepsilon}_{3}(t,x,u)+g^{\varepsilon}(t  ,u)]\text{d}W(t)\\
 \text{div} u(t)=0,\quad u(0)=u^{\varepsilon}_{0}.
 \end{cases}
	\end{align}
To facilitate a more comprehensive analysis of the averaging principle applied to systems  \eqref{a2} with locally weak
monotonicity coefficients, it is essential to introduce the following assumptions:
~\\
\\\textbf{(H4)} There exist  constants $\eta ^{*}_{1}$, $\eta ^{*}_{2}$ with $0<a_{1}\le \eta^{*} _{1}\le a_{2}$, $0<a_{3}\le \eta^{*} _{2}\le a_{3}$ and  functions $\mathcal{R}_{1} $, $\mathcal{R}_{2} $ such that for any $T>0$, $t\in [0,T]$,
\begin{align*}
		\frac{1}{T}\left | \int_{t}^{t+T}\eta _{i}(s)- \eta ^{*}_{i}\text{d}s \right | \le \mathcal{R} _{i}(T),
	\end{align*}
  $i=1,2$.
\\\textbf{(H5)} There exist functions $\mathcal{R}_{3} $, $\mathcal{R}_{4} $, $\mathcal{R}_{5} $  and $f^{*}\in C(L^{2}(\mathbb{D} ),L^{2}(\mathbb{D} ))$, $g^{*}\in C(L^{2}(\mathbb{D} ),\mathscr{L}(K,L^{2}(\mathbb{D} )) )$ and $\mathcal{K} ^{*}\in L^{2}(\mathbb{D} )$ such that for any $T>0$, $t\in [0,T]$, $u \in L^{2}(\mathbb{D} )$ and $x\in \mathbb{D}$,
\begin{align*}
		\frac{1}{T} \left \| \int_{t}^{t+T}[f(s,u ) -f^{*}(u )] \text{d}s\right \| _{L^{2}}\le \mathcal{R}_{3}(T)(\left \| u \right \|_{L^{2}}+M ),
	\end{align*}
\begin{align*}
		\frac{1}{T}\int_{t}^{t+T} \left \| g(s,u ) -g^{*}(u )\right \|^{2} _{\mathscr{L}(K,L^{2}) } \text{d}s\le \mathcal{R}_{4}(T)(\left \| u \right \|^{2}_{L^{2}}+M ),
	\end{align*}
 and
 \begin{align*}
		\frac{1}{T}  \int_{t}^{t+T}\left \|\mathcal{K} (s,x ) -\mathcal{K} ^{*}(x )\right \|^{2} _{\mathscr{L}(K,L^{2})} \text{d}s\le \mathcal{R}_{5}(T),
	\end{align*}
where $\mathcal{R}_{i} $ is decreasing, positive bounded functions and $\lim_{T \to \infty} \mathcal{R} _{i}(T)=0$ for $i=1,2,3,4,5$.

Now we consider the following averaged equation
\begin{align}\label{a22222}
		\begin{cases}
 \text{d}u(t)=[\eta _{1}^{*} S_{1}(u)-\eta _{2}^{*} S_{2}(u)+f^{*}(u)]\text{d}t+[S^{*}_{3}(x,u)+g^{*}( u)]\text{d}W(t),\\
 \text{div} u(t)=0,\quad u(0)=u^{*}_{0},
\end{cases}
	\end{align}
where $S^{*}_{3}(x,u)=\Pi (\mathcal{K} ^{*}(x)\cdot \nabla )u$.
\begin{theorem}\label{co1}Consider \eqref{a22222}. If $f$, $\mathcal{K} $ and $g$ satisfy \textbf{(H1)}$-$\textbf{(H5)} and $a_{4}\le2a_{3}$, then $f^{*}$, $\mathcal{K} ^{*}$ and $g^{*}$ also
satisfy \textbf{(H1)}$-$\textbf{(H3)}. Consequently, the following statement holds: for any $u^{*}_{0}\in \mathcal{H} ^{1} $, there exist a  unique strong  solution $u^{*}(t)$  to \eqref{a22222} with $u^{*}(0)=u^{*}_{0}$ and
\begin{align*}
		& \mathbb{E} (\sup_{t\in[0,T]}\left \| u^{*}(t) \right \|_{\mathcal{H}^{1} }^{2p})+\mathbb{E}\int_{0}^{T}\left \| u^{*}(t) \right \|_{\mathcal{H}^{1} }^{2p-2}[ \left \| u^{*}(s)\right \| ^{2}_{\mathcal{H}^{2}}+  \left \| \left | u^{*} \right |\left | \nabla u^{*} \right | \right \|^{2}_{\mathcal{H}^{0}}\text{d}s\nonumber
\\&\le C_{\nu,T,M,\epsilon,p,N}(\left \| u^{*}_{0} \right \|_{\mathcal{H}^{1} }^{2p}+1)
	\end{align*}
for any $t\in[0,T]$.
\end{theorem}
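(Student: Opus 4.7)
The plan is to split the proof into two halves. First I verify that the averaged coefficients $f^*, g^*, \mathcal{K}^*$ inherit the assumptions (H1)--(H3) from $f, g, \mathcal{K}$ by exploiting the averaging bounds in (H5). Second, since \eqref{a22222} is then an instance of \eqref{t1} up to the constant scalars $\eta_1^*, \eta_2^*$ multiplying the viscous and convection/tame blocks, the existence, uniqueness, and $\mathcal{H}^1$-estimate will follow by replaying the argument of Theorem \ref{th1}; the condition $a_4 \le 2a_3$ enters at the energy estimate to control the size of the averaged tame coefficient relative to the averaged viscosity.

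For the inheritance, my recurring device will be an ``add and subtract'' identity around the time-average. For (H1) I plan to write
\begin{align*}
\langle u, f^*(u)\rangle_{L^2} = \frac{1}{T}\int_t^{t+T}\langle u, f(s,u)\rangle_{L^2}\,ds + \Big\langle u,\ f^*(u) - \frac{1}{T}\int_t^{t+T}f(s,u)\,ds\Big\rangle_{L^2},
\end{align*}
bound the first summand by (H1), the second by Cauchy-Schwarz together with (H5), and send $T\to\infty$ so that $\mathcal{R}_3(T)\to 0$; the same scheme (with Jensen applied to $\|\cdot\|^2$ inside the mean) delivers the bound on $\|g^*(u)\|^2$ and on both $\mathcal{H}^1$ versions. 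For (H2) I split $\langle u-v, f^*(u)-f^*(v)\rangle$ and $\|g^*(u)-g^*(v)\|^2$ analogously; because (H2) holds pointwise in $s$ with an $s$-independent right-hand side $c\mathcal{A}(\|u-v\|^2)$, I never need to pass a limit inside the concave $\mathcal{A}$, which is the key reason the decomposition closes. For (H3), Jensen applied to the representation $\mathcal{K}^*(x)=\lim_{T\to\infty}\tfrac{1}{T}\int\mathcal{K}(s,x)\,ds$ gives $\|\tfrac{1}{T}\int\mathcal{K}(s,x)\,ds-\mathcal{K}^*(x)\|\le\sqrt{\mathcal{R}_5(T)}$, whence $\|\mathcal{K}^*(x)\|\le\sqrt{a^*}+\sqrt{\mathcal{R}_5(T)}\to\sqrt{a^*}$, preserving the constant $a^*$ exactly; the derivative bound $|\partial_{x_i}\mathcal{K}^*|\le C$ follows by dominated convergence of the spatial derivative through the time-average.

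With (H1)--(H3) in hand, the final task is to apply the proof scheme of Theorem \ref{th1} to \eqref{a22222}. This is not quite immediate because the viscosity is $\eta_1^*$ (not $\nu$) and the convection/tame block is scaled by $\eta_2^*$; I will rerun \eqref{t3}--\eqref{t555} with these replacements, and the decisive cancellation of the $\||u||\nabla u|\|^2_{\mathcal{H}^0}$ contributions between the Young output of the convection estimate (which produces $\tfrac{(\eta_2^*)^2}{2\eta_1^*}$) and the negative tame contribution (which produces $\tfrac{\eta_2^*}{\nu}$) will be guaranteed precisely by the hypothesis $a_4\le 2a_3$. The Gronwall and B-D-G inequalities then yield the stated $\mathcal{H}^1$-estimate, while pathwise uniqueness follows by replaying verbatim the $\Gamma$-function contradiction argument already used in the proof of Theorem \ref{th1}. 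The step I expect to be the main technical obstacle is the inheritance of (H2) for $g^*$: one bounds $\|g^*(u)-g^*(v)\|^2$ by a squared decomposition rather than by passing limits inside $\mathcal{A}$, so I must verify carefully that the correction terms arising from (H5) vanish in quadratic form and that the concavity of $\mathcal{A}$ plays well with the triangle-type inequality used to split the Hilbert-Schmidt norm.
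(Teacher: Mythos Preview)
Your outline is correct and supplies far more detail than the paper, which defers the entire argument to Lemma~4.2 of \cite{ref9}. The add-and-subtract device combined with sending $T\to\infty$ is the standard route for transferring (H1)--(H3) to the averaged coefficients, and your plan to rerun the energy estimates \eqref{t3}--\eqref{t555}, \eqref{t7} of Theorem~\ref{th1} with $\eta_1^*,\eta_2^*$ in place of $\nu,1$ is the natural second half; the pathwise-uniqueness step via the $\Gamma$-function argument carries over verbatim since the modified drift and diffusion still satisfy the locally weak monotonicity structure.

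One small caution: your explanation of where $a_4\le 2a_3$ enters does not quite close. Both $a_3$ and $a_4$ are bounds on $\eta_2$ alone, so the condition restricts the \emph{oscillation} of $\eta_2$ rather than balancing the Young output $(\eta_2^*)^2/(2\eta_1^*)$ from the convection term against the tame output $\eta_2^*/\nu$; in the averaged equation $\eta_2^*$ is a single fixed constant and that balance reduces to $a_1\eta_2^*\le 2\eta_1^*$, which does not involve $a_3$ at all. The paper is not explicit about the precise role of the hypothesis either, so this is worth tracing in the cited reference rather than relying on the heuristic you give.
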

\begin{proof}
A similar proof can be found in Lemma 4.2 of \cite{ref9}.
\end{proof}

For any given function $\psi $, define a piecewise function $\overline{\psi } $  such that
\begin{align}\label{a5}
	\overline{\psi } (t)=\left\{\begin{matrix}
  \psi (0)&t\in [0,d),\\
   \psi(d)  & t\in [d,2d),\\
  ...&...\\
\psi (nd)&t\in [nd,(n+1)d),\\
   ...&...,
\end{matrix}\right.
	\end{align}
 where $n\in\mathbb{N} ^{+}$ and $d$ is a fixed constant. Without loss of generality, we let $d\le 1$. To delve deeper into the long-term asymptotic behavior of systems \eqref{a2} and \eqref{a222}, it is essential to introduce the ensuing lemma:
\begin{lemma}\label{le1}If $f$, $k$ and $g$ satisfy \textbf{(H1)}$-$\textbf{(H5)} and $a_{4}\le2a_{3}$, the following statements hold:  for any $T>0$ and  $u_{0} ^{\varepsilon }, u_{0} ^{* }\in\mathcal{H} ^{1}$,
\begin{align*}
		\mathbb{E} \int_{0}^{T} \left \| u^{\varepsilon }(s;u_{0} ^{\varepsilon }) -\overline{u}^{\varepsilon }(s;u_{0} ^{\varepsilon }) \right \|^{2}_{\mathcal{H} ^{1}} \text{d}s\le C_{T}(\left \| u^{\varepsilon }_{0}\right \|_{\mathcal{H} ^{1}}^{2}+\left \| u^{\varepsilon }_{0}\right \|_{\mathcal{H} ^{1}}^{6}+1)d^{\frac{1}{2} } ,
	\end{align*}
and
\begin{align*}
		\mathbb{E} \int_{0}^{T} \left \| u^{* }(s;u_{0} ^{* }) -\overline{u}^{*}(s;u_{0} ^{* }) \right \|^{2}_{\mathcal{H} ^{1}} \text{d}s\le C_{T}(\left \| u^{* }_{0}\right \|_{\mathcal{H} ^{1}}^{2}+\left \| u^{* }_{0}\right \|_{\mathcal{H} ^{1}}^{6}+1)d^{\frac{1}{2} },
	\end{align*}
where  $u^{\varepsilon }(t;u_{0} ^{\varepsilon })$  is the solution of \eqref{a2} with the initial value $u^{\varepsilon }(0)=u_{0} ^{\varepsilon }$ and
$u^{*}(t;u_{0} ^{*})$  is the solution of \eqref{a222} with the initial value $u^{*}(0)=u_{0} ^{*}$.
\end{lemma}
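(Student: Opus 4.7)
The plan is to bound $\|u^{\varepsilon}(s)-u^{\varepsilon}(\bar s)\|_{\mathcal{H}^1}^2$ for each fixed $s\in[0,T]$ with $\bar s:=\lfloor s/d\rfloor d$ via It\^o's formula in $\mathcal{H}^1$, and then integrate in $s$ by Fubini. Since $\eta_1^{\varepsilon}\ge a_1=\nu$, $\eta_2^{\varepsilon}\ge a_3$, and the rescaled coefficients of \eqref{a2} satisfy \textbf{(H1)}--\textbf{(H3)} uniformly in $\varepsilon$, Theorem \ref{th1} applies to \eqref{a2} and yields the $\varepsilon$-uniform a priori bound
\begin{align*}
\mathbb{E}\Big(\sup_{t\in[0,T]}\|u^{\varepsilon}(t)\|_{\mathcal{H}^1}^{2p}\Big)+\mathbb{E}\int_0^T\|u^{\varepsilon}\|_{\mathcal{H}^1}^{2p-2}\|u^{\varepsilon}\|_{\mathcal{H}^2}^{2}\,\text{d}t\le C_{T,p,N}(\|u_0^{\varepsilon}\|_{\mathcal{H}^1}^{2p}+1)
\end{align*}
for every $p\ge 1$, which will supply all the moments needed below.

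Setting $Z(t):=u^{\varepsilon}(t)-u^{\varepsilon}(\bar s)$ on $[\bar s,s]$, we have $Z(\bar s)=0$ and $Z$ obeys the same SPDE as $u^{\varepsilon}$. Applying It\^o's formula to $\|Z(t)\|_{\mathcal{H}^1}^2$ and taking expectation produces
\begin{align*}
\mathbb{E}\|Z(s)\|_{\mathcal{H}^1}^{2} = 2\mathbb{E}\int_{\bar s}^{s}\langle Z(r),\eta_1^{\varepsilon}S_1(u^{\varepsilon})-\eta_2^{\varepsilon}S_2(u^{\varepsilon})+f^{\varepsilon}\rangle_{\mathcal{H}^1}\,\text{d}r + \mathbb{E}\int_{\bar s}^{s}\|S_3^{\varepsilon}+g^{\varepsilon}\|_{\mathscr{L}(K,\mathcal{H}^1)}^{2}\,\text{d}r.
\end{align*}
Each drift pairing I would decompose as $\langle Z,X\rangle_{\mathcal{H}^1}=\langle u^{\varepsilon},X\rangle_{\mathcal{H}^1}-\langle u^{\varepsilon}(\bar s),X\rangle_{\mathcal{H}^1}$, bounding the first summand by the pointwise estimates \eqref{t3}, \eqref{t5}, \eqref{t555} and \textbf{(H1)} exactly as in the proof of Theorem \ref{th1}, and the second via duality together with \eqref{k1}, \eqref{k2}: the Stokes piece gives $|\langle u^{\varepsilon}(\bar s),\Delta u^{\varepsilon}\rangle_{\mathcal{H}^1}|\le\|u^{\varepsilon}(\bar s)\|_{\mathcal{H}^2}\|u^{\varepsilon}\|_{\mathcal{H}^2}$, while the convection, after using the trilinear cancellation $\langle u^{\varepsilon},(u^{\varepsilon},\nabla)u^{\varepsilon}\rangle_{L^2}=0$, reduces to expressions of the form $\|u^{\varepsilon}\|_{\mathcal{H}^2}^{1/2}\|u^{\varepsilon}\|_{\mathcal{H}^1}^{3/2}\|u^{\varepsilon}(\bar s)\|_{\mathcal{H}^1}$. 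The noise term is controlled by \eqref{t7}, and Young's inequality absorbs the resulting $\|u^{\varepsilon}\|_{\mathcal{H}^2}^{2}$ contributions into the dissipation coming from $\eta_1^{\varepsilon}S_1(u^{\varepsilon})$.

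Finally, integrating in $s\in[0,T]$ and swapping the order of integration by Fubini gives the key identity $\int_0^T\int_{\bar s(s)}^{s}F(r)\,\text{d}r\,\text{d}s\le d\int_0^T F(r)\,\text{d}r$, which contributes order $d$ for any nonnegative $F$ lying in $L^1(\Omega\times[0,T])$. For the terms that carry only a sublinear power of $\|u^{\varepsilon}\|_{\mathcal{H}^2}$ an additional H\"older step in $r$ is needed, and it is precisely this step that produces the dominant contribution of order $d^{1/2}$. The a priori estimate above with $p=1,2,3$ then closes every moment; the $p=3$ case produces the $\|u_0^{\varepsilon}\|_{\mathcal{H}^1}^{6}$ factor. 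For the averaged equation \eqref{a22222}, Theorem \ref{co1} guarantees that $f^{*}$, $g^{*}$, $\mathcal{K}^{*}$ still satisfy \textbf{(H1)}--\textbf{(H3)}, so the identical argument delivers the corresponding bound on $\mathbb{E}\int_0^{T}\|u^{*}(s)-\overline{u}^{*}(s)\|_{\mathcal{H}^1}^{2}\,\text{d}s$. The chief obstacle is matching the precise rate $d^{1/2}$: several terms naturally give $d$, but the nonlinear convection pairing (which prevents a uniform-in-time estimate on $\mathbb{E}\|u^{\varepsilon}(\bar s)\|_{\mathcal{H}^2}^{2}$) forces a H\"older-interpolation balance that degrades the rate to $d^{1/2}$, and the exponents appearing on $\|u_0^{\varepsilon}\|_{\mathcal{H}^1}$ must be bookkept carefully to land exactly on the power $6$.
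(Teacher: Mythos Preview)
Your plan has a real gap in the handling of the frozen reference point $\bar{s}=\lfloor s/d\rfloor d$. When you split the Stokes pairing as $\langle Z, S_1(u^\varepsilon(r))\rangle_{\mathcal{H}^1}=\langle u^\varepsilon(r),S_1(u^\varepsilon(r))\rangle_{\mathcal{H}^1}-\langle u^\varepsilon(\bar{s}),S_1(u^\varepsilon(r))\rangle_{\mathcal{H}^1}$, the cross term forces $\|u^\varepsilon(\bar{s})\|_{\mathcal{H}^2}\|u^\varepsilon(r)\|_{\mathcal{H}^2}$; after Young and integration in $(r,s)$ this produces
\[
\int_0^T\!\!\int_{\bar s}^{s}\|u^\varepsilon(\bar s)\|_{\mathcal{H}^2}^{2}\,\mathrm{d}r\,\mathrm{d}s
=\sum_{n}\tfrac{d^{2}}{2}\,\|u^\varepsilon(nd)\|_{\mathcal{H}^2}^{2}.
\]
This is a Riemann sum of $t\mapsto\|u^\varepsilon(t)\|_{\mathcal{H}^2}^{2}$, a function that Theorem~\ref{th1} only places in $L^{1}(\Omega\times[0,T])$; for a merely $L^{1}$ function there is no bound of the discrete sum by a constant multiple of the integral, so this term is uncontrolled. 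The H\"older/interpolation balance you invoke cannot help: any estimate that keeps the piecewise-constant $\bar s$ as the reference lands on the same Riemann sum, and there is no uniform-in-time control of $\mathbb{E}\|u^\varepsilon(t)\|_{\mathcal{H}^2}^{2}$ to fall back on. (The same obstruction appears in the $S_{2}$ pairing, not only in the convection part.)

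The paper sidesteps this by a different discretisation trick: it first compares $u^\varepsilon(s)$ with the \emph{shifted} point $u^\varepsilon(s-d)$, which varies continuously with $s$, so that after integrating in $s$ the dangerous term becomes $\int\|u^\varepsilon(s-d)\|_{\mathcal{H}^2}^{2}\,\mathrm{d}s$, a genuine time integral controlled by the energy estimate. The piecewise-constant difference is then recovered via the triangle inequality
$\|u^\varepsilon(s)-u^\varepsilon(nd)\|\le\|u^\varepsilon(s)-u^\varepsilon(s-d)\|+\|u^\varepsilon(s-d)-u^\varepsilon(nd)\|$,
and the second summand is handled by the same device on the shifted interval. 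The rate $d^{1/2}$ in the paper arises from a BDG bound on the stochastic integral after integrating in $s$ (see \eqref{a225}), not from the drift mechanism you describe---your observation that taking expectation annihilates the martingale for each fixed $s$ is correct, but the paper does not exploit it.
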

\begin{proof}
Let $N_{d}$ denote the integer part of  $\frac{T}{d} $ and by applying It$\hat{\text{o}} $'s formula, we have
\begin{align}\label{a222}
		&\mathbb{E} \int_{0}^{T}\left \| u^{\varepsilon }(s)-\bar{u}^{\varepsilon }(s)  \right \|^{2}_{\mathcal{H}^{1} }  \text{d}s\nonumber
\\&=\mathbb{E} \int_{0}^{d}\left \| u^{\varepsilon }(s)-u^{\varepsilon }_{0}  \right \|^{2}_{\mathcal{H}^{1} }  \text{d}s+\mathbb{E} \sum_{n=1}^{N_{d}-1} \int_{nd}^{(n+1)d}\left \| u^{\varepsilon }(s)-u^{\varepsilon }(nd)  \right \|^{2}_{\mathcal{H}^{1} }  \text{d}s\nonumber
\\&~~~+\mathbb{E} \int_{N_{d}d}^{T}\left \| u^{\varepsilon }(s)-u^{\varepsilon }(N_{d}d)  \right \|^{2}_{\mathcal{H}^{1} }  \text{d}s
\\&\le C_{T}(\left \| u^{\varepsilon }_{0}  \right \|^{2}_{\mathcal{H}^{1} }+1)d+2\mathbb{E} \sum_{n=1}^{N_{d}-1} \int_{nd}^{(n+1)d}\left \| u^{\varepsilon }(s)-u^{\varepsilon }(s-d)  \right \|^{2}_{\mathcal{H}^{1} }  \text{d}s\nonumber
\\&~~~+2\mathbb{E} \sum_{n=1}^{N_{d}-1} \int_{nd}^{(n+1)d}\left \| u^{\varepsilon }(s-d)-u^{\varepsilon }(nd)  \right \|^{2}_{\mathcal{H}^{1} }  \text{d}s.\nonumber
	\end{align}
Then, by \textbf{(H1)} and  Young's inequality, we have
\begin{align*}
		&\left \| u^{\varepsilon }(s)-u^{\varepsilon }(s-d)  \right \|^{2}_{\mathcal{H}^{1} }
\\&=\int_{s-d}^{s} [2\left \langle u^{\varepsilon }(r)-u^{\varepsilon }(s-d),\eta _{1}^{\varepsilon }(r)S_{1}(u^{\varepsilon }(r)) \right \rangle_{\mathcal{H}^{1}  }-2\left \langle u^{\varepsilon }(r)-u^{\varepsilon }(s-d),\eta _{2}^{\varepsilon }(r)S_{2}(u^{\varepsilon }(r)) \right \rangle_{\mathcal{H}^{1}  }
\\&~~~+2\left \langle u^{\varepsilon }(r)-u^{\varepsilon }(s-d),f^{\varepsilon }(r,u^{\varepsilon }(r)) \right \rangle_{\mathcal{H}^{1}  }+\left \| S^{\varepsilon}_{3}(r,x,u^{\varepsilon }(r)) \right \|^{2}_{\mathscr{L}(K,\mathcal{H} ^{1})}+\left \| g^{\varepsilon }(r,u^{\varepsilon }(r)) \right \|^{2}_{\mathscr{L}(K,\mathcal{H} ^{1})}]\text{d}r
\\&~~~+2\int_{s-d}^{s}\left \langle u^{\varepsilon }(r)-u^{\varepsilon }(s-d), S^{\varepsilon}_{3}(r,x,u^{\varepsilon }(r)) \text{d}W(r) \right \rangle_{\mathcal{H}^{1} }+2\int_{s-d}^{s}\left \langle u^{\varepsilon }(r)-u^{\varepsilon }(s-d), g^{\varepsilon }(r,u^{\varepsilon }(r))\text{d}W(r) \right \rangle_{\mathcal{H}^{1} }
\\&\le \int_{s-d}^{s} [2\left \langle u^{\varepsilon }(r)-u^{\varepsilon }(s-d),\eta _{1}^{\varepsilon }(r)S_{1}(u^{\varepsilon }(r)) \right \rangle_{\mathcal{H}^{1}  }-2\left \langle u^{\varepsilon }(r)-u^{\varepsilon }(s-d),\eta _{2}^{\varepsilon }(r)S_{2}(u^{\varepsilon }(r)) \right \rangle_{\mathcal{H}^{1}  }
\\&~~~+2\left \|u^{\varepsilon }(r)-u^{\varepsilon }(s-d) \right\| _{\mathcal{H}^{1} }\left \|f^{\varepsilon }(r,u^{\varepsilon }(r)) \right\| _{\mathcal{H}^{1} }+a^{*}\left \|  u^{\varepsilon }(r) \right \| _{\mathcal{H}^{2} }+C_{T}\left \|u^{\varepsilon }(r) \right\| ^{2}_{\mathcal{H}^{1} }+M]\text{d}r
\\&~~~+2\int_{s-d}^{s}\left \langle u^{\varepsilon }(r)-u^{\varepsilon }(s-d), S^{\varepsilon}_{3}(r,x,u^{\varepsilon }(r)) \text{d}W(r) \right \rangle_{\mathcal{H}^{0} }+2\int_{s-d}^{s}\left \langle u^{\varepsilon }(r)-u^{\varepsilon }(s-d), g^{\varepsilon }(r,u^{\varepsilon }(r))\text{d}W(r) \right \rangle_{\mathcal{H}^{0} }
,
	\end{align*}
where
\begin{align*}
		&2\int_{s-d}^{s} [\left \langle u^{\varepsilon }(r)-u^{\varepsilon }(s-d),\eta _{1}^{\varepsilon }(r)S_{1}(u^{\varepsilon }(r)) \right \rangle_{\mathcal{H}^{1}  }\text{d}r
\\&=2\int_{s-d}^{s} \left \langle (I-\Delta )(u^{\varepsilon }(r)-u^{\varepsilon }(s-d)),\eta _{1}^{\varepsilon }(r)S_{1}(u^{\varepsilon }(r)) \right \rangle_{\mathcal{H}^{0}  }\text{d}r
\\&\le \int_{s-d}^{s}[-2a_{1}\left \| \nabla u^{\varepsilon }(r) \right\| ^{2}_{\mathcal{H}^{0} }+2\eta ^{\varepsilon }_{1}(r)\left \langle  \nabla u^{\varepsilon }(s-d),\nabla u^{\varepsilon }(r) \right \rangle_{\mathcal{H}^{0} }]\text{d}r+2a_{2}\int_{s-d}^{s}\left \| u^{\varepsilon }(r) \right\| ^{2}_{\mathcal{H}^{2} }\text{d}r
\\&~~~+\int_{s-d}^{s}\left \| u^{\varepsilon }(s-d) \right\| ^{2}_{\mathcal{H}^{2} }\text{d}r
\\&\le C[\int_{s-d}^{s}\left \| u^{\varepsilon }(r) \right\| ^{2}_{\mathcal{H}^{2} }\text{d}r+\int_{s-d}^{s}\left \| u^{\varepsilon }(s-d) \right\| ^{2}_{\mathcal{H}^{2} }\text{d}r],
	\end{align*}
and by H$\ddot{\text{o}}$lder's inequality and \eqref{k2}, we have
\begin{align*}
		&-2\int_{s-d}^{s} [\left \langle u^{\varepsilon }(r)-u^{\varepsilon }(s-d),\eta _{2}^{\varepsilon }(r)S_{2}(u^{\varepsilon }(r)) \right \rangle_{\mathcal{H}^{1}  }\text{d}r
\\&=-2\int_{s-d}^{s} \left \langle (I-\Delta )(u^{\varepsilon }(r)-u^{\varepsilon }(s-d)),\eta _{2}^{\varepsilon }(r)S_{2}(u^{\varepsilon }(r)) \right \rangle_{\mathcal{H}^{0}  }\text{d}r
\\&\le C(\int_{s-d}^{s}\left \| u^{\varepsilon }(r)-u^{\varepsilon }(s-d) \right \|^{2}_{\mathcal{H}^{2}  }\text{d}r )^{\frac{1}{2} }(\int_{s-d}^{s}\left \| \left | u^{\varepsilon }(r) \right |\left | \nabla u^{\varepsilon }(r) \right | \right \|^{2}_{\mathcal{H}^{0}}\text{d}r)^{\frac{1}{2} }
\\&~~~+C\int_{s-d}^{s}\left \| u^{\varepsilon }(r)-u^{\varepsilon }(s-d) \right \|_{\mathcal{H}^{2}} \left \| \Psi _{N}(\left | u^{\varepsilon }(r) \right |^{2} )u^{\varepsilon }(r) \right \|_{\mathcal{H}^{0}}\text{d}r
\\&\le C[\int_{s-d}^{s}\left \| u^{\varepsilon }(r) \right\| ^{2}_{\mathcal{H}^{2} }\text{d}r+\int_{s-d}^{s}\left \| u^{\varepsilon }(s-d) \right\| ^{2}_{\mathcal{H}^{2} }\text{d}r+\int_{s-d}^{s}\left \| \left | u^{\varepsilon }(r) \right |\left | \nabla u^{\varepsilon }(r) \right | \right \|^{2}_{\mathcal{H}^{0}}\text{d}r
\\&~~~+\int_{s-d}^{s} \left \| u^{\varepsilon }(r) \right \|^{4}_{\mathcal{H}^{1}} \left \| u^{\varepsilon }(r) \right \|^{2}_{\mathcal{H}^{2}}\text{d}r].
	\end{align*}
With the help of Fubini’s theorem, we obtain
\begin{align}\label{a223}
		&\mathbb{E} \int_{nd}^{(n+1)d}\left \| u^{\varepsilon }(s)-u^{\varepsilon }(s-d)  \right \|^{2}_{\mathcal{H}^{0} }   \text{d}s\nonumber
\\&\le dC\mathbb{E} \int_{nd}^{(n+1)d}\left \| u^{\varepsilon }(s-d) \right\| ^{2}_{\mathcal{H}^{2} } \text{d}s+C\mathbb{E} \int_{nd}^{(n+1)d}\int_{s-d}^{s}[\left \|   u^{\varepsilon }(r) \right\| ^{2 }_{\mathcal{H}^{2} }+1]\text{d}r  \text{d}s\nonumber
\\&~~~+\mathbb{E} \int_{nd}^{(n+1)d}\int_{s-d}^{s}\left \| \left | u^{\varepsilon }(r) \right |\left | \nabla u^{\varepsilon }(r) \right | \right \|^{2}_{\mathcal{H}^{0}}+ \left \| u^{\varepsilon }(r) \right \|^{4}_{\mathcal{H}^{1}} \left \| u^{\varepsilon }(r) \right \|^{2}_{\mathcal{H}^{2}}\text{d}r\text{d}s\nonumber
\\&~~~+2\mathbb{E} \int_{nd}^{(n+1)d}[\int_{s-d}^{s}\left \langle u^{\varepsilon }(r)-u^{\varepsilon }(s-d), S^{\varepsilon}_{3}(r,x,u^{\varepsilon }(r)) \text{d}W(r) \right \rangle_{\mathcal{H}^{1} }\nonumber
\\&~~~+\int_{s-d}^{s}\left \langle u^{\varepsilon }(r)-u^{\varepsilon }(s-d), g^{\varepsilon }(r,u^{\varepsilon }(r))\text{d}W(r) \right \rangle_{\mathcal{H}^{1} }]\text{d}s
\\&\le dC[\mathbb{E} \int_{(n-1)d}^{(n+1)d}[\left \|   u^{\varepsilon }(r) \right\| ^{2 }_{\mathcal{H}^{2} }+1]\text{d}r+\mathbb{E} \int_{(n-1)d}^{(n+1)d}\left \| \left | u^{\varepsilon }(r) \right |\left | \nabla u^{\varepsilon }(r) \right | \right \|^{2}_{\mathcal{H}^{0}}+ \left \| u^{\varepsilon }(r) \right \|^{4}_{\mathcal{H}^{1}} \left \| u^{\varepsilon }(r) \right \|^{2}_{\mathcal{H}^{2}}\text{d}r] \nonumber
\\&~~~+2\mathbb{E} \int_{nd}^{(n+1)d}[\int_{s-d}^{s}\left \langle u^{\varepsilon }(r)-u^{\varepsilon }(s-d), S^{\varepsilon}_{3}(r,x,u^{\varepsilon }(r)) \text{d}W(r) \right \rangle_{\mathcal{H}^{1} }\nonumber
\\&~~~+\int_{s-d}^{s}\left \langle u^{\varepsilon }(r)-u^{\varepsilon }(s-d), g^{\varepsilon }(r,u^{\varepsilon }(r))\text{d}W(r) \right \rangle_{\mathcal{H}^{1} }]\text{d}s.\nonumber
	\end{align}
Furthermore, by applying the B-D-G inequality, \textbf{(H2)}, \textbf{(H3)}, H$\ddot{\text{o}}$lder's inequality and Jensen’s inequality we have
\begin{align}\label{a225}
		&\mathbb{E} \int_{nd}^{(n+1)d}[\int_{s-d}^{s}\left \langle u^{\varepsilon }(r)-u^{\varepsilon }(s-d), S^{\varepsilon}_{3}(r,x,u^{\varepsilon }(r)) \text{d}W(r) \right \rangle_{\mathcal{H}^{1} }\nonumber
\\&+\int_{s-d}^{s}\left \langle u^{\varepsilon }(r)-u^{\varepsilon }(s-d), g^{\varepsilon }(r,u^{\varepsilon }(r))\text{d}W(r) \right \rangle_{\mathcal{H}^{1} }]\text{d}s\nonumber
\\&\le C\int_{nd}^{(n+1)d} [\mathbb{E} (\int_{s-d}^{s}\left \| u^{\varepsilon }(r)-u^{\varepsilon }(s-d) \right \|_{\mathcal{H} ^{2}}^{2} \left \| S^{\varepsilon}_{3}(r,x,u^{\varepsilon }(r)) \right \|^{2}_{\mathscr{L}(K,\mathcal{H} ^{0})}\text{d}r)^{\frac{1}{2} }\nonumber
\\&~~~+\mathbb{E} (\int_{s-d}^{s}\left \|  u^{\varepsilon }(r)-u^{\varepsilon }(s-d) \right \|_{\mathcal{H} ^{1}}^{2} \left \| g^{\varepsilon }(r,u^{\varepsilon }(r)) \right \|^{2}_{\mathscr{L}(K,\mathcal{H} ^{1})}\text{d}r)^{\frac{1}{2} }]\text{d}s \nonumber
\\&\le C\int_{nd}^{(n+1)d} [(\mathbb{E} \sup_{r\in[0,T]}\left \| u^{\varepsilon }(r)\right \|_{\mathcal{H} ^{2}}^{2}) ^{\frac{1}{2} } (\mathbb{E}\int_{s-d}^{s} \left \|\nabla u^{\varepsilon }(r)  \right \| _{\mathcal{H} ^{0}}^{2} \text{d}r)^{\frac{1}{2} }
\\&~~~+(\mathbb{E} \sup_{r\in[0,T]}\left \| u^{\varepsilon }(r)\right \|_{\mathcal{H} ^{1}}^{2}+1) ^{\frac{1}{2} } (\mathbb{E}\int_{s-d}^{s}(\left \| u^{\varepsilon }(r)\right \|_{\mathcal{H} ^{1}}^{2}+\left \|u^{\varepsilon }(s-d) \right \|_{\mathcal{H} ^{1}}^{2}) \text{d}r)^{\frac{1}{2} }]\text{d}s \nonumber
\\&\le C(\left \| u^{\varepsilon }_{0}\right \|_{\mathcal{H} ^{1}}^{2}+1)[ (\mathbb{E}\int_{nd}^{(n+1)d} \int_{s-d}^{s} \left \|\nabla u^{\varepsilon }(r)  \right \| _{\mathcal{H} ^{0}}^{2} \text{d}r\text{d}s)^{\frac{1}{2} } \nonumber
\\&~~~ +d^{\frac{1}{2} }(\mathbb{E}\int_{nd}^{(n+1)d} \int_{s-d}^{s}(\left \| u^{\varepsilon }(r)\right \|_{\mathcal{H} ^{1}}^{2}+\left \|u^{\varepsilon }(s-d) \right \|_{\mathcal{H} ^{1}}^{2}) \text{d}r\text{d}s)^{\frac{1}{2} }]\nonumber
\\&\le d^{\frac{1}{2} }C(\left \| u^{\varepsilon }_{0}\right \|_{\mathcal{H} ^{1}}^{2}+1)(\mathbb{E}\int_{(n-1)d}^{(n+1)d}  \left \| u^{\varepsilon }(r)  \right \| _{\mathcal{H} ^{1}}^{2} \text{d}r)^{\frac{1}{2} }
.\nonumber
	\end{align}
Thus, from \eqref{t23}, we have
\begin{align}\label{a226}
		&\mathbb{E} \sum_{n=1}^{N_{d}-1} \int_{nd}^{(n+1)d}\left \| u^{\varepsilon }(s)-u^{\varepsilon }(s-d)  \right \|^{2}_{\mathcal{H}^{0} }  \text{d}s\nonumber
\\&\le dC[\mathbb{E} \int_{0}^{T}[\left \|   u^{\varepsilon }(r) \right\| ^{2 }_{\mathcal{H}^{2} }+1]\text{d}r+\mathbb{E} \int_{0}^{T}\left \| \left | u^{\varepsilon }(r) \right |\left | \nabla u^{\varepsilon }(r) \right | \right \|^{2}_{\mathcal{H}^{0}}+ \left \| u^{\varepsilon }(r) \right \|^{4}_{\mathcal{H}^{1}} \left \| u^{\varepsilon }(r) \right \|^{2}_{\mathcal{H}^{2}}\text{d}r]\nonumber
\\&~~~+d^{\frac{1}{2} }C(\left \| u^{\varepsilon }_{1}\right \|_{\mathcal{H} ^{1}}^{2}+1)\sum_{n=1}^{N_{d}-1}(\mathbb{E}\int_{(n-1)d}^{(n+1)d}  \left \| u^{\varepsilon }(r)  \right \| _{\mathcal{H} ^{1}}^{2} \text{d}r)^{\frac{1}{2} }
\\&\le dC_{T}(\left \| u^{\varepsilon }_{0}\right \|_{\mathcal{H} ^{1}}^{2}+\left \| u^{\varepsilon }_{0}\right \|_{\mathcal{H} ^{1}}^{6}+1)+d^{\frac{1}{2} }C(\left \| u^{\varepsilon }_{0}\right \|_{\mathcal{H} ^{1}}^{2}+1)(\mathbb{E}\int_{0}^{T}  \left \| u^{\varepsilon }(r)  \right \| _{\mathcal{H} ^{1}}^{2} \text{d}r)^{\frac{1}{2} }\nonumber
\\&\le d^{\frac{1}{2} }C_{T}(\left \| u^{\varepsilon }_{0}\right \|_{\mathcal{H} ^{1}}^{2}+\left \| u^{\varepsilon }_{0}\right \|_{\mathcal{H} ^{1}}^{6}+1).\nonumber
	\end{align}
Similarly, we have
\begin{align}\label{a227}
		\mathbb{E} \sum_{n=1}^{N_{d}-1} \int_{nd}^{(n+1)d}\left \| u^{\varepsilon }(s-d)-u^{\varepsilon }(nd)  \right \|^{2}_{\mathcal{H}^{1} }  \text{d}s\le d^{\frac{1}{2} }C_{T}(\left \| u^{\varepsilon }_{0}\right \|_{\mathcal{H} ^{1}}^{2}+\left \| u^{\varepsilon }_{0}\right \|_{\mathcal{H} ^{2}}^{6}+1).
	\end{align}
 Substituting  \eqref{a226} and \eqref{a227} into \eqref{a222} gives
\begin{align}\label{a228}
		\mathbb{E} \int_{0}^{T}\left \| u^{\varepsilon }(s;u^{\varepsilon }_{0})-\bar{u}^{\varepsilon }(s;u^{\varepsilon }_{0})  \right \|^{2}_{\mathcal{H}^{1} }  \text{d}s\le C_{T}(\left \| u^{\varepsilon }_{0}\right \|_{\mathcal{H} ^{1}}^{2}+\left \| u^{\varepsilon }_{0}\right \|_{\mathcal{H} ^{1}}^{6}+1)d^{\frac{1}{2} }.
	\end{align}
Then, by applying Theorem \ref{co1} and  following the same steps as in the proof of \eqref{a228}, we have
\begin{align}\label{a229}
		\mathbb{E} \int_{0}^{T} \left \| u^{* }(s;u_{0} ^{* }) -\overline{u}^{*}(s;u_{0} ^{* }) \right \|^{2}_{\mathcal{H} ^{1}} \text{d}s\le C_{T}(\left \| u_{0} ^{* } \right \|^{2}_{\mathcal{H} ^{1}}+\left \| u_{0} ^{* } \right \|^{6}_{\mathcal{H} ^{2}}+1 )d^{\frac{1}{2} }.
	\end{align}

\end{proof}

Now we establish the following the averaging principle of  stochastic tamed 3D Navier–Stokes equations  with  locally weak
monotonicity coefficients. Below we need additional conditions:
\\\textbf{(H6)}    The functions $f$, $g$ satisfy,  for all $t\in[0,T]$ and $u,v\in L^{2}(\mathbb{D} ;\mathbb{R}^{3})  $ with $\left \| u-v \right \| _{L^{2}}\le\zeta$,
\begin{align*}
		\|f(t,u)- f(t,v )\|^{2}_{L^{2}}  \le c\mathcal{A}(\left \| u -v  \right \|^{2} _{L^{2}}),
	\end{align*}
and for all $t\in[0,T]$ and $u,v\in \mathcal{H}^{1}(\mathbb{D} ;\mathbb{R}^{3})  $ with $\left \| u-v \right \| _{\mathcal{H}^{1}}\le\zeta $,
\begin{align*}
\|f(t,u)- f(t,v )\|^{2}_{\mathcal{H}^{1}}  \le c\mathcal{A}(\left \| u -v  \right \|^{2} _{\mathcal{H}^{1}}).
	\end{align*}
\begin{theorem}\label{th2} Consider \eqref{a2} and \eqref{a222}. Suppose that the assumptions \textbf{(H1)}-\textbf{(H6)} hold and  $a_{4}\le2a_{3}$.  For any initial values $u_{0}^{\varepsilon }, u_{0} ^{*}\in \mathcal{H} ^{1}$ and $T > 0$, assume further that $\lim_{\varepsilon  \to 0} \left \|u_{0}^{\varepsilon }- u_{0} ^{*} \right \|^{2}_{\mathcal{H} ^{0}}=0$. Then, we have
\begin{align}\label{c1}
		\lim_{\varepsilon  \to 0} \mathbb{E}\underset{t\in [0,T]}{\sup}   \left \|u^{\varepsilon }(t;u_{0}^{\varepsilon }) -u^{* }(t;u_{0} ^{*}) \right \|^{2}_{\mathcal{H} ^{0}}=0.
	\end{align}
\end{theorem}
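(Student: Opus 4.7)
The plan is to combine Khasminskii's classical time-discretization scheme with the specialized control-function argument that already appeared in Step~1 of Theorem~\ref{th1}. First, I apply It$\hat{\text{o}}$'s formula to $\|u^\varepsilon(t)-u^*(t)\|^2_{\mathcal{H}^0}$. Because the coefficients satisfy only the locally weak monotonicity assumption \textbf{(H2)} and the nonlinear convection term is not controlled in $\mathcal{H}^0$ alone, I introduce the stopping times
\begin{align*}
\tau_R=\inf\{t\ge 0:\|u^\varepsilon(t)\|_{\mathcal{H}^1}\vee\|u^*(t)\|_{\mathcal{H}^1}>R\},\qquad \tau_\beta=\inf\{t\ge 0:\|u^\varepsilon(t)-u^*(t)\|_{\mathcal{H}^0}>\beta\},
\end{align*}
exactly as in Step~2 of Theorem~\ref{th1}, and later remove them with the aid of the a priori estimate~\eqref{t111}.

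On each subinterval $[nd,(n+1)d]$ I insert the piecewise-constant surrogate $\overline{u}^\varepsilon$ from~\eqref{a5} into every oscillating coefficient. Schematically,
\begin{align*}
f^\varepsilon(s,u^\varepsilon(s))-f^*(u^*(s))=[f^\varepsilon(s,u^\varepsilon(s))-f^\varepsilon(s,\overline{u}^\varepsilon(s))]+[f^\varepsilon(s,\overline{u}^\varepsilon(s))-f^*(\overline{u}^\varepsilon(s))]+[f^*(\overline{u}^\varepsilon(s))-f^*(u^*(s))],
\end{align*}
and analogous splittings are applied to $\eta_1^\varepsilon$, $\eta_2^\varepsilon$, $g^\varepsilon$ and $\mathcal{K}^\varepsilon$. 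The first and third pieces are absorbed using Lemma~\ref{le1} together with \textbf{(H6)}, whereas the middle piece, where $\overline{u}^\varepsilon$ is frozen on each cell, is bounded by $\mathcal{R}_i(d/\varepsilon)$ through \textbf{(H4)}--\textbf{(H5)}. Young's inequality, the Burkholder-Davis-Gundy inequality, \eqref{k1} and \eqref{k2} are used as in~\eqref{t21} so that the convection, taming and transport-noise contributions are absorbed on the left thanks to the $\tau_R$-localization.

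These estimates consolidate into an inequality of the form
\begin{align*}
\psi^\varepsilon(t)\le \|u_0^\varepsilon-u_0^*\|^2_{\mathcal{H}^0}+E(\varepsilon,d)+C_{T,R,\nu}\int_0^t[\mathcal{A}(\psi^\varepsilon(s))+\psi^\varepsilon(s)]\,\text{d}s,
\end{align*}
where $\psi^\varepsilon(t):=\mathbb{E}\sup_{r\le t}\|u^\varepsilon(r\wedge\tau_R\wedge\tau_\beta)-u^*(r\wedge\tau_R\wedge\tau_\beta)\|^2_{\mathcal{H}^0}$ and $E(\varepsilon,d)\to 0$ when first $\varepsilon\to 0$ and then $d\to 0$. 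Since Gronwall is unavailable under \textbf{(H2)}, I reapply the trick of \eqref{r13}--\eqref{r15}: setting $\Gamma(t)=\int_{\iota}^t(\mathcal{A}(s)+s)^{-1}\,\text{d}s$, I obtain $\Gamma(\psi^\varepsilon(t))\le \Gamma(\|u_0^\varepsilon-u_0^*\|^2_{\mathcal{H}^0}+E(\varepsilon,d))+C_{T,R,\nu}t$, and the property $\Gamma(0^+)=-\infty$ forces $\psi^\varepsilon(t)\to 0$ as $\varepsilon,d\to 0$ in the prescribed order. The stopping times are then peeled off exactly as after~\eqref{i111}: Chebyshev together with~\eqref{t111} dispatches $\tau_R$, and the definition of $\tau_\beta$ itself handles the latter.

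The main obstacle is the coupling between the discretization error $E(\varepsilon,d)$ and the absence of Gronwall: one cannot simply send $d\to 0$ to kill $E$, because the integral on the right still carries the concave $\mathcal{A}$. The $\Gamma$-function has to be applied while $d$ is still positive, and only afterwards can the double limit be taken so that the vanishing initial-data contribution dominates. A secondary difficulty is that the convection and taming terms in $\mathcal{H}^0$ force the stopping time $\tau_R$ to be measured in $\mathcal{H}^1$, which is why the full a priori bound~\eqref{t111} (rather than the weaker Remark~\ref{re1}) is indispensable.
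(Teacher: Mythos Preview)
Your proposal is correct and follows essentially the same route as the paper: It\^o's formula in $\mathcal{H}^0$, localization, Khasminskii time discretization via Lemma~\ref{le1}, hypotheses \textbf{(H4)}--\textbf{(H5)} on the frozen pieces, and the $\Gamma$-function argument from \eqref{r13}--\eqref{r15} in place of Gronwall. Two small refinements bring it in line with the paper's execution. First, the paper couples $d$ to $\varepsilon$ by setting $d=\varepsilon^{1/2}$, so that a \emph{single} limit $\varepsilon\to 0$ forces both $d\to 0$ and $d/\varepsilon\to\infty$; your iterated ``first $\varepsilon$, then $d$'' description is awkward because $\psi^\varepsilon$ does not depend on $d$ and the theorem only concerns $\varepsilon\to 0$. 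Second, in the paper the averaging is done by freezing the \emph{averaged} solution $\overline{u}^*$ rather than $\overline{u}^\varepsilon$ (cf.\ the treatment of $\Sigma_3$); either choice works, but freezing $u^*$ lets the very first split $\langle u^\varepsilon-u^*,f^\varepsilon(s,u^\varepsilon)-f^\varepsilon(s,u^*)\rangle_{\mathcal{H}^0}$ invoke \textbf{(H2)} directly and produces the $\mathcal{A}(\psi^\varepsilon)$ term without an intermediate Cauchy--Schwarz plus \textbf{(H6)}. Your inclusion of $\tau_\beta$ is harmless and arguably more careful than the paper, which localizes only with $\tau_R$.
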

\begin{proof} Let $\tau _{R}=\underset{t\in[0,T]}{\inf} \{\left \| u^{\varepsilon}(t;u^{\varepsilon}_{0} ) \right \| _{\mathcal{H} ^{1}}^{2}\vee \left \| u^{*}(t;u^{*}_{0}) \right \| _{\mathcal{H} ^{1}}^{2}>R \}$. By applying It$\hat{\text{o}} $'s formula formula to $u^{\varepsilon }(t;u_{0} ^{\varepsilon }) -u^{* }(t;u_{0}^{*})$, and utilizing \textbf{(H3)}, Young's inequality and the B-D-G inequality, we obtain
\begin{align}\label{t333}
		 & \mathbb{E} (\underset{t\in[0,T\wedge \tau _{R}]}{\sup}\left \| u^{\varepsilon}(t;u^{\varepsilon}_{0} )-u^{*}(t;u^{*}_{0}) \right \| _{\mathcal{H} ^{0}}^{2}) \nonumber
\\&=\left \| u^{\varepsilon}_{0}-u^{*}_{0} \right \| _{\mathcal{H} ^{0}}^{2}+\mathbb{E}\underset{t\in[0,T\wedge \tau _{R}]}{\sup}\int_{0}^{t} [2 \left \langle u^{\varepsilon}(s)-u^{*}(s),\eta _{1}^{\varepsilon } (s)S_{1}(u^{\varepsilon}(s) )-\eta _{1}^{* } S_{1}(u^{*}(s) ) \right \rangle _{\mathcal{H} ^{0}}\nonumber
\\&~~~-2  \left \langle u^{\varepsilon}(s)-u^{*}(s),\eta _{2}^{\varepsilon } (s)S_{2}(u^{\varepsilon}(s) )-\eta _{2}^{* } S_{2}(u^{*}(s) ) \right \rangle _{\mathcal{H} ^{0}}+2 \left \langle u^{\varepsilon}(s)-u^{*}(s),f^{\varepsilon}(s,u^{\varepsilon}(s))-f^{*}(u^{*}(s)) \right \rangle _{\mathcal{H} ^{0}}\nonumber
\\&~~~+\left \| S^{\varepsilon}_{3}(s,x,u^{\varepsilon }(s))-S^{*}_{3}(x,u^{*}(s)) \right \|_{\mathscr{L}(K,\mathcal{H} ^{0})} ^{2}+\left \| g^{\varepsilon}(s,u^{\varepsilon}(s))-g^{*}(u^{*}(s)) \right \|_{\mathscr{L}(K,\mathcal{H} ^{0})} ^{2}]\text{d}s   \nonumber
\\&~~~+2\mathbb{E} (\underset{t\in[0,T\wedge \tau _{R}]}{\sup}\int_{0}^{t}\left \langle u^{\varepsilon}(s)-u^{*}(s),[G^{\varepsilon}(t,u^{\varepsilon}(s))-G^{*}(u^{*}(s))]\text{d}W(s) \right \rangle _{\mathcal{H} ^{0} })
\\&\le \left \| u^{\varepsilon}_{0}-u^{*}_{0} \right \| _{\mathcal{H} ^{0}}^{2}+\mathbb{E}\underset{t\in[0,T\wedge \tau _{R}]}{\sup}\int_{0}^{t} [2 \left \langle u^{\varepsilon}(s)-u^{*}(s),\eta _{1}^{\varepsilon } (s)S_{1}(u^{\varepsilon}(s) )-\eta _{1}^{* } S_{1}(u^{*}(s) ) \right \rangle _{\mathcal{H} ^{0}}\nonumber
\\&~~~-2  \left \langle u^{\varepsilon}(s)-u^{*}(s),\eta _{2}^{\varepsilon } (s)S_{2}(u^{\varepsilon}(s) )-\eta _{2}^{* } S_{2}(u^{*}(s) ) \right \rangle _{\mathcal{H} ^{0}}\nonumber
\\&~~~+2 \left \langle u^{\varepsilon}(s)-u^{*}(s),f^{\varepsilon}(s,u^{\varepsilon}(s))-f^{*}(u^{*}(s)) \right \rangle _{\mathcal{H} ^{0}}]\text{d}s +\frac{1}{2}\mathbb{E} (\underset{t\in[0,T\wedge \tau _{R}]}{\sup}\left \| u^{\varepsilon}(t;u^{\varepsilon}_{0} )-u^{*}(t;u^{*}_{0}) \right \| _{\mathcal{H} ^{0}}^{2})  \nonumber
\\&~~~+73\mathbb{E} (\int_{0}^{T\wedge \tau _{R}}[\left \| S^{\varepsilon}_{3}(s,x,u^{\varepsilon }(s))-S^{*}_{3}(x,u^{*}(s)) \right \|_{\mathscr{L}(K,\mathcal{H} ^{0})} ^{2}+\left \| g^{\varepsilon}(s,u^{\varepsilon}(s))-g^{*}(u^{*}(s)) \right \|_{\mathscr{L}(K,\mathcal{H} ^{0})} ^{2}]\text{d}s, \nonumber
	\end{align}
which implies that
 \begin{align}\label{t335}
		 & \mathbb{E} (\underset{t\in[0,T\wedge \tau _{R}]}{\sup}\left \| u^{\varepsilon}(t;u^{\varepsilon}_{0} )-u^{*}(t;u^{*}_{0}) \right \| _{\mathcal{H} ^{0}}^{2}) \nonumber
 \\&\le 2\left \| u^{\varepsilon}_{0}-u^{*}_{0} \right \| _{\mathcal{H} ^{0}}^{2}+\mathbb{E}\underset{t\in[0,T\wedge \tau _{R}]}{\sup}\int_{0}^{t} [4 \left \langle u^{\varepsilon}(s)-u^{*}(s),\eta _{1}^{\varepsilon } (s)S_{1}(u^{\varepsilon}(s) )-\eta _{1}^{* } S_{1}(u^{*}(s) ) \right \rangle _{\mathcal{H} ^{0}}\nonumber
\\&~~~-4  \left \langle u^{\varepsilon}(s)-u^{*}(s),\eta _{2}^{\varepsilon } (s)S_{2}(u^{\varepsilon}(s) )-\eta _{2}^{* } S_{2}(u^{*}(s) ) \right \rangle _{\mathcal{H} ^{0}}
\\&~~~+4 \left \langle u^{\varepsilon}(s)-u^{*}(s),f^{\varepsilon}(s,u^{\varepsilon}(s))-f^{*}(u^{*}(s)) \right \rangle _{\mathcal{H} ^{0}}]\text{d}s   \nonumber
\\&~~~+146\mathbb{E} (\int_{0}^{T\wedge \tau _{R}}[\left \| S^{\varepsilon}_{3}(s,x,u^{\varepsilon }(s))-S^{*}_{3}(x,u^{*}(s)) \right \|_{\mathscr{L}(K,\mathcal{H} ^{0})} ^{2}+\left \| g^{\varepsilon}(s,u^{\varepsilon}(s))-g^{*}(u^{*}(s)) \right \|_{\mathscr{L}(K,\mathcal{H} ^{0})} ^{2}]\text{d}s \nonumber
\\&:=2\left \| u^{\varepsilon}_{0}-u^{*}_{0} \right \| _{\mathcal{H} ^{0}}^{2}+4\Sigma _{1}+4\Sigma _{2}+4\Sigma _{3}+146\Sigma _{4}+146\Sigma _{5}. \nonumber
	\end{align}
Then for $\Sigma _{1}$,
\begin{align}\label{s1}
	&\mathbb{E}\underset{t\in [0,T]}{\sup}\int_{0}^{t}\left \langle u^{\varepsilon}(s)-u^{*}(s),\eta _{1}^{\varepsilon } (s)S_{1}(u^{\varepsilon}(s) )-\eta _{1}^{* } S_{1}(u^{*}(s) ) \right \rangle _{\mathcal{H} ^{0}}\text{d}s\nonumber
\\&= \mathbb{E}\underset{t\in [0,T]}{\sup}\int_{0}^{t}\left \langle u^{\varepsilon}(s)-u^{*}(s),\eta _{1}^{\varepsilon } (s)S_{1}(u^{\varepsilon}(s) )-\eta _{1}^{\varepsilon}(s) S_{1}(u^{*}(s) ) \right \rangle _{\mathcal{H} ^{0}}\text{d}s
\\&~~~+\mathbb{E}\underset{t\in [0,T]}{\sup}\int_{0}^{t}\left \langle u^{\varepsilon}(s)-u^{*}(s),\eta _{1}^{\varepsilon } (s)S_{1}(u^{*}(s) )-\eta _{1}^{* } S_{1}(u^{*}(s) ) \right \rangle _{\mathcal{H} ^{0}}\text{d}s\nonumber
\\&:=\Sigma _{1}^{1}+\Sigma _{1}^{2}. \nonumber
	\end{align}
By  \eqref{t3}, we have
\begin{align}\label{s2}
	\Sigma _{1}^{1}&=\mathbb{E}\underset{t\in [0,T]}{\sup}\int_{0}^{t}\left \langle u^{\varepsilon}(s)-u^{*}(s),\eta _{1}^{\varepsilon } (s)S_{1}(u^{\varepsilon}(s) )-\eta _{1}^{\varepsilon}(s) S_{1}(u^{*}(s) ) \right \rangle _{\mathcal{H} ^{0}}\text{d}s \nonumber
\\&=-a_{1}\mathbb{E}\underset{t\in [0,T]}{\sup}\int_{0}^{t}\left \| \nabla u^{\varepsilon}(s)-\nabla u^{*}(s) \right \| ^{2}_{\mathcal{H} ^{0}}\text{d}s,
	\end{align}
then
\begin{align}\label{s3}
	\Sigma _{1}^{2}&=\mathbb{E}\underset{t\in [0,T]}{\sup}\int_{0}^{t}\left \langle u^{\varepsilon}(s)-u^{*}(s),\eta _{1}^{\varepsilon } (s)S_{1}(u^{*}(s) )-\eta _{1}^{* } S_{1}(u^{*}(s) ) \right \rangle _{\mathcal{H} ^{0}}\text{d}s,\nonumber
\\&\le \mathbb{E}\underset{t\in [0,T]}{\sup}\int_{0}^{t}\left \langle u^{\varepsilon}(s)-\overline{u }^{\varepsilon}(s),\eta _{1}^{\varepsilon } (s)S_{1}(u^{*}(s) )-\eta _{1}^{* } S_{1}(u^{*}(s) ) \right \rangle _{\mathcal{H} ^{0}}\text{d}s\nonumber
\\&~~~+\mathbb{E}\underset{t\in [0,T]}{\sup}\int_{0}^{t}\left \langle \overline{u }^{\varepsilon}(s)-\overline{u}^{*}(s),\eta _{1}^{\varepsilon } (s)S_{1}(u^{*}(s) )-\eta _{1}^{* } S_{1}(u^{*}(s) ) \right \rangle _{\mathcal{H} ^{0}}\text{d}s
\\&~~~+\mathbb{E}\underset{t\in [0,T]}{\sup}\int_{0}^{t}\left \langle \overline{u}^{*}(s)-u^{*}(s),\eta _{1}^{\varepsilon } (s)S_{1}(u^{*}(s) )-\eta _{1}^{* } S_{1}(u^{*}(s) ) \right \rangle _{\mathcal{H} ^{0}}\text{d}s\nonumber
\\&:=\Sigma _{1}^{2,1}+\Sigma _{1}^{2,2}+\Sigma _{1}^{2,3}.\nonumber
	\end{align}
For $\Sigma _{1}^{2,1}$, by H\"older's inequality, \eqref{t23} and Lemma \ref{le1}, we get
\begin{align}\label{s5}
	\Sigma _{1}^{2,1}&=\mathbb{E}\underset{t\in [0,T]}{\sup}\int_{0}^{t}\left \langle u^{\varepsilon}(s)-\overline{u }^{\varepsilon}(s),\eta _{1}^{\varepsilon } (s)S_{1}(u^{*}(s) )-\eta _{1}^{* } S_{1}(u^{*}(s) ) \right \rangle _{\mathcal{H} ^{0}}\text{d}s\nonumber
\\&\le\mathbb{E}\underset{t\in [0,T]}{\sup}\int_{0}^{t}\left \| u^{\varepsilon}(s)-\overline{u }^{\varepsilon}(s) \right \|_{\mathcal{H} ^{0}}[ \left \|\eta _{1}^{\varepsilon } (s)S_{1}(u^{*}(s) )\right \|_{\mathcal{H} ^{0}}+ \left \|\eta _{1}^{* } S_{1}(u^{*}(s) ) \right \| _{\mathcal{H} ^{0}}]\text{d}s\nonumber
\\&\le 2a_{2}(\mathbb{E}\int_{0}^{T} \left \|u^{*}(s) \right \|_{\mathcal{H} ^{2}}^{2}\text{d}s)^{\frac{1}{2} }(\mathbb{E}\int_{0}^{T} \left \| u^{\varepsilon}(s)-\overline{u }^{\varepsilon}(s) \right \|_{\mathcal{H} ^{0}}^{2}\text{d}s)^{\frac{1}{2} }
\\&\le C_{T}(\left \| u_{0}^{*} \right \|_{\mathcal{H}^{1} }^{2}+\left \| u_{0} ^{\varepsilon } \right \|^{2}_{\mathcal{H} ^{1}}+\left \| u_{0} ^{\varepsilon } \right \|^{6}_{\mathcal{H} ^{1}}+1 )d^{\frac{1}{4} }.\nonumber
	\end{align}
Similarly, for $\Sigma _{1}^{2,3}$,
\begin{align}\label{s7}
	\Sigma _{1}^{2,3}&=\mathbb{E}\underset{t\in [0,T]}{\sup}\int_{0}^{t}\left \langle \overline{u}^{*}(s)-u^{*}(s),\eta _{1}^{\varepsilon } (s)S_{1}(u^{*}(s) )-\eta _{1}^{* } S_{1}(u^{*}(s) ) \right \rangle _{\mathcal{H} ^{0}}\text{d}s
\\&\le C_{T}(\left \| u_{0}^{*} \right \|_{\mathcal{H}^{1} }^{2}+\left \| u_{0} ^{* } \right \|^{6}_{\mathcal{H} ^{1}}+1 )d^{\frac{1}{4} }. \nonumber
	\end{align}
Next, the key problem is to estimate $\Sigma _{1}^{2,2}$:
\begin{align}\label{s8}
	\Sigma _{1}^{2,2}&=\mathbb{E}\underset{t\in [0,T]}{\sup}\int_{0}^{t}\left \langle \overline{u }^{\varepsilon}(s)-\overline{u}^{*}(s),\eta _{1}^{\varepsilon } (s)S_{1}(u^{*}(s) )-\eta _{1}^{* } S_{1}(u^{*}(s) ) \right \rangle _{\mathcal{H} ^{0}}\text{d}s \nonumber
\\&\le\mathbb{E}\underset{t\in [0,T]}{\sup}\int_{0}^{t}\left \langle \overline{u }^{\varepsilon}(s)-\overline{u}^{*}(s),\eta _{1}^{\varepsilon } (s)S_{1}(u^{*}(s) )-\eta _{1}^{\varepsilon }(s) S_{1}(\overline{u }^{*}(s) ) \right \rangle _{\mathcal{H} ^{0}}\text{d}s \nonumber
\\&~~~+\mathbb{E}\underset{t\in [0,T]}{\sup}\int_{0}^{t}\left \langle \overline{u }^{\varepsilon}(s)-\overline{u}^{*}(s),\eta _{1}^{\varepsilon }(s) S_{1}(\overline{u }^{*}(s) )-\eta _{1}^{* } S_{1}(\overline{u }^{*}(s) ) \right \rangle _{\mathcal{H} ^{0}}\text{d}s
\\&~~~+\mathbb{E}\underset{t\in [0,T]}{\sup}\int_{0}^{t}\left \langle \overline{u }^{\varepsilon}(s)-\overline{u}^{*}(s),\eta _{1}^{* } S_{1}(\overline{u }^{*}(s) )-\eta _{1}^{* } S_{1}(u^{*}(s) ) \right \rangle _{\mathcal{H} ^{0}}\text{d}s \nonumber
\\&:=\Sigma _{1}^{2,2,1}+\Sigma _{1}^{2,2,2}+\Sigma _{1}^{2,2,3}, \nonumber
	\end{align}
where
\begin{align}\label{s9}
	\Sigma _{1}^{2,2,1}&=\mathbb{E}\underset{t\in [0,T]}{\sup}\int_{0}^{t}\left \langle \overline{u }^{\varepsilon}(s)-\overline{u}^{*}(s),\eta _{1}^{\varepsilon } (s)S_{1}(u^{*}(s) )-\eta _{1}^{\varepsilon }(s) S_{1}(\overline{u }^{*}(s) ) \right \rangle _{\mathcal{H} ^{0}}\text{d}s\nonumber
\\&\le a_{2}\mathbb{E}\underset{t\in [0,T]}{\sup}\int_{0}^{t}\left \langle \Delta \overline{u }^{\varepsilon}(s)-\Delta \overline{u}^{*}(s),u^{*}(s) -\overline{u }^{*}(s)  \right \rangle _{\mathcal{H} ^{0}}\text{d}s\nonumber
\\&\le a_{2}\mathbb{E}\int_{0}^{T}\left \| \Delta \overline{u }^{\varepsilon}(s)-\Delta \overline{u}^{*}(s)\right\| _{\mathcal{H} ^{0}}\left \|u^{*}(s) -\overline{u }^{*}(s)  \right \| _{\mathcal{H} ^{0}}\text{d}s
\\&\le a_{2}(\mathbb{E}\int_{0}^{T} \left \| \overline{u }^{\varepsilon}(s)\right \|_{\mathcal{H} ^{2}}^{2}+\left \| \overline{u }^{*}(s)\right \|_{\mathcal{H} ^{2}}^{2}\text{d}s)^{\frac{1}{2} }(\mathbb{E}\int_{0}^{T} \left \| u^{*}(s)-\overline{u }^{*}(s) \right \|_{\mathcal{H} ^{0}}^{2}\text{d}s)^{\frac{1}{2} }\nonumber
\\&\le C_{T}(\left \| u_{0}^{\varepsilon} \right \|_{\mathcal{H}^{1} }^{2}+\left \| u_{0} ^{* } \right \|^{2}_{\mathcal{H} ^{1}}+\left \| u_{0} ^{* } \right \|^{6}_{\mathcal{H} ^{1}}+1 )d^{\frac{1}{4} },\nonumber
	\end{align}
and
\begin{align}\label{s10}
	\Sigma _{1}^{2,2,3}&=\mathbb{E}\underset{t\in [0,T]}{\sup}\int_{0}^{t}\left \langle \overline{u }^{\varepsilon}(s)-\overline{u}^{*}(s),\eta _{1}^{* } S_{1}(\overline{u }^{*}(s) )-\eta _{1}^{* } S_{1}(u^{*}(s) ) \right \rangle _{\mathcal{H} ^{0}}\text{d}s\nonumber
\\&\le C_{T}(\left \| u_{0}^{\varepsilon} \right \|_{\mathcal{H}^{1} }^{2}+\left \| u_{0} ^{* } \right \|^{2}_{\mathcal{H} ^{1}}+\left \| u_{0} ^{* } \right \|^{6}_{\mathcal{H} ^{1}}+1 )d^{\frac{1}{4} },
	\end{align}
In the subsequent step, we will employ the time discretization technique to address $\Sigma _{1}^{2,2,2}$. Let $\left [ t \right ] $ denote the integer part of $t$; then, it is important to observe that
\begin{align}\label{s11}
	\Sigma _{1}^{2,2,2}&=\mathbb{E}\underset{t\in [0,T]}{\sup}\int_{0}^{t}\left \langle \overline{u }^{\varepsilon}(s)-\overline{u}^{*}(s),\eta _{1}^{\varepsilon }(s) S_{1}(\overline{u }^{*}(s) )-\eta _{1}^{* } S_{1}(\overline{u }^{*}(s) ) \right \rangle _{\mathcal{H} ^{0}}\text{d}s  \nonumber
\\&=\mathbb{E}\underset{t\in [0,T]}{\sup}\sum_{n=0}^{\left [ t/d  \right ]-1 } \int_{nd}^{(n+1)d}\left \langle u ^{\varepsilon}(nd)-u^{*}(nd),\eta _{1}^{\varepsilon }(s) S_{1}(u ^{*}(nd) )-\eta _{1}^{* } S_{1}(u ^{*}(nd) ) \right \rangle _{\mathcal{H} ^{0}}\text{d}s  \nonumber
\\&~~~+\mathbb{E}\int_{\left [ t/d  \right ]d}^{t}\left \langle u ^{\varepsilon}(\left [ t/d  \right ]d)-u^{*}(\left [ t/d  \right ]d),\eta _{1}^{\varepsilon }(s) S_{1}(u ^{*}(\left [ t/d  \right ]d) )-\eta _{1}^{* } S_{1}(u ^{*}(\left [ t/d  \right ]d) ) \right \rangle _{\mathcal{H} ^{0}}\text{d}s  \nonumber
\\&\le -\mathbb{E}\underset{t\in [0,T]}{\sup}\sum_{n=0}^{\left [ t/d  \right ]-1 } \int_{nd}^{(n+1)d}\left \langle \nabla u ^{\varepsilon}(nd)-\nabla u^{*}(nd),\eta _{1}^{\varepsilon }(s) \nabla u ^{*}(nd) -\eta _{1}^{* } \nabla u ^{*}(nd)  \right \rangle _{\mathcal{H} ^{0}}\text{d}s  \nonumber
\\&~~~+C(\left \| u_{0}^{\varepsilon} \right \|_{\mathcal{H}^{1} }^{2}+\left \| u_{0} ^{* } \right \|^{2}_{\mathcal{H} ^{1}}+1 )d\nonumber
\\&\le \mathbb{E}\underset{t\in [0,T]}{\sup}\sum_{n=0}^{\left [t/d  \right ]-1 } \left \|\int_{nd}^{(n+1)d} \eta _{1}^{\varepsilon }(s) \nabla u ^{*}(nd) -\eta _{1}^{* }\nabla u ^{*}(nd)  \text{d}s\right \|_{\mathcal{H} ^{0}}\left \| u ^{\varepsilon}(nd)-u ^{*}(nd) \right \| _{\mathcal{H} ^{1}}
\\&~~~+C(\left \| u_{0}^{\varepsilon} \right \|_{\mathcal{H}^{1} }^{2}+\left \| u_{0} ^{* } \right \|^{2}_{\mathcal{H} ^{1}}+1 )d \nonumber
\\&\le\sum_{n=0}^{\left [ T/d  \right ]-1 } (\left \|\mathbb{E}\int_{nd}^{(n+1)d} \eta _{1}^{\varepsilon }(s) \nabla u ^{*}(nd) -\eta _{1}^{* }\nabla u ^{*}(nd)  \text{d}s\right \|^{2}_{\mathcal{H} ^{0}})^{\frac{1}{2} } (\mathbb{E} \left \|u ^{\varepsilon}(nd)-u ^{*}(nd) \right \| ^{2}_{\mathcal{H} ^{1}} )^{\frac{1}{2} } \nonumber
\\&~~~+C(\left \| u_{0}^{\varepsilon} \right \|_{\mathcal{H}^{1} }^{2}+\left \| u_{0} ^{* } \right \|^{2}_{\mathcal{H} ^{1}}+1 )d \nonumber
\\&\le \frac{T}{d} \max_{0\le n\le \left [ T/d \right ]-1, n\in \mathbb{N}^{+} } (\left \|\mathbb{E}\int_{nd}^{(n+1)d} \eta _{1}^{\varepsilon }(s)\nabla u ^{*}(nd) -\eta _{1}^{* }\nabla u ^{*}(nd) \text{d}s\right \|^{2}_{\mathcal{H} ^{0}})^{\frac{1}{2} }\cdot C(\left \| u_{0}^{\varepsilon} \right \|_{\mathcal{H}^{1} }^{2}+\left \| u_{0} ^{* } \right \|^{2}_{\mathcal{H} ^{1}}+1 ) \nonumber
 \\&~~~+C(\left \| u_{0}^{\varepsilon} \right \|_{\mathcal{H}^{1} }^{2}+\left \| u_{0} ^{* } \right \|^{2}_{\mathcal{H} ^{1}}+1 )d, \nonumber
	\end{align}
where
\begin{align}\label{s12}
	&(\left \|\mathbb{E}\int_{nd}^{(n+1)d} \eta _{1}^{\varepsilon }(s)\nabla u ^{*}(nd) -\eta _{1}^{* }\nabla u ^{*}(nd) \text{d}s\right \|^{2}_{\mathcal{H} ^{0}} )^{\frac{1}{2} }\nonumber
\\&=(\mathbb{E}\left \| u ^{*}(nd)  \right \|^{2}_{\mathcal{H} ^{1}}\left |\int_{nd}^{(n+1)d} \eta _{1}^{\varepsilon }(s) -\eta _{1}^{* }  \text{d}s\right |^{2} )^{\frac{1}{2} }\nonumber
\\&\le C_{T}(\left \| u ^{*}_{0}  \right \|^{2}_{\mathcal{H} ^{1}}+1)(\left |\int_{nd}^{(n+1)d} \eta _{0}(\frac{s }{\varepsilon } ) -\eta _{1}^{* }  \text{d}s\right |^{2} )^{\frac{1}{2} }
\\&\le C_{T}(\left \| u ^{*}_{0}  \right \|^{2}_{\mathcal{H} ^{1}}+1)\varepsilon \left |\int_{\frac{nd}{\varepsilon }}^{\frac{(n+1)d}{\varepsilon }} \eta _{1}(r) -\eta _{1}^{* }\text{d}r\right | \nonumber
\\&\le C_{T}\mathcal{R} _{1}(\frac{d}{\varepsilon } )(\left \| u ^{*}_{0}  \right \|^{2}_{\mathcal{H} ^{1}}+1)d.\nonumber
	\end{align}
Substituting \eqref{s2}-\eqref{s12} into \eqref{s1} implies
\begin{align}\label{a19}
	\Sigma _{1}&=\mathbb{E}\underset{t\in [0,T\wedge \tau _{R}]}{\sup}\int_{0}^{t}\left \langle u^{\varepsilon}(s)-u^{*}(s),\eta _{1}^{\varepsilon } (s)S_{1}(u^{\varepsilon}(s) )-\eta _{1}^{* } S_{1}(u^{*}(s) ) \right \rangle _{\mathcal{H} ^{0}}\text{d}s\nonumber
\\&\le-a_{1}\mathbb{E}\underset{t\in [0,T\wedge \tau _{R}]}{\sup}\int_{0}^{t}\left \| \nabla u^{\varepsilon}(s)-\nabla u^{*}(s) \right \| ^{2}_{\mathcal{H} ^{0}}\text{d}s
\\&~~~+C_{T}(\left \| u_{0}^{\varepsilon} \right \|_{\mathcal{H}^{1} }^{2}+\left \| u_{0} ^{* } \right \|^{2}_{\mathcal{H} ^{1}}+\left \| u_{0}^{\varepsilon} \right \|_{\mathcal{H}^{1} }^{6}+\left \| u_{0} ^{* } \right \|^{6}_{\mathcal{H} ^{1}}+1 )[d^{\frac{1}{4} }+d+\mathcal{R} _{1}(\frac{d}{\varepsilon } )]. \nonumber
	\end{align}
We shall now estimate $\Sigma _{2}$:
\begin{align}\label{s13}
	\Sigma _{2}&=-\mathbb{E}\underset{t\in [0,T\wedge \tau _{R}]}{\sup}\int_{0}^{t}\left \langle u^{\varepsilon}(s)-u^{*}(s),\eta _{2}^{\varepsilon } (s)S_{2}(u^{\varepsilon}(s) )-\eta _{2}^{* } S_{2}(u^{*}(s) ) \right \rangle _{\mathcal{H} ^{0}}\text{d}s\nonumber
\\&= -\mathbb{E}\underset{t\in [0,T\wedge \tau _{R}]}{\sup}\int_{0}^{t}\left \langle u^{\varepsilon}(s)-u^{*}(s),\eta _{2}^{\varepsilon } (s)S_{2}(u^{\varepsilon}(s) )-\eta _{2}^{\varepsilon}(s) S_{2}(u^{*}(s) ) \right \rangle _{\mathcal{H} ^{0}}\text{d}s
\\&~~~-\mathbb{E}\underset{t\in [0,T\wedge \tau _{R}]}{\sup}\int_{0}^{t}\left \langle u^{\varepsilon}(s)-u^{*}(s),\eta _{2}^{\varepsilon } (s)S_{2}(u^{*}(s) )-\eta _{2}^{* } S_{2}(u^{*}(s) ) \right \rangle _{\mathcal{H} ^{0}}\text{d}s\nonumber
\\&:=\Sigma _{2}^{1}+\Sigma _{2}^{2}. \nonumber
	\end{align}
For $\Sigma _{2}^{1}$, by  Young’s inequality, H\"older's inequality and \eqref{k1}, we have
\begin{align}\label{s15}
	\Sigma _{2}^{1}&=-\mathbb{E}\underset{t\in [0,T\wedge \tau _{R}]}{\sup}\int_{0}^{t}\left \langle u^{\varepsilon}(s)-u^{*}(s),\eta _{2}^{\varepsilon } (s)S_{2}(u^{\varepsilon}(s) )-\eta _{2}^{\varepsilon}(s) S_{2}(u^{*}(s) ) \right \rangle _{\mathcal{H} ^{0}}\text{d}s\nonumber
\\&=-\mathbb{E}\underset{t\in [0,T\wedge \tau _{R}]}{\sup}\int_{0}^{t}\left \langle u^{\varepsilon}(s)-u^{*}(s),\eta _{2}^{\varepsilon } (s)(u^{\varepsilon }(s),\nabla)u^{\varepsilon }(s)-\eta _{2}^{\varepsilon}(s) (u^{* }(s),\nabla)u^{* }(s) \right \rangle _{\mathcal{H} ^{0}}\text{d}s\nonumber
\\&~~~-\mathbb{E}\underset{t\in [0,T\wedge \tau _{R}]}{\sup}\int_{0}^{t}\left \langle u^{\varepsilon}(s)-u^{*}(s),\eta _{2}^{\varepsilon } (s)\Psi _{N}(\left | u^{\varepsilon }(s) \right |^{2} )u^{\varepsilon }(s)-\eta _{2}^{\varepsilon } (s)\Psi _{N}(\left | u^{* }(s) \right |^{2} )u^{* }(s) \right \rangle _{\mathcal{H} ^{0}}\text{d}s\nonumber
\\&\le \frac{a_{1}}{4}   \mathbb{E}\int_{0}^{T\wedge \tau _{R}}\left \| \nabla u^{\varepsilon}(s)-\nabla u^{*}(s) \right \| ^{2}_{\mathcal{H} ^{0}}\text{d}s+\frac{a_{4}}{a_{1}} \mathbb{E}\int_{0}^{T\wedge \tau _{R}}\left \| (u^{\varepsilon}(s))^{T} u^{\varepsilon}(s)-(u^{*}(s))^{T} u^{*}(s) \right \| ^{2}_{\mathcal{H} ^{0}}\text{d}s\nonumber
\\&~~~+4a_{4}\mathbb{E}\int_{0}^{T\wedge \tau _{R}}\left \| \left | u^{\varepsilon}(s)-u^{*}(s) \right | (\left | u^{\varepsilon}(s) \right |+\left | u^{*}(s) \right |) \right \| ^{2}_{\mathcal{H} ^{0}}\text{d}s
\\&\le \frac{a_{1}}{4}  \mathbb{E}\int_{0}^{T\wedge \tau _{R}}\left \| \nabla u^{\varepsilon}(s)-\nabla u^{*}(s) \right \| ^{2}_{\mathcal{H} ^{0}}\text{d}s\nonumber
\\&~~~+C_{a_{1},a_{4} }\mathbb{E}\int_{0}^{T\wedge \tau _{R}}\left \| u^{\varepsilon}(s)-u^{*}(s) \right \| ^{2}_{L^{4}}[\left \| u^{\varepsilon}(s)\right \| ^{2}_{L ^{4}}+\left \|u^{*}(s) \right \| ^{2}_{L ^{4}}\text{d}s\nonumber
\\&\le \frac{a_{1} }{4}  \mathbb{E}\int_{0}^{T\wedge \tau _{R}}\left \| \nabla u^{\varepsilon}(s)-\nabla u^{*}(s) \right \| ^{2}_{\mathcal{H} ^{0}}\text{d}s\nonumber
\\&~~~+C_{a_{1},a_{4} }\mathbb{E}\int_{0}^{T\wedge \tau _{R}}\left \| u^{\varepsilon}(s)-u^{*}(s) \right \| ^{\frac{1}{2} }_{\mathcal{H} ^{0}}\left \| u^{\varepsilon}(s)-u^{*}(s) \right \| ^{\frac{3}{2} }_{\mathcal{H} ^{1}}[\left \| u^{\varepsilon}(s)\right \| ^{2}_{\mathcal{H} ^{1}}+\left \|u^{*}(s) \right \| ^{2}_{\mathcal{H} ^{1}}]\text{d}s\nonumber
\\&\le \frac{a_{1} }{2}    \mathbb{E}\int_{0}^{T\wedge \tau _{R}}\left \| \nabla u^{\varepsilon}(s)-\nabla u^{*}(s) \right \| ^{2}_{\mathcal{H} ^{0}}\text{d}s
+C_{a_{1},a_{4},R }\mathbb{E}\int_{0}^{T\wedge \tau _{R}}\left \| u^{\varepsilon}(s)-u^{*}(s) \right \| ^{2 }_{\mathcal{H} ^{0}}\text{d}s.\nonumber
\end{align}
For $\Sigma _{2}^{2}$,
\begin{align}\label{s16}
	\Sigma _{2}^{2}&=-\mathbb{E}\underset{t\in [0,T\wedge \tau _{R}]}{\sup}\int_{0}^{t}\left \langle u^{\varepsilon}(s)-u^{*}(s),\eta _{2}^{\varepsilon } (s)S_{2}(u^{*}(s) )-\eta _{2}^{* } S_{2}(u^{*}(s) ) \right \rangle _{\mathcal{H} ^{0}}\text{d}s,\nonumber
\\&\le -\mathbb{E}\underset{t\in [0,T\wedge \tau _{R}]}{\sup}\int_{0}^{t}\left \langle u^{\varepsilon}(s)-\overline{u }^{\varepsilon}(s),\eta _{2}^{\varepsilon } (s)S_{2}(u^{*}(s) )-\eta _{2}^{* } S_{2}(u^{*}(s) ) \right \rangle _{\mathcal{H} ^{0}}\text{d}s\nonumber
\\&~~~-\mathbb{E}\underset{t\in [0,T\wedge \tau _{R}]}{\sup}\int_{0}^{t}\left \langle \overline{u }^{\varepsilon}(s)-\overline{u}^{*}(s),\eta _{2}^{\varepsilon } (s)S_{2}(u^{*}(s) )-\eta _{2}^{* } S_{2}(u^{*}(s) ) \right \rangle _{\mathcal{H} ^{0}}\text{d}s
\\&~~~-\mathbb{E}\underset{t\in [0,T\wedge \tau _{R}]}{\sup}\int_{0}^{t}\left \langle \overline{u}^{*}(s)-u^{*}(s),\eta _{2}^{\varepsilon } (s)S_{2}(u^{*}(s) )-\eta _{2}^{* } S_{2}(u^{*}(s) ) \right \rangle _{\mathcal{H} ^{0}}\text{d}s\nonumber
\\&:=\Sigma _{2}^{2,1}+\Sigma _{2}^{2,2}+\Sigma _{2}^{2,3}.\nonumber
	\end{align}
Applying H\"older's  inequality, $\Sigma _{2}^{2,1}$ is controlled by
\begin{align*}
	\Sigma _{2}^{2,1}&=-\mathbb{E}\underset{t\in [0,T\wedge \tau _{R}]}{\sup}\int_{0}^{t}\left \langle u^{\varepsilon}(s)-\overline{u }^{\varepsilon}(s),\eta _{2}^{\varepsilon } (s)S_{2}(u^{*}(s) )-\eta _{2}^{* } S_{2}(u^{*}(s) ) \right \rangle _{\mathcal{H} ^{0}}\text{d}s
\\&\le\mathbb{E}\int_{0}^{T\wedge \tau _{R}}\left \| u^{\varepsilon}(s)-\overline{u }^{\varepsilon}(s) \right \|_{\mathcal{H} ^{0}}\left \| \eta _{2}^{\varepsilon } (s)S_{2}(u^{*}(s) )-\eta _{2}^{* } S_{2}(u^{*}(s) ) \right \|_{\mathcal{H} ^{0}}\text{d}s
\\&\le 2a_{4}(\mathbb{E}\int_{0}^{T\wedge \tau _{R}} \left \| S_{2}(u^{*}(s) )\right \|_{\mathcal{H} ^{0}}^{2}\text{d}s)^{\frac{1}{2} }(\mathbb{E}\int_{0}^{T\wedge \tau _{R}} \left \| u^{\varepsilon}(s)-\overline{u }^{\varepsilon}(s) \right \|_{\mathcal{H} ^{0}}^{2}\text{d}s)^{\frac{1}{2} },
	\end{align*}
where
\begin{align}\label{s23}
	 \left \| S_{2}(u^{*}(s) )\right \|_{\mathcal{H} ^{0}}^{2}&\le\left \| (u^{* }(s),\nabla)u^{* }(s)\right \|_{\mathcal{H} ^{0}}^{2}+\left \| \Psi _{N}(\left | u^{* }(s)\right |^{2})u^{* }(s)\right \|_{\mathcal{H} ^{0}}^{2}\nonumber
\\&\le \int _{\mathbb{D} }\sum_{j=1}^{3} (u_{j})^{2}\sum_{i=1}^{3}\sum_{k=1}^{3}(\partial_{k}u_{i} )^{2}\text{d}x+\left \|u^{* }(s)  \right \|^{6}_{L^{6}}\nonumber
\\&\le \sup_{x\in \mathbb{D} } \sum_{j=1}^{3} (u_{j})^{2}\left \|u^{* }(s)  \right \|^{2}_{\mathcal{H} ^{1}}+C\left \|u^{* }(s)  \right \|^{6}_{\mathcal{H} ^{1}}
\\&\le C\left \|u^{* }(s)  \right \|_{\mathcal{H} ^{2}}\left \|u^{* }(s)  \right \|^{3}_{\mathcal{H} ^{1}}+C\left \|u^{* }(s)  \right \|^{6}_{\mathcal{H} ^{1}}.\nonumber
	\end{align}
Consequently,
\begin{align}\label{s17}
	\Sigma _{2}^{2,1}\le C_{T}(\left \| u_{0}^{\varepsilon} \right \|_{\mathcal{H}^{1} }^{2}+\left \| u_{0} ^{* } \right \|^{2}_{\mathcal{H} ^{1}}+\left \| u_{0} ^{* } \right \|^{6}_{\mathcal{H} ^{1}}+1 )d^{\frac{1}{4} }.
	\end{align}
Similarly, for $\Sigma _{2}^{2,3}$,
\begin{align}\label{s18}
	\Sigma _{1}^{2,3}&=-\mathbb{E}\underset{t\in [0,T\wedge \tau _{R}]}{\sup}\int_{0}^{t}\left \langle \overline{u}^{*}(s)-u^{*}(s),\eta _{2}^{\varepsilon } (s)S_{2}(u^{*}(s) )-\eta _{2}^{* } S_{2}(u^{*}(s) ) \right \rangle _{\mathcal{H} ^{0}}\text{d}s
\\&\le C_{T}(\left \| u_{0} ^{* } \right \|^{2}_{\mathcal{H} ^{0}}+\left \| u_{0} ^{* } \right \|^{2}_{\mathcal{H} ^{1}}+\left \| u_{0} ^{* } \right \|^{6}_{\mathcal{H} ^{1}}+1 )d^{\frac{1}{4} }. \nonumber
	\end{align}
Next, for $\Sigma _{2}^{2,2}$, we obtain
\begin{align}\label{s19}
	\Sigma _{2}^{2,2}&=-\mathbb{E}\underset{t\in [0,T\wedge \tau _{R}]}{\sup}\int_{0}^{t}\left \langle \overline{u }^{\varepsilon}(s)-\overline{u}^{*}(s),\eta _{2}^{\varepsilon } (s)S_{2}(u^{*}(s) )-\eta _{2}^{* } S_{2}(u^{*}(s) ) \right \rangle _{\mathcal{H} ^{0}}\text{d}s \nonumber
\\&\le-\mathbb{E}\underset{t\in [0,T\wedge \tau _{R}]}{\sup}\int_{0}^{t}\left \langle \overline{u }^{\varepsilon}(s)-\overline{u}^{*}(s),\eta _{2}^{\varepsilon } (s)S_{2}(u^{*}(s) )-\eta _{2}^{\varepsilon }(s) S_{2}(\overline{u }^{*}(s) ) \right \rangle _{\mathcal{H} ^{0}}\text{d}s \nonumber
\\&~~~-\mathbb{E}\underset{t\in [0,T\wedge \tau _{R}]}{\sup}\int_{0}^{t}\left \langle \overline{u }^{\varepsilon}(s)-\overline{u}^{*}(s),\eta _{2}^{\varepsilon }(s) S_{2}(\overline{u }^{*}(s) )-\eta _{2}^{* } S_{2}(\overline{u }^{*}(s) ) \right \rangle _{\mathcal{H} ^{0}}\text{d}s
\\&~~~-\mathbb{E}\underset{t\in [0,T\wedge \tau _{R}]}{\sup}\int_{0}^{t}\left \langle \overline{u }^{\varepsilon}(s)-\overline{u}^{*}(s),\eta _{2}^{* } S_{2}(\overline{u }^{*}(s) )-\eta _{2}^{* } S_{2}(u^{*}(s) ) \right \rangle _{\mathcal{H} ^{0}}\text{d}s \nonumber
\\&:=\Sigma _{2}^{2,2,1}+\Sigma _{2}^{2,2,2}+\Sigma _{2}^{2,2,3}. \nonumber
	\end{align}
Using H\"older's  inequality and \eqref{k1}, $\Sigma _{2}^{2,2,1}$ is estimated by
\begin{align}\label{s20}
	&\Sigma _{2}^{2,2,1}\nonumber
\\&=-\mathbb{E}\underset{t\in [0,T\wedge \tau _{R}]}{\sup}\int_{0}^{t}\left \langle \overline{u }^{\varepsilon}(s)-\overline{u}^{*}(s),\eta _{2}^{\varepsilon } (s)S_{2}(u^{*}(s) )-\eta _{2}^{\varepsilon }(s) S_{2}(\overline{u }^{*}(s) ) \right \rangle _{\mathcal{H} ^{0}}\text{d}s\nonumber
\\&=-\mathbb{E}\underset{t\in [0,T\wedge \tau _{R}]}{\sup}\int_{0}^{t}\left \langle \overline{u }^{\varepsilon}(s)-\overline{u}^{*}(s),\eta _{2}^{\varepsilon } (s)(u^{* }(s),\nabla)u^{* }(s)-\eta _{2}^{\varepsilon}(s) (\overline{u}^{*}(s),\nabla)\overline{u}^{*}(s) \right \rangle _{\mathcal{H} ^{0}}\text{d}s\nonumber
\\&~~~-\mathbb{E}\underset{t\in [0,T\wedge \tau _{R}]}{\sup}\int_{0}^{t}\left \langle \overline{u }^{\varepsilon}(s)-\overline{u}^{*}(s),\eta _{2}^{\varepsilon } (s)\Psi _{N}(\left | u^{* }(s) \right |^{2} )u^{* }(s)-\eta _{2}^{\varepsilon } (s)\Psi _{N}(\left | \overline{u}^{*}(s) \right |^{2} )\overline{u}^{*}(s) \right \rangle _{\mathcal{H} ^{0}}\text{d}s\nonumber
\\&\le \mathbb{E}\underset{t\in [0,T\wedge \tau _{R}]}{\sup}\int_{0}^{t}\left \langle \nabla\overline{u }^{\varepsilon}(s)-\nabla\overline{u}^{*}(s),\eta _{2}^{\varepsilon } (s)(u^{* }(s))^{T}u^{* }(s)-\eta _{2}^{\varepsilon}(s) (\overline{u}^{*}(s))^{T}\overline{u}^{*}(s) \right \rangle _{\mathcal{H} ^{0}}\text{d}s\nonumber
\\&~~~+a_{4}\mathbb{E}\int_{0}^{T\wedge \tau _{R}}\left \| \overline{u }^{\varepsilon}(s)-\overline{u}^{*}(s)\right \| _{\mathcal{H} ^{0}}\left \|\Psi _{N}(\left | u^{* }(s) \right |^{2} )u^{* }(s)-\Psi _{N}(\left | \overline{u}^{*}(s) \right |^{2} )\overline{u}^{*}(s) \right \| _{\mathcal{H} ^{0}}\text{d}s
\\&\le C (\mathbb{E}\int_{0}^{T\wedge \tau _{R}}\left \| \overline{u }^{\varepsilon}(s)\right \| ^{2}_{\mathcal{H} ^{1}}+\left \| \overline{u}^{*}(s)\right \| ^{2}_{\mathcal{H} ^{1}}\text{d}s)^{\frac{1}{2} }(\mathbb{E}\int_{0}^{T\wedge \tau _{R}}\left \| (u^{*}(s))^{T} u^{*}(s)-(\overline{u}^{*}(s))^{T} \overline{u}^{*}(s) \right \| ^{2}_{\mathcal{H} ^{0}}\text{d}s)^{\frac{1}{2}}\nonumber
\\&~~~+C(\mathbb{E}\int_{0}^{T\wedge \tau _{R}}\left \| \overline{u }^{\varepsilon}(s)\right \| ^{2}_{\mathcal{H} ^{0}}+\left \| \overline{u}^{*}(s)\right \| ^{2}_{\mathcal{H} ^{0}}\text{d}s)^{\frac{1}{2} }(\mathbb{E}\int_{0}^{T\wedge \tau _{R}}\left \| u^{*}(s)- \overline{u}^{*}(s) \right \| ^{2}_{L ^{6}}[\left \| u^{*}(s) \right \| ^{4}_{L ^{6}}+\left \| \overline{u}^{*}(s) \right \| ^{4}_{L ^{6}}]\text{d}s)^{\frac{1}{2}}\nonumber
\\&\le C(\left \| u_{0} ^{\varepsilon } \right \|^{2}_{\mathcal{H} ^{0}}+\left \| u_{0} ^{* } \right \|^{2}_{\mathcal{H} ^{0}}+1 )[(\mathbb{E}\int_{0}^{T\wedge \tau _{R}}\left \|  u^{*}(s)- \overline{u}^{*}(s) \right \| ^{\frac{3}{2} }_{\mathcal{H} ^{1}}\left \| u^{*}(s)- \overline{u}^{*}(s) \right \| ^{\frac{1}{2} }_{\mathcal{H} ^{0}}\text{d}s)^{\frac{1}{2}}\nonumber
\\&~~~+(\mathbb{E}\int_{0}^{T\wedge \tau _{R}}\left \|  u^{*}(s)- \overline{u}^{*}(s) \right \| _{\mathcal{H} ^{2}}\left \| u^{*}(s)- \overline{u}^{*}(s) \right \| _{\mathcal{H} ^{0}}[\left \| u^{*}(s) \right \| ^{4}_{\mathcal{H} ^{1}}+\left \| \overline{u}^{*}(s) \right \| ^{4}_{\mathcal{H} ^{1}}]\text{d}s)^{\frac{1}{2}}],\nonumber
	\end{align}
where
\begin{align*}
&\mathbb{E}\int_{0}^{T\wedge \tau _{R}}\left \|  u^{*}(s)- \overline{u}^{*}(s) \right \| ^{\frac{3}{2} }_{\mathcal{H} ^{1}}\left \| u^{*}(s)- \overline{u}^{*}(s) \right \| ^{\frac{1}{2} }_{\mathcal{H} ^{0}}\text{d}s
\\&\le (\mathbb{E}\int_{0}^{T\wedge \tau _{R}}\left \|  u^{*}(s)- \overline{u}^{*}(s) \right \| ^{2 }_{\mathcal{H} ^{1}}\text{d}s)^{\frac{3}{4} }(\mathbb{E}\int_{0}^{T\wedge \tau _{R}}\left \| u^{*}(s)- \overline{u}^{*}(s) \right \| ^{2}_{\mathcal{H} ^{0}}\text{d}s)^{\frac{1}{4} }
\\&\le C_{T}(\left \| u_{0} ^{* } \right \|^{2}_{\mathcal{H} ^{0}}+1 )(\left \| u^{* }_{0}\right \|_{\mathcal{H} ^{0}}^{2}+\left \| u^{* }_{0}\right \|_{\mathcal{H} ^{1}}^{2}+1)d^{\frac{1}{8} },
	\end{align*}
and
\begin{align*}
&\mathbb{E}\int_{0}^{T\wedge \tau _{R}}\left \|  u^{*}(s)- \overline{u}^{*}(s) \right \| _{\mathcal{H} ^{2}}\left \| u^{*}(s)- \overline{u}^{*}(s) \right \| _{\mathcal{H} ^{0}}\text{d}s
\\&\le (\mathbb{E}\int_{0}^{T\wedge \tau _{R}}\left \|  u^{*}(s)- \overline{u}^{*}(s) \right \| ^{2 }_{\mathcal{H} ^{2}}\text{d}s)^{\frac{1}{2} }(\mathbb{E}\int_{0}^{T\wedge \tau _{R}}\left \| u^{*}(s)- \overline{u}^{*}(s) \right \| ^{2}_{\mathcal{H} ^{0}}\text{d}s)^{\frac{1}{2} }
\\&\le C_{T}(\left \| u_{0} ^{* } \right \|^{2}_{\mathcal{H} ^{0}}+1 )(\left \| u^{* }_{0}\right \|_{\mathcal{H} ^{0}}^{2}+\left \| u^{* }_{0}\right \|_{\mathcal{H} ^{1}}^{2}+1)d^{\frac{1}{4} }.
	\end{align*}
Consequently,
\begin{align}\label{s21}
	\Sigma _{2}^{2,2,1}\le C(\left \| u_{0} ^{\varepsilon } \right \|^{2}_{\mathcal{H} ^{0}}+\left \| u_{0} ^{* } \right \|^{2}_{\mathcal{H} ^{1}}+1 )(d^{\frac{1}{16} }+d^{\frac{1}{4} }).
	\end{align}
Similarly, for $\Sigma _{2}^{2,2,3}$,
\begin{align}\label{s35}
	\Sigma _{2}^{2,2,3}\le C(\left \| u_{0} ^{\varepsilon } \right \|^{2}_{\mathcal{H} ^{0}}+\left \| u_{0} ^{* } \right \|^{2}_{\mathcal{H} ^{1}}+1 )(d^{\frac{1}{16} }+d^{\frac{1}{4} }) .
	\end{align}
By employing the time discretization technique to address $\Sigma _{2}^{2,2,2}$, and applying \eqref{s23}, we have
\begin{align*}
	&\Sigma _{2}^{2,2,2}\nonumber
\\&=-\mathbb{E}\underset{t\in [0,T\wedge \tau _{R}]}{\sup}\int_{0}^{t}\left \langle \overline{u }^{\varepsilon}(s)-\overline{u}^{*}(s),\eta _{2}^{\varepsilon }(s) S_{2}(\overline{u }^{*}(s) )-\eta _{2}^{* } S_{2}(\overline{u }^{*}(s) ) \right \rangle _{\mathcal{H} ^{0}}\text{d}s  \nonumber
\\&=-\mathbb{E}\underset{t\in [0,T\wedge \tau _{R}]}{\sup}\sum_{n=0}^{\left [ t/d  \right ]-1 } \int_{nd}^{(n+1)d}\left \langle u ^{\varepsilon}(nd)-u^{*}(nd),\eta _{2}^{\varepsilon }(s) S_{2}(u ^{*}(nd) )-\eta _{2}^{* } S_{2}(u ^{*}(nd) ) \right \rangle _{\mathcal{H} ^{0}}\text{d}s  \nonumber
\\&~~~-\mathbb{E}\int_{\left [ t/d  \right ]d}^{t}\left \langle u ^{\varepsilon}(\left [ t/d  \right ]d)-u^{*}(\left [ t/d  \right ]d),\eta _{2}^{\varepsilon }(s) S_{2}(u ^{*}(\left [ t/d  \right ]d) )-\eta _{2}^{* } S_{2}(u ^{*}(\left [ t/d  \right ]d) ) \right \rangle _{\mathcal{H} ^{0}}\text{d}s  \nonumber
\\&=\mathbb{E}\underset{t\in [0,T\wedge \tau _{R}]}{\sup}\sum_{n=0}^{\left [ t/d  \right ]-1 } \int_{nd}^{(n+1)d}\left \langle \nabla u ^{\varepsilon}(nd)-\nabla u^{*}(nd),\eta _{2}^{\varepsilon } (s)S^{1}_{2}(u ^{*}(nd) )-\eta _{2}^{*} S^{1}_{2}(u ^{*}(nd) ) \right \rangle _{\mathcal{H} ^{0}}\text{d}s  \nonumber
\\&~~~-\mathbb{E}\underset{t\in [0,T\wedge \tau _{R}]}{\sup}\sum_{n=0}^{\left [ t/d  \right ]-1 } \int_{nd}^{(n+1)d}\left \langle u ^{\varepsilon}(nd)-u^{*}(nd),\eta _{2}^{\varepsilon } (s)S^{2}_{2}(u ^{*}(nd)-\eta _{2}^{*}S^{2}_{2}(u ^{*}(nd) ) \right \rangle _{\mathcal{H} ^{0}}\text{d}s  \nonumber
\\&~~~+\mathbb{E}\int_{\left [ t/d  \right ]d}^{t}\left \langle \nabla u ^{\varepsilon}(\left [ t/d  \right ]d)-\nabla u^{*}(\left [ t/d  \right ]d),\eta _{2}^{\varepsilon } (s)S^{1}_{2}(u ^{*}(\left [ t/d  \right ]d) )-\eta _{2}^{*} S^{1}_{2}(u ^{*}(\left [ t/d  \right ]d) ) \right \rangle _{\mathcal{H} ^{0}}\text{d}s\nonumber
\\&~~~-\mathbb{E}\int_{\left [ t/d  \right ]d}^{t}\left \langle u ^{\varepsilon}(\left [ t/d  \right ]d)-u^{*}(\left [ t/d  \right ]d),\eta _{2}^{\varepsilon } (s)S^{2}_{2}(u ^{*}(\left [ t/d  \right ]d)-\eta _{2}^{*}S^{2}_{2}(u ^{*}(\left [ t/d  \right ]d) )\right \rangle _{\mathcal{H} ^{0}}\text{d}s\nonumber
\\&=\mathbb{E}\underset{t\in [0,T\wedge \tau _{R}]}{\sup}\sum_{n=0}^{\left [ t/d  \right ]-1 }  \left \| \int_{nd}^{(n+1)d} \eta _{2}^{\varepsilon } (s)S^{1}_{2}(u ^{*}(nd) )-\eta _{2}^{*} S^{1}_{2}(u ^{*}(nd) ) \text{d}s \right \|_{\mathcal{H}^{0} }\left \|  u ^{\varepsilon}(nd)- u^{*}(nd) \right \|_{\mathcal{H}^{1} } \nonumber
\\&~~~+\mathbb{E}\underset{t\in [0,T\wedge \tau _{R}]}{\sup}\sum_{n=0}^{\left [ t/d  \right ]-1 } \left \|\int_{nd}^{(n+1)d}   \eta _{2}^{\varepsilon } (s)S^{2}_{2}(u ^{*}(nd) )-\eta _{2}^{*} S^{2}_{2}(u ^{*}(nd) ) \text{d}s  \right \|_{\mathcal{H}^{0} }\left \|  u ^{\varepsilon}(nd)- u^{*}(nd) \right \|_{\mathcal{H}^{0} }
\\&~~~+\mathbb{E}\int_{\left [ t/d  \right ]d}^{t}\left \|  u ^{\varepsilon}(\left [ t/d  \right ]d)- u^{*}(\left [ t/d  \right ]d) \right \|_{\mathcal{H}^{1} } \left \|  \eta _{2}^{\varepsilon } (s)S^{1}_{2}(u ^{*}(\left [ t/d  \right ]d) )-\eta _{2}^{*} S^{1}_{2}(u ^{*}(\left [ t/d  \right ]d) ) \right \|_{\mathcal{H}^{0} }\text{d}s\nonumber
\\&~~~+\mathbb{E}\int_{\left [ t/d  \right ]d}^{t}\left \|  u ^{\varepsilon}(\left [ t/d  \right ]d)- u^{*}(\left [ t/d  \right ]d) \right \|_{\mathcal{H}^{0} } \left \|  \eta _{2}^{\varepsilon } (s)S^{2}_{2}(u ^{*}(\left [ t/d  \right ]d) )-\eta _{2}^{*} S^{2}_{2}(u ^{*}(\left [ t/d  \right ]d) ) \right \|_{\mathcal{H}^{0} }\text{d}s\nonumber
\\&\le \sum_{n=0}^{\left [ T/d  \right ]-1 } (\left \|\mathbb{E}\int_{nd}^{(n+1)d} \eta _{2}^{\varepsilon } (s)S^{1}_{2}(u ^{*}(nd) )-\eta _{2}^{*} S^{1}_{2}(u ^{*}(nd) )  \text{d}s\right \|^{2}_{\mathcal{H} ^{0}})^{\frac{1}{2} } (\mathbb{E} \left \|u ^{\varepsilon}(nd)-u ^{*}(nd) \right \| ^{2}_{\mathcal{H} ^{1}} )^{\frac{1}{2} } \nonumber
\\&~~~+\sum_{n=0}^{\left [ T/d  \right ]-1 } (\left \|\mathbb{E}\int_{nd}^{(n+1)d} \eta _{2}^{\varepsilon } (s)S^{2}_{2}(u ^{*}(nd) )-\eta _{2}^{*} S^{2}_{2}(u ^{*}(nd) )  \text{d}s\right \|^{2}_{\mathcal{H} ^{0}})^{\frac{1}{2} } (\mathbb{E} \left \|u ^{\varepsilon}(nd)-u ^{*}(nd) \right \| ^{2}_{\mathcal{H} ^{0}} )^{\frac{1}{2} } \nonumber
\\&~~~+C(\left \| u_{0}^{\varepsilon} \right \|_{\mathcal{H}^{0} }^{2}+\left \| u_{0}^{\varepsilon} \right \|_{\mathcal{H}^{1} }^{2}+\left \| u_{0} ^{* } \right \|^{2}_{\mathcal{H} ^{1}}+\left \| u_{0} ^{* } \right \|^{2}_{\mathcal{H} ^{0}}+\left \| u_{0} ^{* } \right \|^{6}_{\mathcal{H} ^{1}}+1 )d\nonumber
\\&\le \frac{T}{d} \max_{0\le n\le \left [ T/d \right ]-1, n\in \mathbb{N}^{+} } (\left \|\mathbb{E}\int_{nd}^{(n+1)d} \eta _{2}^{\varepsilon } (s)S^{1}_{2}(u ^{*}(nd) )-\eta _{2}^{*} S^{1}_{2}(u ^{*}(nd) )  \text{d}s\right \|^{2}_{\mathcal{H} ^{0}})^{\frac{1}{2} } C(\left \| u_{0}^{\varepsilon} \right \|_{\mathcal{H}^{1} }^{2}+\left \| u_{0} ^{* } \right \|^{2}_{\mathcal{H} ^{1}}+1 ) \nonumber
\\&~~~+\frac{T}{d} \max_{0\le n\le \left [ T/d \right ]-1, n\in \mathbb{N}^{+} } (\left \|\mathbb{E}\int_{nd}^{(n+1)d} \eta _{2}^{\varepsilon } (s)S^{2}_{2}(u ^{*}(nd) )-\eta _{2}^{*} S^{2}_{2}(u ^{*}(nd) )  \text{d}s\right \|^{2}_{\mathcal{H} ^{0}})^{\frac{1}{2} } C(\left \| u_{0}^{\varepsilon} \right \|_{\mathcal{H}^{0} }^{2}+\left \| u_{0} ^{* } \right \|^{2}_{\mathcal{H} ^{0}}+1 ) \nonumber
\\&~~~+C(\left \| u_{0}^{\varepsilon} \right \|_{\mathcal{H}^{1} }^{2}+\left \| u_{0} ^{* } \right \|^{2}_{\mathcal{H} ^{1}}+\left \| u_{0} ^{* } \right \|^{6}_{\mathcal{H} ^{1}}+1 )d,\nonumber
	\end{align*}
where $S_{2}^{1}(u ^{*}(t) ):=(u ^{*}(t))^{T}u ^{*}(t), S^{2}_{2}(u ^{*}(t)):= \Psi _{N}(\left | u ^{*}(t) \right |^{2} )u ^{*}(t)$. Based on assumption \textbf{(H4)}, we obtain
\begin{align}\label{s32}
	&(\left \|\mathbb{E}\int_{nd}^{(n+1)d} \eta _{2}^{\varepsilon } (s)S^{1}_{2}(u ^{*}(nd) )-\eta _{2}^{*} S^{1}_{2}(u ^{*}(nd) )  \text{d}s\right \|^{2}_{\mathcal{H} ^{0}})^{\frac{1}{2} }\nonumber
\\&=(\left \|\mathbb{E}\int_{nd}^{(n+1)d} \eta _{2} (\frac{s}{\varepsilon } )(u ^{*}(nd))^{T}u ^{*}(nd)-\eta _{2}^{*} (u ^{*}(nd))^{T}u ^{*}(nd)  \text{d}s\right \|^{2}_{\mathcal{H} ^{0}})^{\frac{1}{2} }\nonumber
\\&\le (\mathbb{E}\left \|u ^{*}(nd)\right \|^{4}_{L ^{4}}\left |\int_{nd}^{(n+1)d} \eta _{2} (\frac{s}{\varepsilon } )-\eta _{2}^{*}   \text{d}s\right |^{2})^{\frac{1}{2} }
\\&\le (\mathbb{E}\left \|u ^{*}(nd)\right \|^{4}_{\mathcal{H} ^{1}})^{\frac{1}{2} }\left |\int_{nd}^{(n+1)d} \eta _{2} (\frac{s}{\varepsilon } )-\eta _{2}^{*}   \text{d}s\right |\nonumber
\\&\le \varepsilon C(\left \|u ^{*}_{0}\right \|^{4}_{\mathcal{H} ^{1}}+1)\left |\int_{\frac{nd}{\varepsilon } }^{\frac{(n+1)d}{\varepsilon }} \eta _{2} (r )-\eta _{2}^{*}  \text{d}r\right |\nonumber
\\&\le C_{T}\mathcal{R} _{2}(\frac{d}{\varepsilon } )(\left \| u ^{*}_{0}  \right \|^{4}_{\mathcal{H} ^{1}}+1)d,\nonumber
	\end{align}
and
\begin{align}\label{s31}
	&(\left \|\mathbb{E}\int_{nd}^{(n+1)d} \eta _{2}^{\varepsilon } (s)S^{2}_{2}(u ^{*}(nd) )-\eta _{2}^{*} S^{2}_{2}(u ^{*}(nd) )  \text{d}s\right \|^{2}_{\mathcal{H} ^{0}})^{\frac{1}{2} }\nonumber
\\&=(\left \|\mathbb{E}\int_{nd}^{(n+1)d} \eta _{2} (\frac{s}{\varepsilon } )\Psi _{N}(\left | u ^{*}(nd) \right |^{2} )u ^{*}(nd)-\eta _{2}^{*} \Psi _{N}(\left | u ^{*}(nd) \right |^{2} )u ^{*}(nd)  \text{d}s\right \|^{2}_{\mathcal{H} ^{0}})^{\frac{1}{2} }\nonumber
\\&\le (\mathbb{E}\left \|u ^{*}(nd)\right \|^{6}_{L ^{6}}\left |\int_{nd}^{(n+1)d} \eta _{2} (\frac{s}{\varepsilon } )-\eta _{2}^{*}   \text{d}s\right |^{2})^{\frac{1}{2} }
\\&\le C_{T}\mathcal{R} _{2}(\frac{d}{\varepsilon } )(\left \| u ^{*}_{0}  \right \|^{6}_{\mathcal{H} ^{1}}+1)d.\nonumber
	\end{align}
Substituting \eqref{s15}-\eqref{s31} into \eqref{s13} implies
\begin{align}\label{a33}
	\Sigma _{2}&\le \frac{a_{1} }{2}    \mathbb{E}\int_{0}^{T\wedge \tau _{R}}\left \| \nabla u^{\varepsilon}(s)-\nabla u^{*}(s) \right \| ^{2}_{\mathcal{H} ^{0}}\text{d}s
+C_{a_{1},a_{4},R}\mathbb{E}\int_{0}^{T\wedge \tau _{R}}\left \| u^{\varepsilon}(s)-u^{*}(s) \right \| ^{2 }_{\mathcal{H} ^{0}}\text{d}s
\\&~~~+C(\left \| u_{0}^{\varepsilon} \right \|_{\mathcal{H}^{1} }^{2}+\left \| u_{0} ^{* } \right \|^{2}_{\mathcal{H} ^{1}}+\left \| u_{0} ^{* } \right \|^{6}_{\mathcal{H} ^{1}}+\left \| u_{0} ^{* } \right \|^{4}_{\mathcal{H} ^{1}}+1 )(\mathcal{R} _{2}(\frac{d}{\varepsilon } )+d^{\frac{1}{4} }+d^{\frac{1}{16} }).\nonumber
	\end{align}
Given that $f^{\varepsilon}$  satisfies locally weak
monotonicity conditions, by employing a similar argument as in $\Sigma _{1}$,  along with H\"older's inequality, Jensen's inequality and Lemma \ref{le1} we obtain
\begin{align}\label{a8}
	\Sigma _{3}&=\mathbb{E}\underset{t\in [0,T\wedge \tau _{R}]}{\sup}\int_{0}^{t}\langle u^{\varepsilon}(s)-u^{*}(s),f^{\varepsilon}(s,u^{\varepsilon}(s))-f^{*}(u^{*}(s)) \rangle_{\mathcal{H} ^{0}}\text{d}s\nonumber
\\&\le \mathbb{E}\underset{t\in [0,T\wedge \tau _{R}]}{\sup}\int_{0}^{t}\langle u^{\varepsilon}(s)-u^{*}(s),f^{\varepsilon}(s,u^{\varepsilon}(s))-f^{\varepsilon}(s,u^{*}(s))\rangle_{\mathcal{H} ^{0}}\text{d}s\nonumber
\\&~~~+\mathbb{E}\underset{t\in [0,T\wedge \tau _{R}]}{\sup}\int_{0}^{t}\langle u^{\varepsilon}(s)-u^{*}(s),f^{\varepsilon}(s,u^{*}(s))-f^{*}(u^{*}(s))\rangle_{\mathcal{H} ^{0}}\text{d}s\nonumber
\\&\le c\int_{0}^{T\wedge \tau _{R}}\mathcal{A} (\mathbb{E}\left \| u^{\varepsilon }(s) -u^{* }(s) \right \|^{2 }_{\mathcal{H} ^{0}})\text{d}s+\mathbb{E}\underset{t\in [0,T\wedge \tau _{R}]}{\sup}\int_{0}^{t}\langle u^{\varepsilon}(s)-\overline{u} ^{\varepsilon}(s),f^{\varepsilon}(s,u^{*}(s))-f^{*}(u^{*}(s))\rangle_{\mathcal{H} ^{0}}\text{d}s\nonumber
\\&~~~+\mathbb{E}\underset{t\in [0,T\wedge \tau _{R}]}{\sup}\int_{0}^{t}\langle \overline{u} ^{\varepsilon}(s)-\overline{u} ^{*}(s),f^{\varepsilon}(s,u^{*}(s))-f^{*}(u^{*}(s))\rangle_{\mathcal{H} ^{0}}\text{d}s
\\&~~~+\mathbb{E}\underset{t\in [0,T\wedge \tau _{R}]}{\sup}\int_{0}^{t}\langle \overline{u} ^{*}(s)-u^{*}(s),f^{\varepsilon}(s,u^{*}(s))-f^{*}(u^{*}(s))\rangle_{\mathcal{H} ^{0}}\text{d}s\nonumber
\\&\le c\int_{0}^{T\wedge \tau _{R}}\mathcal{A} (\mathbb{E}\left \| u^{\varepsilon }(s) -u^{* }(s) \right \|^{2 }_{\mathcal{H} ^{0}})\text{d}s+\mathbb{E}\underset{t\in [0,T\wedge \tau _{R}]}{\sup}\int_{0}^{t}\langle \overline{u} ^{\varepsilon}(s)-\overline{u} ^{*}(s),f^{\varepsilon}(s,u^{*}(s))-f^{*}(u^{*}(s))\rangle_{\mathcal{H} ^{0}}\text{d}s\nonumber
\\&~~~+(\mathbb{E}\int_{0}^{T}(\left \| f^{\varepsilon}(s,u^{*}(s))\right \|^{2}_{\mathcal{H} ^{0}} +\left \| f^{*}(u^{*}(s))\right \|^{2}_{\mathcal{H} ^{0}}) \text{d}s)^{\frac{1}{2} }(\mathbb{E}\int_{0}^{T} \left \| u^{\varepsilon }(s)- \overline{u}  ^{\varepsilon}(s) \right \|^{2}_{\mathcal{H} ^{0}} \text{d}s)^{\frac{1}{2} }\nonumber
\\&~~~+(\mathbb{E}\int_{0}^{T}(\left \| f^{\varepsilon}(s,u^{*}(s))\right \|^{2}_{\mathcal{H} ^{0}} +\left \| f^{*}(u^{*}(s))\right \|^{2}_{\mathcal{H} ^{0}}) \text{d}s)^{\frac{1}{2} }(\mathbb{E}\int_{0}^{T} \left \| u^{*}(s)- \overline{u}  ^{*}(s) \right \|^{2}_{\mathcal{H} ^{0}} \text{d}s)^{\frac{1}{2} }\nonumber
\\&\le c\int_{0}^{T\wedge \tau _{R}}\mathcal{A} (\mathbb{E}\left \| u^{\varepsilon }(s) -u^{* }(s) \right \|^{2 }_{\mathcal{H} ^{0}})\text{d}s+\mathbb{E}\underset{t\in [0,T\wedge \tau _{R}]}{\sup}\int_{0}^{t}\langle \overline{u} ^{\varepsilon}(s)-\overline{u} ^{*}(s),f^{\varepsilon}(s,u^{*}(s))-f^{*}(u^{*}(s))\rangle_{\mathcal{H} ^{0}}\text{d}s\nonumber
\\&~~~+C_{T}(\left \| u_{0} ^{\varepsilon } \right \|^{2}_{\mathcal{H} ^{1}}+\left \| u_{0} ^{\varepsilon } \right \|^{6}_{\mathcal{H} ^{1}}+\left \| u_{0} ^{* } \right \|^{2}_{\mathcal{H} ^{1}}+\left \| u_{0} ^{* } \right \|^{6}_{\mathcal{H} ^{1}}+1 )d^{\frac{1}{4} }.\nonumber
	\end{align}
The next critical task is to estimate $\mathbb{E}\underset{t\in [0,T\wedge \tau _{R}]}{\sup}\int_{0}^{t}\langle \overline{u} ^{\varepsilon}(s)-\overline{u} ^{*}(s),f^{\varepsilon}(s,u^{*}(s))-f^{*}(u^{*}(s))\rangle_{\mathcal{H} ^{0}}\text{d}s$. By Lemma \ref{le1} and  Jensen's inequality, we have
\begin{align}\label{a13}
	&\mathbb{E}\underset{t\in [0,T\wedge \tau _{R}]}{\sup}\int_{0}^{t}\langle \overline{u} ^{\varepsilon}(s)-\overline{u} ^{*}(s),f^{\varepsilon}(s,u^{*}(s))-f^{*}(u^{*}(s))\rangle_{\mathcal{H} ^{0}}\text{d}s \nonumber
\\&\le\mathbb{E}\underset{t\in [0,T\wedge \tau _{R}]}{\sup}\int_{0}^{t}\langle \overline{u} ^{\varepsilon}(s)-\overline{u} ^{*}(s),f^{\varepsilon}(s,u^{*}(s))-f^{\varepsilon}(s,\overline{u} ^{*}(s))\rangle_{\mathcal{H} ^{0}}\text{d}s \nonumber
\\&~~~+\mathbb{E}\underset{t\in [0,T\wedge \tau _{R}]}{\sup}\int_{0}^{t}\langle \overline{u} ^{\varepsilon}(s)-\overline{u} ^{*}(s),f^{\varepsilon}(s,\overline{u} ^{*}(s))-f^{*}(\overline{u} ^{*}(s))\rangle_{\mathcal{H} ^{0}}\text{d}s\nonumber
\\&~~~+\mathbb{E}\underset{t\in [0,T\wedge \tau _{R}]}{\sup}\int_{0}^{t}\langle \overline{u} ^{\varepsilon}(s)-\overline{u} ^{*}(s),f^{*}(\overline{u} ^{*}(s))-f^{*}(u^{*}(s))\rangle_{\mathcal{H} ^{0}}\text{d}s \nonumber
\\&\le C\mathbb{E}\int_{0}^{T\wedge \tau _{R}}\left\| \overline{u} ^{\varepsilon}(s)-\overline{u} ^{*}(s)\right\|_{\mathcal{H} ^{0}}\sqrt{\mathcal{A} (\left\|u^{*}(s)-\overline{u} ^{*}(s)\right\|^{2 }_{\mathcal{H} ^{0}})}\text{d}s
\\&~~~+\mathbb{E}\underset{t\in [0,T\wedge \tau _{R}]}{\sup}\int_{0}^{t}\langle \overline{u} ^{\varepsilon}(s)-\overline{u} ^{*}(s),f^{\varepsilon}(s,\overline{u} ^{*}(s))-f^{*}(\overline{u} ^{*}(s))\rangle_{\mathcal{H} ^{0}}\text{d}s \nonumber
\\&\le [\mathcal{A} (\mathbb{E}\int_{0}^{T\wedge \tau _{R}}\left\|u^{*}(s)-\overline{u} ^{*}(s)\right\|^{2}_{\mathcal{H} ^{0}}\text{d}s)]^{\frac{1}{2} }(\mathbb{E}\int_{0}^{T\wedge \tau _{R}}\left\| \overline{u} ^{\varepsilon}(s)-\overline{u} ^{*}(s)\right\|^{2  }_{\mathcal{H} ^{0}}\text{d}s)^{\frac{1}{2 } }\nonumber
\\&~~~+\mathbb{E}\underset{t\in [0,T\wedge \tau _{R}]}{\sup}\int_{0}^{t}\langle \overline{u} ^{\varepsilon}(s)-\overline{u} ^{*}(s),f^{\varepsilon}(s,\overline{u} ^{*}(s))-f^{*}(\overline{u} ^{*}(s))\rangle_{\mathcal{H} ^{0}}\text{d}s \nonumber
\\&\le C\sqrt{\mathcal{A} ((\left \| u_{0} ^{* } \right \|^{2}_{\mathcal{H} ^{1}}+\left \| u_{0} ^{* } \right \|^{6}_{\mathcal{H} ^{1}}+\left \| u_{0} ^{\varepsilon  } \right \|^{2}_{\mathcal{H} ^{0}}+1 )d^{\frac{1 }{2} })}\nonumber
\\&~~~+\mathbb{E}\underset{t\in [0,T\wedge \tau _{R}]}{\sup}\int_{0}^{t}\langle \overline{u} ^{\varepsilon}(s)-\overline{u} ^{*}(s),f^{\varepsilon}(s,\overline{u} ^{*}(s))-f^{*}(\overline{u} ^{*}(s))\rangle_{\mathcal{H} ^{0}}\text{d}s.\nonumber
	\end{align}
In the subsequent step, we will employ the time discretization technique:
\begin{align}\label{a17}
	&\mathbb{E}\underset{t\in [0,T\wedge \tau _{R}]}{\sup}\int_{0}^{t}\langle \overline{u} ^{\varepsilon}(s)-\overline{u} ^{*}(s),f^{\varepsilon}(s,\overline{u} ^{*}(s))-f^{*}(\overline{u} ^{*}(s))\rangle_{\mathcal{H} ^{0}}\text{d}s \nonumber
\\&=\mathbb{E}\underset{t\in [0,T\wedge \tau _{R}]}{\sup}[\sum_{n=0}^{\left [ t/d  \right ]-1 } \int_{nd}^{(n+1)d}\langle u ^{\varepsilon}(nd)-u ^{*}(nd),f^{\varepsilon}(s,u ^{*}(nd))-f^{*}(u ^{*}(nd))\rangle_{\mathcal{H} ^{0}}\text{d}s  \nonumber
\\&~~~+\int_{\left [ t/d  \right ]d}^{t}\langle u ^{\varepsilon}(\left [ t/d  \right ]d)-u ^{*}(\left [ t/d  \right ]d),f^{\varepsilon}(s,u ^{*}(\left [ t/d  \right ]d))-f^{*}(u ^{*}(\left [ t/d  \right ]d))\rangle_{\mathcal{H} ^{0}}\text{d}s]
\\&\le\sum_{n=0}^{\left [ T/d  \right ]-1 } (\left \|\mathbb{E}\int_{nd}^{(n+1)d} f^{\varepsilon}(s,u ^{*}(nd))-f^{*}(u ^{*}(nd)) \text{d}s\right \|^{2}_{\mathcal{H} ^{0}})^{\frac{1}{2} } (\mathbb{E}\left \| u ^{\varepsilon}(nd)-u ^{*}(nd) \right \| ^{2}_{\mathcal{H} ^{0}})^{\frac{1}{2} } \nonumber
\\&~~~+C(\left \| u_{0} ^{* } \right \|^{2}_{\mathcal{H} ^{0}}+\left \| u^{\varepsilon} _{0 } \right \|^{2}_{\mathcal{H} ^{0}}+1 )d \nonumber
\\&\le C(\left \| u_{0} ^{* } \right \|^{2}_{\mathcal{H} ^{0}}+\left \| u^{\varepsilon} _{0 } \right \|^{2}_{\mathcal{H} ^{0}}+1 )[d+\frac{T\varepsilon }{d} \max_{0\le n\le \left [ T/d \right ]-1, n\in \mathbb{N}^{+} } (\left \|\mathbb{E}\int_{\frac{nd}{\varepsilon }}^{\frac{(n+1)d}{\varepsilon }} f(r,u ^{*}(nd))-f^{*}(u ^{*}(nd)) \text{d}r\right \|^{2}_{\mathcal{H} ^{0}})^{\frac{1}{2} }\nonumber
\\&\le C(\left \| u_{0} ^{* } \right \|^{2}_{\mathcal{H} ^{0}}+\left \| u^{\varepsilon} _{0 } \right \|^{2}_{\mathcal{H} ^{0}}+1 )[d+\mathcal{R} _{3}(\frac{d}{\varepsilon } )].\nonumber
	\end{align}
Substituting \eqref{a13} and \eqref{a17} into \eqref{a8} gives
\begin{align}\label{a36}
	\Sigma _{3}&=\mathbb{E}\underset{t\in [0,T\wedge \tau _{R}]}{\sup}\int_{0}^{t}\left \langle u^{\varepsilon}(s)-u^{*}(s),f^{\varepsilon}(t,u^{\varepsilon}(s))-f^{*}(t,u^{*}(s)) \right \rangle _{\mathcal{H} ^{0}}\text{d}s\nonumber
\\&\le c\int_{0}^{T\wedge \tau _{R}}\mathcal{A} (\mathbb{E}\left \| u^{\varepsilon }(s) -u^{* }(s) \right \|^{2 }_{\mathcal{H} ^{0}})\text{d}s+C\sqrt{\mathcal{A} ((\left \| u_{0} ^{* } \right \|^{2}_{\mathcal{H} ^{1}}+\left \| u_{0} ^{* } \right \|^{6}_{\mathcal{H} ^{1}}+\left \| u_{0} ^{\varepsilon  } \right \|^{2}_{\mathcal{H} ^{0}}+1 )d^{\frac{1 }{2} })}
\\&~~~+C_{T}(\left \| u_{0} ^{\varepsilon } \right \|^{2}_{\mathcal{H} ^{1}}+\left \| u_{0} ^{\varepsilon } \right \|^{6}_{\mathcal{H} ^{1}}+\left \| u_{0} ^{* } \right \|^{2}_{\mathcal{H} ^{1}}+\left \| u_{0} ^{* } \right \|^{6}_{\mathcal{H} ^{1}}+1 )(\mathcal{R} _{3}(\frac{d}{\varepsilon } )+d+d^{\frac{1}{4} }).\nonumber
	\end{align}
For $\Sigma _{4}$,  we have
 \begin{align}\label{a21}
		\Sigma _{4}&=\mathbb{E} \int_{0}^{T\wedge \tau _{R}}\left \| S^{\varepsilon}_{3}(s,x,u^{\varepsilon }(s))-S^{*}_{3}(x,u^{*}(s)) \right \|_{\mathscr{L}(K,\mathcal{H} ^{0})} ^{2}\text{d}s\nonumber
\\&\le \mathbb{E} \int_{0}^{T\wedge \tau _{R}}\left \| S^{\varepsilon}_{3}(s,x,u^{\varepsilon }(s))-S^{\varepsilon}_{3}(s,x,u^{*}(s)) \right \|_{\mathscr{L}(K,\mathcal{H} ^{0})} ^{2}\text{d}s\nonumber
\\&~~~+\mathbb{E} \int_{0}^{T\wedge \tau _{R}}\left \| S^{\varepsilon}_{3}(s,x,u^{*}(s))-S^{*}_{3}(x,u^{*}(s)) \right \|_{\mathscr{L}(K,\mathcal{H} ^{0})} ^{2}\text{d}s
\\&\le \sup_{t\in[0,T],x\in \mathbb{D} } \left \| \mathcal{K} (t,x) \right \|^{2}_{\mathscr{L}(K,L^{2})}\mathbb{E} \int_{0}^{T\wedge \tau _{R}}\left \| \nabla u^{\varepsilon }(s)-\nabla u^{*}(s) \right \|_{\mathcal{H} ^{0}} ^{2}\text{d}s+\Sigma _{4}^{1},\nonumber
	\end{align}
where
\begin{align}\label{a22}
	\Sigma _{4}^{1}&=\mathbb{E} \int_{0}^{T\wedge \tau _{R}}\left \| S^{\varepsilon}_{3}(s,x,u^{*}(s))-S^{*}_{3}(x,u^{*}(s)) \right \|_{\mathscr{L}(K,\mathcal{H} ^{0})} ^{2}\text{d}s \nonumber
\\&\le\mathbb{E} \int_{0}^{T\wedge \tau _{R}}\left \| S^{\varepsilon}_{3}(s,x,u^{*}(s))-S^{\varepsilon}_{3}(s,x,\overline{ u}^{*}(s)) \right \|_{\mathscr{L}(K,\mathcal{H} ^{0})} ^{2}\text{d}s
\\&~~~+\mathbb{E} \int_{0}^{T\wedge \tau _{R}}\left \| S^{\varepsilon}_{3}(s,x,\overline{ u}^{*}(s))-S^{*}_{3}(x,\overline{u}^{*}(s)) \right \|_{\mathscr{L}(K,\mathcal{H} ^{0})} ^{2}\text{d}s\nonumber
\\&~~~+\mathbb{E} \int_{0}^{T\wedge \tau _{R}}\left \| S^{*}_{3}(x,\overline{u}^{*}(s)) -S^{*}_{3}(x,u^{*}(s)) \right \|_{\mathscr{L}(K,\mathcal{H} ^{0})} ^{2}\text{d}s \nonumber
\\&:=\Sigma _{4}^{1,1}+\Sigma _{4}^{1,2}+\Sigma _{4}^{1,3}. \nonumber
	\end{align}
Then, by  H\"older's inequality and \eqref{t7}, we have
\begin{align}\label{a23}
	\Sigma _{4}^{1,1}&=\mathbb{E} \int_{0}^{T\wedge \tau _{R}}\left \| S^{\varepsilon}_{3}(s,x,u^{*}(s))-S^{\varepsilon}_{3}(s,x,\overline{ u}^{*}(s)) \right \|_{\mathscr{L}(K,\mathcal{H} ^{0})} ^{2}\text{d}s\nonumber
\\&=\mathbb{E} \int_{0}^{T\wedge \tau _{R}}\left \langle (\mathcal{K} ^{\varepsilon}(s ,x)\cdot \nabla )u^{*}(s)-(\mathcal{K} ^{\varepsilon}(s,x)\cdot \nabla )\overline{ u}^{*}(s),(\mathcal{K} ^{\varepsilon}(s,x)\cdot \nabla )u^{*}(s)-(\mathcal{K} ^{\varepsilon}(s,x)\cdot \nabla )\overline{ u}^{*}(s) \right \rangle_{\mathcal{H} ^{0}} \text{d}s\nonumber
\\&=-\mathbb{E} \int_{0}^{T\wedge \tau _{R}}\left \langle \nabla S^{\varepsilon}_{3}(s,x,u^{*}(s))-\nabla S^{\varepsilon}_{3}(s,x,\overline{ u}^{*}(s)),\mathcal{K} ^{\varepsilon}(s,x)\cdot u^{*}(s)-\mathcal{K} ^{\varepsilon}(s,x)\cdot\overline{ u}^{*}(s) \right \rangle_{\mathcal{H} ^{0}} \text{d}s\nonumber
\\&\le (\mathbb{E} \int_{0}^{T\wedge \tau _{R}}\left \|  S^{\varepsilon}_{3}(s,x,u^{*}(s)) \right \|^{2}_{\mathscr{L}(K,\mathcal{H} ^{1})} +\left \|  S^{\varepsilon}_{3}(s,x,\overline{ u}^{*}(s)) \right \|^{2}_{\mathscr{L}(K,\mathcal{H} ^{1})} \text{d}s)^{\frac{1}{2} }
\\&~~~\times (\mathbb{E} \int_{0}^{T\wedge \tau _{R}}\left \| \mathcal{K} ^{\varepsilon}(s,x)\cdot u^{*}(s)-\mathcal{K} ^{\varepsilon}(s,x)\cdot\overline{ u}^{*}(s) \right \|^{2}_{\mathcal{H} ^{0}} \text{d}s)^{\frac{1}{2} }\nonumber
\\&\le C(\mathbb{E} \int_{0}^{T\wedge \tau _{R}}[\left \|  u^{*}(s) \right \|^{2}_{\mathcal{H} ^{1}} +\left \| \overline{ u}^{*}(s) \right \|^{2}_{\mathcal{H} ^{1}}+\left \|  u^{*}(s) \right \|^{2}_{\mathcal{H} ^{2}} +\left \| \overline{ u}^{*}(s) \right \|^{2}_{\mathcal{H} ^{2}} +1] \text{d}s)^{\frac{1}{2} }\nonumber
\\&~~~\times (\mathbb{E} \int_{0}^{T\wedge \tau _{R}}\left \|u^{*}(s)-\overline{ u}^{*}(s) \right \|^{2}_{\mathcal{H} ^{0}} \text{d}s)^{\frac{1}{2} }\nonumber
\\&\le C(\left \| u_{0} ^{* } \right \|^{2}_{\mathcal{H} ^{1}}+\left \| u_{0} ^{* } \right \|^{2}_{\mathcal{H} ^{0}}+1 )d^{\frac{1}{4} }.\nonumber
	\end{align}
Similarly, for $\Sigma _{4}^{1,3}$,
\begin{align}\label{a25}
	\Sigma _{4}^{1,3}&=\mathbb{E} \int_{0}^{T\wedge \tau _{R}}\left \| S^{*}_{3}(x,\overline{u}^{*}(s)) -S^{*}_{3}(x,u^{*}(s)) \right \|_{\mathscr{L}(K,\mathcal{H} ^{0})} ^{2}\text{d}s\nonumber
\\&\le C(\left \| u_{0} ^{* } \right \|^{2}_{\mathcal{H} ^{1}}+\left \| u_{0} ^{* } \right \|^{2}_{\mathcal{H} ^{0}}+1 )d^{\frac{1}{4} }.
	\end{align}
Subsequently, we will employ the time discretization technique to address $\Sigma _{4}^{1,2}$:
\begin{align}\label{a26}
	\Sigma _{4}^{1,2}&=\mathbb{E} \int_{0}^{T\wedge \tau _{R}}\left \| S^{\varepsilon}_{3}(s,x,\overline{ u}^{*}(s))-S^{*}_{3}(x,\overline{u}^{*}(s)) \right \|_{\mathscr{L}(K,\mathcal{H} ^{0})} ^{2}\text{d}s   \nonumber
\\&\le \mathbb{E}\sum_{n=0}^{\left [ T/d  \right ]-1 } \int_{nd}^{(n+1)d}\left \| (\mathcal{K} ^{\varepsilon }(s,x)\cdot \nabla )u^{*}(nd)-(\mathcal{K} ^{* }(x)\cdot \nabla )u^{*}(nd) \right \|_{\mathscr{L}(K,\mathcal{H} ^{0})} ^{2}\text{d}s  \nonumber
\\&~~~+\mathbb{E}\int_{\left [ T/d  \right ]d}^{T}\left \| S^{\varepsilon}_{3}(s,x, u^{*}(\left [ T/d  \right ]d))-S^{*}_{3}(x,u^{*}(nd)) \right \|_{\mathscr{L}(K,\mathcal{H} ^{0})} ^{2}\text{d}s   \nonumber
\\& \le \varepsilon\sum_{n=0}^{\left [ T/d \right ]-1 } \mathbb{E}\int_{\frac{nd}{\varepsilon }}^{\frac{(n+1)d}{\varepsilon }}\left \| \mathcal{K} (r,x)-\mathcal{K} ^{* }(x) \right \|_{\mathscr{L}(K,\mathcal{H} ^{0})} ^{2}\text{d}r\cdot \mathbb{E}\left \|  u^{*}(nd) \right \|_{\mathscr{L}(K,\mathcal{H} ^{1})} ^{2}
\\&~~~+C(\left \| u_{0} ^{* } \right \|^{2}_{\mathcal{H} ^{0}}+1 )d \nonumber
\\&\le C(\left \| u_{0} ^{* } \right \|^{2}_{\mathcal{H} ^{1}}+\left \| u_{0} ^{* } \right \|^{2}_{\mathcal{H} ^{0}}+1 )[d^{\frac{1}{4} }+d+\mathcal{R}_{5}(\frac{d}{\varepsilon } )]
. \nonumber
	\end{align}
In conclusion, by \eqref{a21}-\eqref{a26}, we derive
\begin{align}\label{s51}
	\Sigma _{4}&\le \frac{a_{1}}{73} \mathbb{E} \int_{0}^{T\wedge \tau _{R}}\left \| \nabla u^{\varepsilon }(s)-\nabla u^{*}(s) \right \|_{\mathcal{H} ^{0}} ^{2}\text{d}s+C_{T}(\left \| u_{0} ^{* } \right \|^{2}_{\mathcal{H} ^{1}}+\left \| u_{0} ^{* } \right \|^{2}_{\mathcal{H} ^{0}}+1 )[d+\mathcal{R}_{5}(\frac{d}{\varepsilon } )].
	\end{align}
Similarly to the derivation of $\Sigma _{4}$, for $\Sigma _{5}$, by \textbf{(H2)}, H\"older's inequality, Jensen's inequality and Lemma \ref{le1} we obtain
\begin{align}\label{s52}
	\Sigma _{5}&=\mathbb{E}\int_{0}^{T\wedge \tau _{R}}\left \| g^{\varepsilon}(s,u^{\varepsilon }_{s})- g^{*}(u^{* }_{s})\right \|^{2}_{\mathscr{L}(K,\mathcal{H} ^{0})}\text{d}s   \nonumber
\\&\le \mathbb{E}\int_{0}^{T\wedge \tau _{R}}\left \| g^{\varepsilon}(s,u^{\varepsilon }_{s})- g^{\varepsilon}(s,u^{* }_{s})\right \|^{2}_{\mathscr{L}(K,\mathcal{H} ^{0})}\text{d}s
+\mathbb{E}\int_{0}^{T}\left \|  g^{\varepsilon}(s,u^{* }_{s})-g^{*}(u^{* }_{s})\right \|^{2}_{\mathscr{L}(K,\mathcal{H} ^{0})}\text{d}s\nonumber
\\&\le c\int_{0}^{T\wedge \tau _{R}}\mathcal{A} (\mathbb{E}\left \| u^{\varepsilon }(s) -u^{* }(s) \right \|^{2   }_{\mathcal{H} ^{0}})\text{d}s+\mathbb{E}\int_{0}^{T}\left \|  g^{\varepsilon}(s,u^{* }_{s})-g^{\varepsilon}(s,\overline{u}^{* }_{s})\right \|^{2}_{\mathscr{L}(K,\mathcal{H} ^{0})}\text{d}s\nonumber
\\&~~~+\mathbb{E}\int_{0}^{T}\left \|  g^{\varepsilon}(s,\overline{u}^{* }_{s})-g^{*}(\overline{u}^{* }_{s})\right \|^{2}_{\mathscr{L}(K,\mathcal{H} ^{0})}\text{d}s
+\mathbb{E}\int_{0}^{T}\left \|  g^{*}(\overline{u}^{* }_{s})-g^{*}(u^{* }_{s})\right \|^{2}_{\mathscr{L}(K,\mathcal{H} ^{0})}\text{d}s.   \nonumber
\\&\le c[\int_{0}^{T\wedge \tau _{R}}\mathcal{A} (\mathbb{E}\left \| u^{\varepsilon }(s) -u^{* }(s) \right \|^{2  }_{\mathcal{H} ^{0}})\text{d}s+\int_{0}^{T\wedge \tau _{R}}\mathcal{A} (\mathbb{E}\left \| u^{* }(s) -\overline{u}^{* }(s) \right \|^{2 }_{\mathcal{H} ^{0}})\text{d}s]
\\&~~~+\mathbb{E}\int_{0}^{T}\left \|  g^{\varepsilon}(s,\overline{u}^{* }_{s})-g^{*}(\overline{u}^{* }_{s})\right \|^{2}_{\mathscr{L}(K,\mathcal{H} ^{0})}\text{d}s\nonumber
\\&\le c\int_{0}^{T\wedge \tau _{R}}\mathcal{A} (\mathbb{E}\left \| u^{\varepsilon }(s) -u^{* }(s) \right \|^{2   }_{\mathcal{H} ^{0}})\text{d}s+C_{T}\mathcal{A} ((\left \| u_{0} ^{* } \right \|^{2}_{\mathcal{H} ^{1}}+\left \| u_{0} ^{* } \right \|^{6}_{\mathcal{H} ^{1}}+1 )d^{\frac{1 }{2} })\nonumber
\\&~~~+\mathbb{E}\int_{0}^{T}\left \|  g^{\varepsilon}(s,\overline{u}^{* }_{s})-g^{*}(\overline{u}^{* }_{s})\right \|^{2}_{\mathscr{L}(K,\mathcal{H} ^{0})}\text{d}s,\nonumber
	\end{align}
where
\begin{align}\label{s58}
	&\mathbb{E}\int_{0}^{T}\left \|  g^{\varepsilon}(s,\overline{u}^{* }_{s})-g^{*}(\overline{u}^{* }_{s})\right \|^{2}_{\mathscr{L}(K,\mathcal{H} ^{0})}\text{d}s \nonumber \\&=\mathbb{E}\sum_{n=0}^{\left [ T/d  \right ]-1 } \int_{nd}^{(n+1)d}\left \|  g^{\varepsilon}(s,u^{* }(nd))-g^{*}(u^{* }(nd))\right \|^{2}_{\mathscr{L}(K,\mathcal{H} ^{0})}\text{d}s  \nonumber
\\&~~~+\mathbb{E}\int_{\left [ T/d  \right ]d}^{T}\left \|  g^{\varepsilon}(s,u^{* }_{\left [ T/d  \right ]d})-g^{*}(u^{* }_{\left [ T/d  \right ]d})\right \|^{2}_{\mathscr{L}(K,\mathcal{H} ^{0})}\text{d}s  \\& \le \varepsilon\sum_{n=0}^{\left [T/d  \right ]-1 } \mathbb{E}\int_{\frac{nd}{\varepsilon }}^{\frac{(n+1)d}{\varepsilon }}\left \|  g(r,u^{* }_{nd})-g^{*}(u^{* }_{nd})\right \|^{2}_{\mathscr{L}(K,\mathcal{H} ^{0})}\text{d}r+C(1+\mathbb{E}\left \| u^{* }(\left [ T/d  \right ]d)\right \|^{2}_{h})d   \nonumber
\\&\le C_{T}(\left \|u^{* }_{0} \right \| _{\mathcal{H} ^{0}}^{2}+1)[d+\mathcal{R}  _{5}(\frac{d}{\varepsilon } )]
. \nonumber
	\end{align}
Then, by \eqref{s52} and \eqref{s58}, we have
\begin{align}\label{s60}
	\Sigma _{5}&\le c\int_{0}^{T\wedge \tau _{R}}\mathcal{A} (\mathbb{E}\left \| u^{\varepsilon }(s) -u^{* }(s) \right \|^{2  }_{\mathcal{H} ^{0}})\text{d}s+C_{T}[\mathcal{A} ((\left \| u_{0} ^{* } \right \|^{2}_{\mathcal{H} ^{1}}+\left \| u_{0} ^{* } \right \|^{6}_{\mathcal{H} ^{1}}+1 )d^{\frac{1 }{2} })\nonumber
\\&~~~+(\left \| u_{0} ^{* } \right \|^{2}_{\mathcal{H} ^{1}}+\left \| u_{0} ^{* } \right \|^{6}_{\mathcal{H} ^{1}}+1 )(d+\mathcal{R}  _{5}(\frac{d}{\varepsilon } ))].
	\end{align}
Let $$\mathcal{W}(\varepsilon ,*)=(\left \| u_{0}^{\varepsilon} \right \|_{\mathcal{H}^{1} }^{2}+\left \| u_{0}^{\varepsilon} \right \|_{\mathcal{H}^{1} }^{6}+\left \| u_{0} ^{* } \right \|^{2}_{\mathcal{H} ^{1}}+\left \| u_{0} ^{* } \right \|^{4}_{\mathcal{H} ^{1}}+\left \| u_{0} ^{* } \right \|^{6}_{\mathcal{H} ^{1}}+1 ),$$ and substituting \eqref{a19}, \eqref{a33}, \eqref{a36}, \eqref{s51} and \eqref{s60} into \eqref{t335} implies
\begin{align}\label{c222}
		 & \mathbb{E} (\underset{r\in[0,t]}{\sup}\left \| u^{\varepsilon}(r\wedge \tau _{R};u^{\varepsilon}_{0} )-u^{*}(r\wedge \tau _{R};u^{*}_{0}) \right \| _{\mathcal{H} ^{0}}^{2}) \nonumber
 \\&\le 2\left \| u^{\varepsilon}_{0}-u^{*}_{0} \right \| _{\mathcal{H} ^{0}}^{2}+C_{a_{1},a_{4},R,c}\int_{0}^{t}[\mathbb{E}\left \| u^{\varepsilon}(s\wedge \tau _{R})-u^{*}(s\wedge \tau _{R}) \right \| ^{2 }_{\mathcal{H} ^{0}}+\mathcal{A} (\mathbb{E}\left \| u^{\varepsilon}(s\wedge \tau _{R})-u^{*}(s\wedge \tau _{R}) \right \| ^{2 }_{\mathcal{H} ^{0}})]\text{d}s\nonumber
 \\&~~~+C_{T}[\mathcal{W}(\varepsilon ,*)(\sum_{i=1}^{5} \mathcal{R} _{i}(\frac{d}{\varepsilon } )+d+d^{\frac{1}{4} }+d^{\frac{1}{16} })
 +\mathcal{A} (\mathcal{W}(\varepsilon ,*)d^{\frac{1 }{2} })
+\sqrt{\mathcal{A} (\mathcal{W}(\varepsilon ,*)d^{\frac{1 }{2} })}]
 \\&:=\phi(t),\nonumber
	\end{align}
for any $t\in [0,T]$. For simplicity of description, let $d=\varepsilon ^{\frac{1}{2} }$,
then,  we have
\begin{align*}
		 \phi(t)&=2\left \| u^{\varepsilon}_{0}-u^{*}_{0} \right \| _{\mathcal{H} ^{0}}^{2}+C\int_{0}^{t}[\mathbb{E}\left \| u^{\varepsilon}(s\wedge \tau _{R})-u^{*}(s\wedge \tau _{R}) \right \| ^{2 }_{\mathcal{H} ^{0}}+\mathcal{A} (\mathbb{E}\left \| u^{\varepsilon}(s\wedge \tau _{R})-u^{*}(s\wedge \tau _{R}) \right \| ^{2 }_{\mathcal{H} ^{0}})]\text{d}s
\\&~~~+C_{T}[\mathcal{W}(\varepsilon ,*)(\sum_{i=1}^{5} \mathcal{R} _{i}(\varepsilon ^{-\frac{1}{2} } )+\varepsilon^{\frac{1}{32} })+\mathcal{A} (\mathcal{W}(\varepsilon ,*)\varepsilon ^{\frac{1}{4} })
+\sqrt{\mathcal{A} (\mathcal{W}(\varepsilon ,*)\varepsilon ^{\frac{1}{4} })}].
	\end{align*}
 Indeed, we assert that when $\lim_{\varepsilon  \to 0} \left \|u_{0}^{\varepsilon }- u_{0} ^{*} \right \|^{2}_{\mathcal{H} ^{0}}=0$,
 \begin{align}\label{c121}
\lim_{\varepsilon  \to 0} \mathbb{E}\underset{t\in [0,T]}{\sup}   \left \|u^{\varepsilon }(t\wedge \tau _{R};u_{0}^{\varepsilon }) -u^{* }(t\wedge \tau _{R};u_{0} ^{*}) \right \|^{2}_{\mathcal{H} ^{0}}=0.
\end{align}
For any fixed $T>0$,
\begin{align}\label{c2}
		\Gamma (\mathbb{E}\underset{t\in [0,T]}{\sup}   \left \|u^{\varepsilon }(t\wedge \tau _{R};u_{0} ^{\varepsilon }) -u^{* }(t\wedge \tau _{R};u_{0}^{*}) \right \|^{2}_{\mathcal{H} ^{0}})\le \Gamma ( \phi  (T)),
	\end{align}
further,
\begin{align}\label{c3}
		&\Gamma( \phi  (T))\nonumber\\&=\Gamma(\phi  (0))+\int_{0}^{T}\Gamma'( \phi  (s))\text{d} \phi (s) \nonumber
\\&\le \Gamma(2\left \| u^{\varepsilon}_{0}-u^{*}_{0} \right \| _{\mathcal{H} ^{0}}^{2} +C_{T}\mathcal{W}(\varepsilon ,*)[(\sum_{i=1}^{5} \mathcal{R} _{i}(\varepsilon ^{-\frac{1}{2} } )+\varepsilon^{\frac{1}{32} })+\mathcal{A} (\mathcal{W}(\varepsilon ,*)\varepsilon ^{\frac{1}{4} })
+\sqrt{\mathcal{A} (\mathcal{W}(\varepsilon ,*)\varepsilon ^{\frac{1}{4} })}])\nonumber
\\&~~~+C\int_{0}^{T} \frac{ \mathbb{E}\underset{r\in [0,s]}{\sup}\left \| u^{\varepsilon }(r\wedge \tau _{R};u^{\varepsilon }_{0}) -u^{* }(r\wedge \tau _{R};u ^{*}_{0})\right \|^{2}_{\mathcal{H} ^{0}} }{ \mathcal{A}(\phi(s))+\phi(s)}\text{d}s
\\&~~~+C\int_{0}^{T} \frac{ \mathcal{A} (\mathbb{E}\left \| u^{\varepsilon}(s\wedge \tau _{R})-u^{*}(s\wedge \tau _{R}) \right \| ^{2 }_{\mathcal{H} ^{0}}) }{ \mathcal{A}(\phi(s))+\phi(s)}\text{d}s  \nonumber
\\&\le \Gamma(2\left \| u^{\varepsilon}_{0}-u^{*}_{0} \right \| _{\mathcal{H} ^{0}}^{2} +C_{T}\mathcal{W}(\varepsilon ,*)[(\sum_{i=1}^{5} \mathcal{R} _{i}(\varepsilon ^{-\frac{1}{2} } )+\varepsilon^{\frac{1}{32} })+\mathcal{A} (\mathcal{W}(\varepsilon ,*)\varepsilon ^{\frac{1}{4} })
+\sqrt{\mathcal{A} (\mathcal{W}(\varepsilon ,*)\varepsilon ^{\frac{1}{4} })}])+C_{T}.\nonumber
	\end{align}
By \eqref{c2} and \eqref{c3}, when $\varepsilon\to 0$, we obtain
\begin{align}\label{c999}
\Gamma (\mathbb{E}\underset{t\in [0,T]}{\sup}   \left \|u^{\varepsilon }(t\wedge \tau _{R};u_{0} ^{\varepsilon }) -u^{* }(t\wedge \tau _{R};u_{0}^{*}) \right \|^{2}_{\mathcal{H} ^{0}})\to -\infty,
	\end{align}
 which implies
 \begin{align*}
\lim_{\varepsilon  \to 0} \mathbb{E}\underset{t\in [0,T]}{\sup}   \left \|u^{\varepsilon }(t\wedge \tau _{R};u_{0}^{\varepsilon }) -u^{* }(t\wedge \tau _{R};u_{0} ^{*}) \right \|^{2}_{\mathcal{H} ^{0}}=0.
\end{align*}
Furthermore, by applying Remark \ref{re1} and Fatou's lemma, and letting $R\to \infty$, we get
\begin{align*}
\lim_{\varepsilon  \to 0} \mathbb{E}\underset{t\in [0,T]}{\sup}   \left \|u^{\varepsilon }(t;u_{0}^{\varepsilon }) -u^{* }(t;u_{0} ^{*}) \right \|^{2}_{\mathcal{H} ^{0}}=0.
\end{align*}
This completes the proof.
\end{proof}

If $\eta _{1}^{\varepsilon}(t )=\eta _{1}^{*}=\eta _{2}^{\varepsilon}(t )=\eta _{2}^{*}=1$ and $\mathcal{K} (t,x)=0$, it follows that we can establish the averaging principle within the framework of first-order Sobolev spaces $\mathcal{H} ^{1}$. Firstly, it is essential to introduce the following assumptions:
~\\
\\\textbf{(H5')} There exist functions $\mathcal{R}_{3} $, $\mathcal{R}_{4} $, $\mathcal{R}_{5} $  and $f^{*}\in C(\mathcal{H} ^{1}(\mathbb{D} ),\mathcal{H} ^{1}(\mathbb{D} ))$, $g^{*}\in C(\mathcal{H} ^{1}(\mathbb{D} ),\mathscr{L}(K,\mathcal{H} ^{1}(\mathbb{D} )) )$ such that for any $T>0$, $t\in [0,T]$, $u \in \mathcal{H} ^{1}(\mathbb{D} )$ and $x\in \mathbb{D}$,
\begin{align*}
		\frac{1}{T} \left \| \int_{t}^{t+T}[f(s,u ) -f^{*}(u )] \text{d}s\right \| _{\mathcal{H} ^{1}}\le \mathcal{R}_{3}(T)(\left \| u \right \|_{\mathcal{H} ^{1}}+M ),
	\end{align*}
\begin{align*}
		\frac{1}{T}\int_{t}^{t+T} \left \| g(s,u ) -g^{*}(u )\right \|^{2} _{\mathscr{L}(K,\mathcal{H} ^{1}) } \text{d}s\le \mathcal{R}_{4}(T)(\left \| u \right \|^{2}_{\mathcal{H} ^{1}}+M ).
	\end{align*}
Then the second result on the averaging principle is as follows:
\begin{theorem}\label{th3} Consider \eqref{a2} and \eqref{a222}. Suppose that the assumptions \textbf{(H1)}, \textbf{(H2)}, \textbf{(H6)} and \textbf{(H5')} hold and $\eta _{1}^{\varepsilon}(t )=\eta _{1}^{*}=\eta _{2}^{\varepsilon}(t )=\eta _{2}^{*}=1$, $\mathcal{K} (t,x)=0$.  For any initial values $u_{0}^{\varepsilon }, u_{0} ^{*}\in \mathcal{H} ^{1}$ and $T > 0$, assume further that $\lim_{\varepsilon  \to 0} \left \|u_{0}^{\varepsilon }- u_{0} ^{*} \right \|^{2}_{\mathcal{H} ^{1}}=0$. Then, we have
\begin{align}\label{c12121}
		\lim_{\varepsilon  \to 0} \mathbb{E}\underset{t\in [0,T]}{\sup}   \left \|u^{\varepsilon }(t;u_{0}^{\varepsilon }) -u^{* }(t;u_{0} ^{*}) \right \|^{2}_{\mathcal{H} ^{1}}=0.
	\end{align}
\end{theorem}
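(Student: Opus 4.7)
The plan is to adapt the proof of Theorem \ref{th2} to the $\mathcal{H}^1$-norm, exploiting the drastic simplifications caused by $\eta_1^\varepsilon \equiv \eta_2^\varepsilon \equiv 1$ and $\mathcal{K}\equiv 0$: the blocks $\Sigma_1,\Sigma_2,\Sigma_4$ of \eqref{t335}, which handled the averaging of $\eta_1,\eta_2,\mathcal{K}$, collapse, so the error decomposition reduces to a purely monotone dissipative/nonlinear term together with the $f$ and $g$ averaging contributions. I would first introduce the stopping time $\tau_R=\inf\{t\ge 0:\|u^\varepsilon(t;u_0^\varepsilon)\|_{\mathcal{H}^1}\vee\|u^*(t;u_0^*)\|_{\mathcal{H}^1}>R\}$ and apply It\^o's formula to $\|u^\varepsilon(t\wedge\tau_R)-u^*(t\wedge\tau_R)\|_{\mathcal{H}^1}^2$, using \eqref{t3} to produce the dissipation $-2\nu\int_0^{t\wedge\tau_R}\|u^\varepsilon-u^*\|_{\mathcal{H}^2}^2\,\mathrm{d}s$, which will absorb the Young-remainders from the nonlinear and stochastic terms. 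The cross term $\langle u^\varepsilon-u^*,S_2(u^\varepsilon)-S_2(u^*)\rangle_{\mathcal{H}^1}$ is controlled via \eqref{t5}, \eqref{t555}, \eqref{k1} and \eqref{k2}, using the bound $\|u^\varepsilon\|_{\mathcal{H}^1}\vee\|u^*\|_{\mathcal{H}^1}\le R$ up to $\tau_R$ to produce an estimate of the form $C_{\nu,N,R}\|u^\varepsilon-u^*\|_{\mathcal{H}^1}^2 + \tfrac{\nu}{4}\|u^\varepsilon-u^*\|_{\mathcal{H}^2}^2$, in exact analogy with \eqref{s15}.

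For the averaging of $f$ I would mimic the three-step chain \eqref{a8}--\eqref{a17}, inserting $\bar u^\varepsilon,\bar u^*$ and working in $\mathcal{H}^1$: the difference $f^\varepsilon(s,u^\varepsilon)-f^*(u^*)$ is split into $f^\varepsilon(s,u^\varepsilon)-f^\varepsilon(s,u^*)$ (controlled by the locally weak monotonicity in \textbf{(H2)} to produce $c\mathcal{A}(\mathbb{E}\|u^\varepsilon-u^*\|_{\mathcal{H}^1}^2)$), $f^\varepsilon(s,u^*)-f^\varepsilon(s,\bar u^*)$ and $f^*(\bar u^*)-f^*(u^*)$ (controlled via \textbf{(H6)} together with the $\mathcal{H}^1$ version of Lemma \ref{le1}), and the decisive term $f^\varepsilon(s,\bar u^*)-f^*(\bar u^*)$. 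The latter is handled by Khasminskii time discretization with mesh $d$, bounding
\begin{align*}
\Bigl\|\mathbb{E}\int_{nd}^{(n+1)d}\bigl[f^\varepsilon(s,u^*(nd))-f^*(u^*(nd))\bigr]\,\mathrm{d}s\Bigr\|_{\mathcal{H}^1}^2 \le C\,\varepsilon^2\,\mathcal{R}_3\!\Bigl(\tfrac{d}{\varepsilon}\Bigr)^2\bigl(\mathbb{E}\|u^*(nd)\|_{\mathcal{H}^1}^2+M\bigr),
\end{align*}
thanks to \textbf{(H5')} and the moment bound from Theorem \ref{th1}. The diffusion term $\|g^\varepsilon(s,u^\varepsilon)-g^*(u^*)\|^2_{\mathscr{L}(K,\mathcal{H}^1)}$ is treated analogously, now using the $\mathcal{H}^1$-bound in \textbf{(H5')} in place of the $L^2$-bound, and giving a $\mathcal{R}_4(d/\varepsilon)$ remainder.

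Collecting the estimates and invoking the BDG inequality exactly as in \eqref{t335} yields
\begin{align*}
\mathbb{E}\sup_{r\in[0,t]}\|u^\varepsilon(r\wedge\tau_R)-u^*(r\wedge\tau_R)\|_{\mathcal{H}^1}^2 \le \phi(t),
\end{align*}
where $\phi(t)=2\|u_0^\varepsilon-u_0^*\|_{\mathcal{H}^1}^2 + C_R\int_0^t[\mathbb{E}\|\cdots\|_{\mathcal{H}^1}^2+\mathcal{A}(\mathbb{E}\|\cdots\|_{\mathcal{H}^1}^2)]\,\mathrm{d}s + C_T\mathcal{W}(\varepsilon,*)\bigl[\mathcal{R}_3(\varepsilon^{-1/2})+\mathcal{R}_4(\varepsilon^{-1/2})+\varepsilon^{1/32}\bigr]+\mathcal{A}(\mathcal{W}(\varepsilon,*)\varepsilon^{1/4})+\sqrt{\mathcal{A}(\mathcal{W}(\varepsilon,*)\varepsilon^{1/4})}$ after the choice $d=\varepsilon^{1/2}$. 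Applying the same control-function argument used in \eqref{r13}--\eqref{r15} and \eqref{c2}--\eqref{c999} with $\Gamma(t)=\int_\iota^t(\mathcal{A}(s)+s)^{-1}\,\mathrm{d}s$, the hypothesis $\lim_{\varepsilon\to 0}\|u_0^\varepsilon-u_0^*\|_{\mathcal{H}^1}^2=0$ together with $\mathcal{R}_i(\varepsilon^{-1/2})\to 0$ forces $\Gamma(\phi(T))\to -\infty$, so $\mathbb{E}\sup_{t\in[0,T]}\|u^\varepsilon(t\wedge\tau_R)-u^*(t\wedge\tau_R)\|_{\mathcal{H}^1}^2\to 0$. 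Sending $R\to\infty$ via Theorem \ref{th1} and Fatou's lemma removes the stopping time and yields \eqref{c12121}.

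The main obstacle I expect is the $\mathcal{H}^1$-level estimate of the nonlinear block. Unlike in the $\mathcal{H}^0$ case, the bilinear pairing now sees a derivative falling on $S_2^1(u)=(u,\nabla)u$, generating terms of the form $\langle\nabla u^\varepsilon-\nabla u^*,(\nabla u^\varepsilon,\nabla)u^\varepsilon-(\nabla u^*,\nabla)u^*\rangle_{\mathcal{H}^0}$ that do not have a clean cancellation structure. To close the estimate I must route them through the $L^2_t\mathcal{H}^2_x$-bound of Theorem \ref{th1} using \eqref{k1}, \eqref{k2} and the stopping time $\tau_R$, being careful to preserve the damping $\|\,|u|\,|\nabla u|\,\|_{\mathcal{H}^0}^2$ provided by \eqref{t555} so that the tame-term contribution remains absorbable. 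This is the delicate energy-balance step; everything else then follows by a straightforward, if lengthy, replay of the $\mathcal{H}^0$ machinery developed for Theorem \ref{th2}.
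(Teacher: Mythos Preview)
Your overall scheme (Khasminskii discretization for $f,g$ in $\mathcal{H}^1$, then the $\Gamma$-control function argument to bypass Gronwall under locally weak monotonicity) matches the paper, but there is a genuine gap in the nonlinear energy balance.

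The claim that $\langle u^\varepsilon-u^*,S_2(u^\varepsilon)-S_2(u^*)\rangle_{\mathcal{H}^1}$ can be bounded by $C_{\nu,N,R}\|u^\varepsilon-u^*\|_{\mathcal{H}^1}^2+\tfrac{\nu}{4}\|u^\varepsilon-u^*\|_{\mathcal{H}^2}^2$ under your stopping time $\tau_R$ does not hold. In $\mathcal{H}^1$ the estimate of the convective difference (paper's \eqref{113}) unavoidably produces, after Young's inequality, a term of the form
\[
C\,\|u^\varepsilon(s)-u^*(s)\|_{\mathcal{H}^1}^2\,\|u^*(s)\|_{\mathcal{H}^2}^2,
\]
and the random coefficient $\|u^*(s)\|_{\mathcal{H}^2}^2$ is \emph{not} bounded by an $\mathcal{H}^1$-only stopping time. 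Routing ``through the $L^2_t\mathcal{H}^2_x$-bound of Theorem~\ref{th1}'', as you propose, gives only an expectation bound on $\int_0^T\|u^*\|_{\mathcal{H}^2}^2\,\mathrm{d}s$, which cannot be pulled inside the Gronwall loop: the desired inequality $\phi(t)=\ldots+C_R\int_0^t[\mathbb{E}\|\cdot\|_{\mathcal{H}^1}^2+\mathcal{A}(\cdot)]\,\mathrm{d}s+\ldots$ with a deterministic $C_R$ simply fails.

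The paper repairs this in two steps. First, it strengthens the stopping time to
\[
\tau_M=\inf\Bigl\{t:\ \|u^\varepsilon(t)\|_{\mathcal{H}^1}^2\vee\|u^*(t)\|_{\mathcal{H}^1}^2\vee\int_0^t\|u^\varepsilon(s)\|_{\mathcal{H}^2}^2\,\mathrm{d}s\vee\int_0^t\|u^*(s)\|_{\mathcal{H}^2}^2\,\mathrm{d}s>M\Bigr\},
\]
so that the offending time integrals are truncated. Second, it applies the ordinary (not stochastic) Gronwall lemma \emph{pathwise}, before taking $\sup_t$ or expectation: from \eqref{111}--\eqref{113} one absorbs the $\|u^\varepsilon-u^*\|_{\mathcal{H}^2}^2$-terms and obtains a pointwise inequality whose Gronwall exponential $\exp\bigl[\int_0^T(1+\|u^\varepsilon\|_{\mathcal{H}^1}^4+\|u^*\|_{\mathcal{H}^1}^4+\|u^*\|_{\mathcal{H}^2}^2)\,\mathrm{d}s\bigr]$ is, on $\{t\le\tau_M\}$, bounded by a deterministic $C_{M,T}$ (see \eqref{115}). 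Only after this do you take expectation and apply BDG, arriving at \eqref{116}; from there your treatment of $\mathcal{Z}_1,\mathcal{Z}_2$ via \textbf{(H5')}, \textbf{(H6)}, Lemma~\ref{le1} and the $\Gamma$-argument is exactly what the paper does.
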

\begin{remark}\label{re2} When $\eta _{1}^{\varepsilon}(t )=\eta _{1}^{*}=\eta _{2}^{\varepsilon}(t )=\eta _{2}^{*}=1$ and $\mathcal{K} (t,x)=0$, it follows that the strong convergence \eqref{c12121} is established within the first-order Sobolev space $\mathcal{H} ^{1}$, and this result is evidently stronger than \eqref{c1}. Indeed, under the conditions  \textbf{(H1)} and \textbf{(H2)}, the derivation of the coefficients $f$, $g$ is analogous to that in Theorem \ref{th2}, primarily employing the  Khasminskii time discretization. The critical aspect involves employing stopping techniques and prior estimation to analyze $S_{1}(u )$ and $S_{2}(u )$, ultimately leading to \eqref{c12121}.
\end{remark}
\begin{proof}
Let $$\tau _{M}=\underset{t\in[0,T]}{\inf} \{\left \| u^{\varepsilon}(t;u^{\varepsilon}_{0} ) \right \| _{\mathcal{H} ^{1}}^{2}\vee \left \| u^{*}(t;u^{*}_{0}) \right \| _{\mathcal{H} ^{1}}^{2}\vee\int_{0}^{t} \left \| u^{\varepsilon }(s)\right \| ^{2}_{\mathcal{H}^{2}}\text{d}s\vee \int_{0}^{t} \left \| u^{* }(s)\right \| ^{2}_{\mathcal{H}^{2}}\text{d}s>M \}.$$ By applying It$\hat{\text{o}} $'s formula formula to $\left \| u^{\varepsilon}(t;u^{\varepsilon}_{0} )-u^{*}(t;u^{*}_{0}) \right \| _{\mathcal{H} ^{1}}^{2}$, and incorporating \textbf{(H3)}, Young's inequality and the B-D-G inequality, we derive
\begin{align}\label{111}
		 & \left \| u^{\varepsilon}(t;u^{\varepsilon}_{0} )-u^{*}(t;u^{*}_{0}) \right \| _{\mathcal{H} ^{1}}^{2} \nonumber
\\&=\left \| u^{\varepsilon}_{0}-u^{*}_{0} \right \| _{\mathcal{H} ^{1}}^{2}+\int_{0}^{t} [2 \left \langle u^{\varepsilon}(s)-u^{*}(s),S_{1}(u^{\varepsilon}(s) )- S_{1}(u^{*}(s) ) \right \rangle _{\mathcal{H} ^{1}}
\\&~~~-2  \left \langle u^{\varepsilon}(s)-u^{*}(s),S_{2}(u^{\varepsilon}(s) )- S_{2}(u^{*}(s) ) \right \rangle _{\mathcal{H} ^{1}}+2 \left \langle u^{\varepsilon}(s)-u^{*}(s),f^{\varepsilon}(s,u^{\varepsilon}(s))-f^{*}(u^{*}(s)) \right \rangle _{\mathcal{H} ^{1}}\nonumber
\\&~~~+\left \| g^{\varepsilon}(s,u^{\varepsilon}(s))-g^{*}(u^{*}(s)) \right \|_{\mathscr{L}(K,\mathcal{H} ^{1})} ^{2}]\text{d}s+2\int_{0}^{t}\left \langle u^{\varepsilon}(s)-u^{*}(s),[g^{\varepsilon}(t,u^{\varepsilon}(s))-g^{*}(u^{*}(s))]\text{d}W(s) \right \rangle _{\mathcal{H} ^{1} }, \nonumber
	\end{align}
where
\begin{align}\label{112}
		 &\int_{0}^{t}\left \langle u^{\varepsilon}(s)-u^{*}(s),S_{1}(u^{\varepsilon}(s) )- S_{1}(u^{*}(s) ) \right \rangle _{\mathcal{H} ^{1}}\text{d}s\nonumber
\\&=\int_{0}^{t}\left \langle (I-\Delta )(u^{\varepsilon}(s)-u^{*}(s)),S_{1}(u^{\varepsilon}(s) )- S_{1}(u^{*}(s) ) \right \rangle _{\mathcal{H} ^{0}}\text{d}s\nonumber
\\&=\int_{0}^{t}\left \langle (I-\Delta )(u^{\varepsilon}(s)-u^{*}(s)),(u^{\varepsilon}(s) - u^{*}(s) ) \right \rangle _{\mathcal{H} ^{0}}\text{d}s
\\&~~~-\int_{0}^{t}\left \langle (I-\Delta )(u^{\varepsilon}(s)-u^{*}(s)),(I-\Delta )(u^{\varepsilon}(s) - u^{*}(s) ) \right \rangle _{\mathcal{H} ^{0}}\text{d}s\nonumber
\\&\le 2\int_{0}^{t}\left \|u^{\varepsilon}(s) - u^{*}(s) ) \right \| ^{2}_{\mathcal{H} ^{1}}\text{d}s-\int_{0}^{t}\left \|u^{\varepsilon}(s) - u^{*}(s) ) \right \| ^{2}_{\mathcal{H} ^{2}}\text{d}s,\nonumber
	\end{align}
and according to (\cite{ref27}, (3.31)–(3.32)) and Young's inequality,
\begin{align}\label{113}
		 &-\int_{0}^{t}   \left \langle u^{\varepsilon}(s)-u^{*}(s),S_{2}(u^{\varepsilon}(s) )- S_{2}(u^{*}(s) ) \right \rangle _{\mathcal{H} ^{1}}\text{d}s\nonumber
\\&\le \int_{0}^{t}   \left \|  u^{\varepsilon}(s)-u^{*}(s)\right \| _{\mathcal{H} ^{2}} \left \| S^{1}_{2}(u^{\varepsilon}(s) )- S^{1}_{2}(u^{*}(s) )\right \| _{\mathcal{H} ^{0}}\text{d}s\nonumber
\\&~~~+\int_{0}^{t}   \left \|  u^{\varepsilon}(s)-u^{*}(s)\right \| _{\mathcal{H} ^{2}} \left \| \Psi _{N}(\left | u^{\varepsilon}(s) \right |^{2} ) u^{\varepsilon}(s)- \Psi _{N}(\left | u^{*}(s) \right |^{2} ) u^{*}(s)\right \| _{\mathcal{H} ^{0}}\text{d}s\nonumber
\\&\le C\int_{0}^{t}   \left \|  u^{\varepsilon}(s)-u^{*}(s)\right \| _{\mathcal{H} ^{2}} [\left \|  u^{\varepsilon}(s)-u^{*}(s)\right \|^{\frac{1}{2} } _{\mathcal{H} ^{2}}\left \|  u^{\varepsilon}(s)-u^{*}(s)\right \|^{\frac{1}{2} } _{\mathcal{H} ^{1}}\left \|  u^{\varepsilon}(s)\right \| _{\mathcal{H} ^{1}}
\\&~~~+\left \|  u^{\varepsilon}(s)-u^{*}(s)\right \| _{\mathcal{H} ^{1}}\left \|  u^{*}(s)\right \|^{\frac{1}{2} } _{\mathcal{H} ^{2}}\left \|  u^{*}(s)\right \|^{\frac{1}{2} } _{\mathcal{H} ^{1}}]\text{d}s\nonumber
\\&~~~+C\int_{0}^{t}   \left \|  u^{\varepsilon}(s)-u^{*}(s)\right \| _{\mathcal{H} ^{2}}  \left \|  u^{\varepsilon}(s)-u^{*}(s)\right \| _{\mathcal{H} ^{1}} ( \left \|  u^{\varepsilon}(s)\right \|^{2} _{\mathcal{H} ^{1}} + \left \|  u^{*}(s)\right \|^{2} _{\mathcal{H} ^{1}} )\text{d}s\nonumber
\\&\le \int_{0}^{t}\left \|u^{\varepsilon}(s) - u^{*}(s) ) \right \| ^{2}_{\mathcal{H} ^{2}}\text{d}s+ C\int_{0}^{t}\left \|u^{\varepsilon}(s) - u^{*}(s) ) \right \| ^{2}_{\mathcal{H} ^{1}}[\left \|u^{\varepsilon}(s) \right \| ^{4}_{\mathcal{H} ^{1}}+\left \|u^{*}(s) \right \| ^{2}_{\mathcal{H} ^{1}}+\left \|u^{*}(s) \right \| ^{4}_{\mathcal{H} ^{1}}\nonumber
\\&~~~+\left \|u^{*}(s) \right \| ^{2}_{\mathcal{H} ^{2}}]\text{d}s.\nonumber
	\end{align}
By applying Gronwall's lemma and taking the expectation, we derive
\begin{align}\label{115}
		 & \mathbb{E} \sup_{t\in [0,T\wedge \tau _{M}]} \left \| u^{\varepsilon}(t;u^{\varepsilon}_{0} )-u^{*}(t;u^{*}_{0}) \right \| _{\mathcal{H} ^{1}}^{2} \nonumber
\\&\le  \mathbb{E} \{\text{exp}[\int_{0}^{T}(1+ \left \|u^{\varepsilon}(s) \right \| ^{4}_{\mathcal{H} ^{1}}+\left \|u^{*}(s) \right \| ^{2}_{\mathcal{H} ^{1}}+\left \|u^{*}(s) \right \| ^{4}_{\mathcal{H} ^{1}}
+\left \|u^{*}(s) \right \| ^{2}_{\mathcal{H} ^{2}})\text{d}s]\cdot [\left \| u^{\varepsilon}_{0}-u^{*}_{0} \right \| _{\mathcal{H} ^{1}}^{2}\nonumber
\\&~~~+\int_{0}^{T} \left \| g^{\varepsilon}(s,u^{\varepsilon}(s))-g^{*}(u^{*}(s) \right \|_{\mathscr{L}(K,\mathcal{H} ^{1})} ^{2})\text{d}s\nonumber
\\&~~~+2\sup_{t\in [0,T\wedge \tau _{M}]}\left |\int_{0}^{t}  \left \langle u^{\varepsilon}(s)-u^{*}(s),f^{\varepsilon}(s,u^{\varepsilon}(s))-f^{*}(u^{*}(s)) \right \rangle _{\mathcal{H} ^{1}}\text{d}s\right | \nonumber
\\&~~~+2\sup_{t\in [0,T\wedge \tau _{M}]}\left |\int_{0}^{t}\left \langle u^{\varepsilon}(s)-u^{*}(s),[g^{\varepsilon}(t,u^{\varepsilon}(s))-g^{*}(u^{*}(s))]\text{d}W(s) \right \rangle _{\mathcal{H} ^{1} }\right |]\}
\\&\le C_{M,T}\{\left \| u^{\varepsilon}_{0}-u^{*}_{0} \right \| _{\mathcal{H} ^{1}}^{2}+\mathbb{E}\int_{0}^{T} +\left \| g^{\varepsilon}(s,u^{\varepsilon}(s))-g^{*}(u^{*}(s) \right \|_{\mathscr{L}(K,\mathcal{H} ^{1})} ^{2})\text{d}s\nonumber
\\&~~~+2\mathbb{E}\sup_{t\in [0,T\wedge \tau _{M}]}\left |\int_{0}^{t}  \left \langle u^{\varepsilon}(s)-u^{*}(s),f^{\varepsilon}(s,u^{\varepsilon}(s))-f^{*}(u^{*}(s)) \right \rangle _{\mathcal{H} ^{1}}\text{d}s\right | \nonumber
\\&~~~+2\mathbb{E}\sup_{t\in [0,T\wedge \tau _{M}]}\left |\int_{0}^{t}\left \langle u^{\varepsilon}(s)-u^{*}(s),[g^{\varepsilon}(t,u^{\varepsilon}(s))-g^{*}(u^{*}(s))]\text{d}W(s) \right \rangle _{\mathcal{H} ^{1} }\right |\}.\nonumber
	\end{align}
Following the derivation of \eqref{t21} and tilizing the B-D-G inequality, Young's inequality, we obtain
\begin{align*}
		&C_{M,T}\mathbb{E}\sup_{t\in [0,T\wedge \tau _{M}]}\left |\int_{0}^{t}\left \langle u^{\varepsilon}(s)-u^{*}(s),[g^{\varepsilon}(t,u^{\varepsilon}(s))-g^{*}(u^{*}(s))]\text{d}W(s) \right \rangle _{\mathcal{H} ^{1} }\right |
\\&\le C_{M,T}\mathbb{E}\left (  \int_{0}^{T\wedge \tau _{M}}\left \| u^{\varepsilon}(s)-u^{*}(s) \right \|^{2} _{\mathcal{H} ^{1} }\left \|g^{\varepsilon}(t,u^{\varepsilon}(s))-g^{*}(u^{*}(s)) \right \|^{2} _{\mathscr{L}(K,\mathcal{H} ^{1})}\text{d}s\right ) ^{\frac{1}{2} }
\\&\le \mathbb{E}[( \sup_{t\in [0,T\wedge \tau _{M}]} \left \| u^{\varepsilon}(t )-u^{*}(t) \right \| _{\mathcal{H} ^{1}}^{2})^{\frac{1}{2} }\left (C^{2}_{M,T}  \int_{0}^{T\wedge \tau _{M}}\left \|g^{\varepsilon}(t,u^{\varepsilon}(s))-g^{*}(u^{*}(s)) \right \|^{2} _{\mathscr{L}(K,\mathcal{H} ^{1})}\text{d}s\right ) ^{\frac{1}{2} }]
\\&\le \frac{1}{2} \mathbb{E}\sup_{t\in [0,T\wedge \tau _{M}]} \left \| u^{\varepsilon}(t )-u^{*}(t) \right \| _{\mathcal{H} ^{1}}^{2} +\frac{C^{2}_{M,T}}{2}\mathbb{E}  \int_{0}^{T}\left \|g^{\varepsilon}(t,u^{\varepsilon}(s))-g^{*}(u^{*}(s)) \right \|^{2} _{\mathscr{L}(K,\mathcal{H} ^{1})}\text{d}s,
	\end{align*}
which implies that
\begin{align}\label{116}
		 & \mathbb{E} \sup_{t\in [0,T\wedge \tau _{M}]} \left \| u^{\varepsilon}(t )-u^{*}(t) \right \| _{\mathcal{H} ^{1}}^{2} \nonumber
\\&\le C_{M,T}\{\left \| u^{\varepsilon}_{0}-u^{*}_{0} \right \| _{\mathcal{H} ^{1}}^{2}+\mathbb{E}\sup_{t\in [0,T\wedge \tau _{M}]}\left |\int_{0}^{t}  \left \langle u^{\varepsilon}(s)-u^{*}(s),f^{\varepsilon}(s,u^{\varepsilon}(s))-f^{*}(u^{*}(s)) \right \rangle _{\mathcal{H} ^{1}}\text{d}s\right |
\\&~~~+\mathbb{E}\int_{0}^{T} \left \| g^{\varepsilon}(s,u^{\varepsilon}(s))-g^{*}(u^{*}(s)) \right \|_{\mathscr{L}(K,\mathcal{H} ^{1})} ^{2}\text{d}s\}\nonumber
\\&:=C_{M,T}[\left \| u^{\varepsilon}_{0}-u^{*}_{0} \right \| _{\mathcal{H} ^{1}}^{2}+\mathcal{Z}_{1} +\mathcal{Z}_{2}]. \nonumber
	\end{align}
For $\mathcal{Z}_{1}$, analogous to the derivation presented in \eqref{a8}-\eqref{a17}, we can derive the following from \textbf{(H5')} and Lemma \ref{le1}:
\begin{align*}
	\mathcal{Z}_{1}&=\mathbb{E}\underset{t\in [0,T\wedge \tau _{M}]}{\sup}\left | \int_{0}^{t}\left \langle u^{\varepsilon}(s)-u^{*}(s),f^{\varepsilon}(t,u^{\varepsilon}(s))-f^{*}(t,u^{*}(s)) \right \rangle _{\mathcal{H} ^{1}}\text{d}s\right | \nonumber
\\&\le c\int_{0}^{T\wedge \tau _{M}}\mathcal{A} (\mathbb{E}\left \| u^{\varepsilon }(s) -u^{* }(s) \right \|^{2 }_{\mathcal{H} ^{1}})\text{d}s+C\sqrt{\mathcal{A} ((\left \| u_{0} ^{* } \right \|^{2}_{\mathcal{H} ^{1}}+\left \| u_{0} ^{* } \right \|^{6}_{\mathcal{H} ^{1}}+\left \| u_{0} ^{\varepsilon  } \right \|^{2}_{\mathcal{H} ^{1}}+1 )d^{\frac{1 }{2} })}
\\&~~~+C_{T}(\left \| u_{0} ^{\varepsilon } \right \|^{2}_{\mathcal{H} ^{1}}+\left \| u_{0} ^{\varepsilon } \right \|^{6}_{\mathcal{H} ^{1}}+\left \| u_{0} ^{* } \right \|^{2}_{\mathcal{H} ^{1}}+\left \| u_{0} ^{* } \right \|^{6}_{\mathcal{H} ^{1}}+1 )(\mathcal{R} _{3}(\frac{d}{\varepsilon } )+d+d^{\frac{1}{4} }).\nonumber
	\end{align*}
For $\mathcal{Z}_{2}$, analogous to the derivation presented in \eqref{s52}-\eqref{s58}, we can obtain the following from \textbf{(H5')} and  Lemma \ref{le1}:
\begin{align*}
	\mathcal{Z}_{2}&\le c\int_{0}^{T\wedge \tau _{M}}\mathcal{A} (\mathbb{E}\left \| u^{\varepsilon }(s) -u^{* }(s) \right \|^{2   }_{\mathcal{H} ^{1}})\text{d}s+C_{T}[\mathcal{A} ((\left \| u_{0} ^{* } \right \|^{2}_{\mathcal{H} ^{1}}+\left \| u_{0} ^{* } \right \|^{6}_{\mathcal{H} ^{1}}+1 )d^{\frac{1 }{2} })
\\&~~~+(\left \| u_{0} ^{* } \right \|^{2}_{\mathcal{H} ^{1}}+\left \| u_{0} ^{* } \right \|^{6}_{\mathcal{H} ^{1}}+1 )(d+\mathcal{R}  _{6}(\frac{d}{\varepsilon } ))].
	\end{align*}
Consequently, we derive
\begin{align*}
		 & \mathbb{E} (\underset{t\in[0,T]}{\sup}\left \| u^{\varepsilon}(t\wedge \tau _{M};u^{\varepsilon}_{0} )-u^{*}(t\wedge \tau _{M};u^{*}_{0}) \right \| _{\mathcal{H} ^{1}}^{2}) \nonumber
 \\&\le C_{M,T}[\left \| u^{\varepsilon}_{0}-u^{*}_{0} \right \| _{\mathcal{H} ^{1}}^{2}+C_{a_{1},a_{4},R,c}\int_{0}^{T}\mathcal{A} (\mathbb{E}\left \| u^{\varepsilon}(s\wedge \tau _{M})-u^{*}(s\wedge \tau _{M}) \right \| ^{2}_{\mathcal{H} ^{1}})\text{d}s
 \\&~~~+C_{T}[\sqrt{\mathcal{A} ((\left \| u_{0} ^{* } \right \|^{2}_{\mathcal{H} ^{1}}+\left \| u_{0} ^{* } \right \|^{6}_{\mathcal{H} ^{1}}+\left \| u_{0} ^{\varepsilon  } \right \|^{2}_{\mathcal{H} ^{1}}+1 )d^{\frac{1 }{2} })}+\mathcal{A} ((\left \| u_{0} ^{* } \right \|^{2}_{\mathcal{H} ^{1}}+\left \| u_{0} ^{* } \right \|^{6}_{\mathcal{H} ^{1}}+1 )d^{\frac{1 }{2} })]
 \\&~~~+C_{T}(\left \| u_{0}^{\varepsilon} \right \|_{\mathcal{H}^{1} }^{2}+\left \| u_{0}^{\varepsilon} \right \|_{\mathcal{H}^{1} }^{6}+\left \| u_{0} ^{* } \right \|^{2}_{\mathcal{H} ^{1}}+\left \| u_{0} ^{* } \right \|^{4}_{\mathcal{H} ^{1}}+\left \| u_{0} ^{* } \right \|^{6}_{\mathcal{H} ^{1}}+1 )(\sum_{i=1}^{5} \mathcal{R} _{i}(\frac{d}{\varepsilon } )+d+d^{\frac{1}{4} })].\nonumber
	\end{align*}
By employing a similar argument as presented in \eqref{c222}-\eqref{c999}, we obtain
\begin{align*}
\lim_{\varepsilon  \to 0} \mathbb{E}\underset{t\in [0,T]}{\sup}   \left \|u^{\varepsilon }(t;u_{0}^{\varepsilon }) -u^{* }(t;u_{0} ^{*}) \right \|^{2}_{\mathcal{H} ^{1}}=0.
\end{align*}
This completes the proof.
\end{proof}
\section*{Appendix \uppercase\expandafter{\romannumeral1}: The specific proof of  weak solutions(the step 1 of Theorem 3.1):}
\textbf{\emph{Proof:}} Let $\chi_{m}$ be a strong solution of the following equation:
\begin{align*}
\begin{cases}
 \text{d}\chi(t)=\mathcal{F}_{m}^{\mathcal{M} }(t,\chi)\text{d}t+\mathcal{G}_{m}^{\mathcal{M} }(t,\chi)\text{d}W^{k}(t), \\
\chi(0)=u^{k}_{0}.
\end{cases}
\end{align*}
For any fixed \( p\ge 2 \), by applying It$\hat{\text{o}} $ formula to $\left \| \chi_{m}(t) \right \| _{\mathcal{H}^{1}_{k}}^{p}$, we derive
\begin{align*}
\begin{split}
\left \| \chi_{m}(t) \right \| _{\mathcal{H}^{1}_{k}}^{p}&=\left \| u^{k}_{0} \right \| _{\mathcal{H}^{1}_{k}}^{p}+\int_{0}^{t} [ p\left \| \chi_{m}(s) \right \| _{\mathcal{H}^{1}_{k}}^{p-2}\left \langle \mathcal{F}^{\mathcal{M} }_{m} (s,\chi_{m}(s)),\chi_{m}(s)  \right \rangle_{\mathcal{H}^{1}_{k}} \\&~~~+\frac{p(p-1)}{2} \left \| \chi_{m}(s) \right \| _{\mathcal{H}^{1}_{k}}^{p-2} \left \| \mathcal{G}^{\mathcal{M} }_{m} (s,\chi_{m}(s)) \right \|_{\mathscr{L}(K^{k},\mathcal{H}^{1}_{k})}^{2} ]\text{d}s
\\&~~~+\int_{0}^{t}p\left \| \chi_{m}(s) \right \| _{\mathcal{H}^{1}_{k}}^{p-2} ( \chi_{m}(s))^{T} \mathcal{G}^{\mathcal{M} }_{m} (s,\chi_{m}(s))\text{d}W^{k}(s).
\end{split}
\end{align*}
By applying Young’s inequality in conjunction with \eqref{t6}, we obtain
\begin{align*}
\begin{split}
\left \| \chi_{m}(t) \right \| _{\mathcal{H}^{1}_{k}}^{p}&\le\left \| u^{k}_{0} \right \| _{\mathcal{H}^{1}_{k}}^{p}+C_{k,T,\nu,M,p}\int_{0}^{t} (\left \| \chi_{m}(s) \right \| _{\mathcal{H}^{1}_{k}}^{p}+1)\text{d}s
\\&~~~+C_{k,T,\nu,M,p}\int_{0}^{t}(\left \| \chi_{m}(s) \right \| _{\mathcal{H}^{1}_{k}}^{p}+1)\text{d}W^{k}(s).
\end{split}
\end{align*}
By applying Cauchy's inequality and the Burkholder-Davis-Gundy (B-D-G) inequality, we have
\begin{align*}
\begin{split}
&\mathbb{E} \underset{r\in[0,t]}{\sup}  \left \| \chi_{m}(r) \right \| _{\mathcal{H}^{1}_{k}}^{2p}\\&\le C_{p} \left \| u^{k}_{0} \right \| _{\mathcal{H}^{1}_{k}}^{2p}+C_{p,M}\mathbb{E}[\int_{0}^{t} (\underset{r\in[0,s]}{\sup}\left \| \chi_{m}(r) \right \| _{\mathcal{H}^{1}_{k}}^{p}+1)\text{d}s]^{2}
+C_{p,M}\mathbb{E}(\underset{r\in[0,t]}{\sup}\int_{0}^{r}(\left \| \chi_{m}(s) \right \| _{\mathcal{H}^{1}_{k}}^{p}+1)\text{d}W^{k}(s)) ^{2}
\\&\le C_{p} \left \| u^{k}_{0} \right \| _{\mathcal{H}^{1}_{k}}^{2p}+C_{p,M}\mathbb{E}\int_{0}^{t} (\underset{r\in[0,s]}{\sup}\left \| \chi_{m}(r) \right \| _{\mathcal{H}^{1}_{k}}^{2p}+1)\text{d}s
+C_{p,M}\int_{0}^{t} [\mathbb{E}\left \| \chi_{m}(r) \right \| _{\mathcal{H}^{1}_{k}}^{2p}+1]\text{d}s
\\&\le C_{p} \left \| u^{k}_{0} \right \| _{\mathcal{H}^{1}_{k}}^{2p}+C_{p,M}[\int_{0}^{t} \mathbb{E}\underset{r\in[0,s]}{\sup}  \left \| \chi_{m}(r) \right \| _{\mathcal{H}^{1}_{k}}^{2p}\text{d}s+t].
\end{split}
\end{align*}
The application of the Gronwall inequality subsequently yields
\begin{align}\label{r6}
\begin{split}
\mathbb{E} \underset{r\in[0,t]}{\sup}  \left \| \chi_{m}(r) \right \| _{\mathcal{H}^{1}_{k}}^{2p}\le C_{q,M}(\left \| u^{k}_{0} \right \| _{\mathcal{H}^{1}_{k}}^{2p}+t+e^{t})<\infty,
\end{split}
	\end{align}
for any \( t \in [0,T] \) and \( T > 0 \). Furthermore, we deduce from \eqref{t6} that $\mathcal{F}^{N}_{m} (s,\chi_{m}(s))$ and $\mathcal{G}^{N}_{m} (s,\chi_{m}(s))$ are bounded on $\mathcal{H}^{1}_{k}$. That is, there exists a constant \( M_{N,T} \), independent of \( m \), such that
\begin{align*}
\left \| \mathcal{F}^{\mathcal{M} }_{m} (t,\chi) \right \|_{\mathcal{H}^{1}_{k}}\vee   \left \| \mathcal{G}^{\mathcal{M} }_{m} (t,\chi)\right \|_{\mathscr{L}(K^{k},\mathcal{H}^{1}_{k})}\le M_{T,\mathcal{M} }.
\end{align*}
Consequently, for any $0\le r, t \le T < \infty  $, we have
\begin{align}\label{r7}
\begin{split}
&\underset{m\ge1}{\sup}\mathbb{E} \underset{r\in[0,t]}{\sup}  \left \| \chi_{m}(t)-\chi_{m}(r) \right \| _{\mathcal{H}^{1}_{k}}^{2p}\\&\le C_{p}\underset{m\ge1}{\sup}\mathbb{E} \left | \int_{r}^{t} \mathcal{F}^{\mathcal{M} }_{m} (s,\chi_{m}(s))\text{d}s \right | ^{2p}
+C_{p}\mathbb{E} \underset{m\ge1}{\sup}\left | \int_{r}^{t} \mathcal{G}^{\mathcal{M} }_{m} (s,\chi_{m}(s))\text{d}W^{k}(s) \right | ^{2p}
\\&\le C_{\mathcal{M} ,q,T}\left | t-r \right | ^{p}.
\end{split}
	\end{align}
This implies that the family of laws of $\chi_{m}(t)$ is weakly compact. Thus there exists a stochastic process $\chi(t)$ such that the law of $\chi_{m}(t)$ weakly converges to the law of $\chi(t)$. Note that $\chi_{m}(t)$ with the initial condition $\chi_{m}(0) =u_{0}^{k}$ is the unique strong solution of  \eqref{aaaaaa1}, which implies that
\begin{align*}
\mathfrak{M} _{m}(t)=\chi_{m}(t)-u_{0}^{k}-\int_{0}^{t} \mathcal{F}^{\mathcal{M} }_{m} (s,\chi_{m}(s))\text{d}s
\end{align*}
is a martingale with the covariance  given by $
\sum_{i=1}^{\mathcal{M}} \int_{0}^{t} [\mathcal{G}^{\mathcal{M} }_{m} (s,\chi_{m}(s))]_{ik}[\mathcal{G}^{\mathcal{M} }_{m} (s,\chi_{m}(s))]_{jk}\text{d}s$, for any $1\le i,j\le n$.
Then we define the following  coordinate process:
\begin{align*}
\chi^{*}(t)\omega =\omega (t),
\end{align*}
where $\omega\in C(\mathbb{R}^{+} ,\mathbb{R}^{n} )$, and let $\mathcal{D} _{t}=\sigma \{\omega (s); 0\le s \le t\}$, hence $x^{*}(t)\omega$ is $\mathcal{D} _{t}$-adapted. Next let
\begin{align*}
\mathfrak{M} _{*,m}(t)=\chi^{*}(t)-u_{0}^{k}-\int_{0}^{t} \mathcal{F}^{\mathcal{M} }_{m} (s,\chi^{*}(s))\text{d}s,
\end{align*}
then $\mathfrak{M} _{*,m}(t)$ is a martingale relative to $(\mathcal{L}_{\chi_{m}(t)},\mathcal{D} _{t})$ with the covariance
\begin{align*}
\left \langle \mathfrak{M} _{*,m}^{i},\mathfrak{M} _{*,m}^{j} \right \rangle(t) =\sum_{i=1}^{\mathcal{M}} \int_{0}^{t} [\mathcal{G}^{\mathcal{M} }_{m} (s,\chi^{*}(s))]_{ik}[\mathcal{G}^{\mathcal{M} }_{m} (s,\chi^{*}(s))]_{jk}\text{d}s.
\end{align*}
Further by  \eqref{r6}, let $m\to \infty $ and we obtain
\begin{align*}
\mathfrak{M} _{*}(t)=\chi^{*}(t)-u_{0}^{k}-\int_{0}^{t} \mathcal{F}^{\mathcal{M} } (s,\chi^{*}(s))\text{d}s.
\end{align*}
Then for any $t>s$ and $\Gamma \in \mathcal{D}_{t} $, by  Problem 2.4.12 of \cite{ref66}, we have
\begin{align*}
\int _{\mathbb{R}^{n} }\mathcal{X}_{\Gamma }\mathfrak{M} _{*}(t)\text{d}\mathcal{L} _{\chi^{*}(t)}&=\lim_{n \to \infty} \int _{\mathbb{R}^{n} }\mathcal{X}_{\Gamma }\mathfrak{M} _{*,m}(t)\text{d}\mathcal{L} _{\chi_{m}(t)}\\&=\lim_{n \to \infty} \int _{\mathbb{R}^{n} }\mathcal{X}_{\Gamma }\mathfrak{M} _{*,m}(s)\text{d}\mathcal{L} _{\chi_{m}(s)}\\&=\int _{\mathbb{R}^{n} }\mathcal{X}_{\Gamma }\mathfrak{M} _{*}(s)\text{d}\mathcal{L} _{\chi^{*}(s)},
\end{align*}
where $\mathcal{X}_{\Gamma }$ represents the indicator function of $\Gamma$. This implies that $\mathfrak{M} _{*}(t)$ is a $\mathcal{L} _{\chi^{*}(s)}$-martingale. In addition, by  \eqref{r6}, we get
\begin{align*}
\left \langle \mathfrak{M} _{*}^{i},\mathfrak{M} _{*}^{j} \right \rangle(t) =\sum_{i=1}^{\mathcal{M}} \int_{0}^{t} [\mathcal{G}^{\mathcal{M} } (s,\chi^{*}(s))]_{ik}[\mathcal{G}^{\mathcal{M} } (s,\chi^{*}(s))]_{jk}\text{d}s.
\end{align*}
for $1\le i,j\le n$. Based on Theorem II.7.1' of \cite{ref67}, it can be deduced that there exists an $m$-dimensional Brownian motion $B^{*}(t)$ on an extended probability space $(C(\mathbb{R}^{+} ,\mathbb{R}^{n} ),\mathcal{D} _{t},\mathcal{L}_{\chi^{*}(s)}))$ such that $
\mathfrak{M} _{*}(t)=\int_{0}^{t} \mathcal{G}^{\mathcal{M} } (s,\chi^{*}(s))\text{d}B^{*}(s)$,
i.e.,
\begin{align*}
\chi^{*}(t)=u^{k}_{0}+\int_{0}^{t} \mathcal{F}^{\mathcal{M} } (s,\chi^{*}(s))\text{d}s+\int_{0}^{t} \mathcal{G}^{\mathcal{M} } (s,\chi^{*}(s))\text{d}B^{*}(s),
\end{align*}
hence $\chi^{*}(t)$ is  a weak solution to \eqref{t10}. The proof is complete.\quad $\Box$


\section*{Acknowledgments}
 The first author (S. Lu)  supported by Graduate Innovation Fund of Jilin University. The second author (X. Yang) was supported by  National Natural Science Foundation of China (12071175, 12371191). The third author (Y. Li) was supported by  National Natural Science Foundation of China (12071175 and 12471183).
\section*{Data availability}
No data was used for the research described in the article.

\section*{References}
\bibliographystyle{plain}
\bibliography{ref}

\end{document}